\newcommand{\sqleft}[1]{\mathrel{_{#1}{\sqsubseteq}}}
\newcommand{\MJ}[1]{{#1}_{\scriptscriptstyle\top}}
\definecolor{fxtarget}{HTML}{007ACC}
\newtheorem{theorem}{Theorem}[]
\newtheorem{proposition}[theorem]{Proposition}
\newtheorem{corollary}[theorem]{Corollary}
\newtheorem{lemma}[theorem]{Lemma}
\newtheorem{claim}{Claim}[theorem]
\theoremstyle{definition}
\newtheorem{question}{Question}
\newtheorem{ditorproblem}{Ditor's Problem}
\newtheorem{definition}[theorem]{Definition}
\newtheorem*{conjecture*}{Conjecture}
\theoremstyle{remark}
\newtheorem{remark}[theorem]{Remark}
\begin{document}

\title{Ladders and squares}
\author{Lorenzo Notaro}
\address{University of Vienna, Institute of Mathematics, Kurt G\"{o}del Research Center, Kolingasse 14-16, 1090 Vienna, Austria}
\curraddr{}
\thanks{The author would like to acknowledge INDAM for the financial support. This research was partially supported by the project PRIN 2022 “Models, sets and classifications”, prot. 2022TECZJA. This research was funded in whole or in part by the Austrian Science Fund (FWF) \href{https://www.fwf.ac.at/en/research-radar/10.55776/ESP1829225}{10.55776/ESP1829225}. For open access purposes, the author has applied a CC BY public copyright license to any author accepted manuscript version arising from this submission.}
\email{lorenzo.notaro@univie.ac.at}
\subjclass[2020]{Primary 03E05, Secondary 03E35, 06A07}
\keywords{lattice, breadth, lower cover, ladder, semilattice, square principle}

\begin{abstract}
In 1984, Ditor asked two questions:\vspace{0.3em}
\begin{enumerate}
\itemsep0.3em
\item[(A)] For each $n\in\omega$ and infinite cardinal $\kappa$, is there a join-semilattice of breadth $n+1$ and cardinality $\kappa^{+n}$ whose principal ideals have cardinality $< \kappa$?
\item[(B)] For each $n \in \omega$, is there a lower finite lattice of cardinality $\aleph_{n}$ whose elements have at most $n+1$ lower covers?\vspace{0.3em}
\end{enumerate}
We show that both questions have positive answers under the axiom of constructibility, and hence consistently with $\mathsf{ZFC}$. More specifically, we derive the positive answers from assuming that  $\square_\kappa$ holds for enough $\kappa$'s.
\end{abstract}
\maketitle

\section{Introduction}

For a positive integer $n$, an \emph{$n$-ladder} is a lattice whose principal ideals are finite (i.e. a lower finite lattice) and whose elements have at most $n$ lower covers. This family of lattices was originally isolated by Ditor~\cite{MR0732199} and Dobbertin~\cite{MR862871} under different names\footnote{Ditor called these lattices \emph{$n$-lattices} while Dobbertin called them \emph{$n$-frames}. We follow the terminology of Gr\"{a}tzer, Lakser, and Wehrung \cite{MR1768850} and refer the reader to \cite[p. 387]{MR2926318} for a brief history of the name \emph{$n$-ladder}.}. All results due to Ditor that we cite are from \cite{MR0732199}.

Ditor showed that every $n$-ladder has breadth at most $n$, and proved that every join-semilattice of breadth at most $n+1$ whose principal ideals have cardinality $< \kappa$, for some infinite cardinal $\kappa$, has cardinality at most $\kappa^{+n}$ (see Theorem~\ref{thm:Ditor}). Hence, in particular, he proved that every $(n+1)$-ladder, for some $n\in\omega$, has cardinality at most $\aleph_{n}$. He then \fxnote*{}{stated} the question of whether his cardinality bounds are sharp (see also \cite[p. 291]{MR2768581}):\vspace{0.5em}
\textit{
\begin{enumerate}[label={\upshape (\Alph*)}]
\itemsep0.5em
\item\label{questionA} For each $n\in\omega$ and infinite cardinal $\kappa$, is there a join-semilattice of breadth $n+1$ and cardinality $\kappa^{+n}$ whose principal ideals have cardinality $< \kappa$?
\item\label{questionB} For each $n \in \omega$, is there an $(n+1)$-ladder of cardinality $\aleph_n$?\vspace{0.5em}
\end{enumerate}
}

Note that \ref{questionB} can be regarded as a more demanding version of \ref{questionA} \fxnote*{}{in case} $\kappa = \aleph_0$. Indeed, every $(n+1)$-ladder is, in particular, a lower finite join-semilattice of breadth at most $n+1$, but not every lower finite lattice of breadth at most $n+1$ is an $(n+1)$-ladder---e.g. the diamond lattice $\mathsf{M}_3$ has breadth $2$, but it is not a $2$-ladder. 

The case $n=0$ is trivial for both \ref{questionA} and \ref{questionB}. Ditor answered both questions positively in case $n=1$ and $\kappa$ is regular. In particular, he proved the existence of $2$-ladders of cardinality $\aleph_1$. \fxnote*{}{Since then, $2$-ladders have been used primarily in representation problems in universal algebra, particularly for structures of cardinality $\leq \aleph_1$ (e.g. \cite{MR862871,MR1768850, MR1800815, MR2309879}). There are exceptions, however: for instance, in \cite{MR3286148}, $2$-ladders are employed in the description of join-irreducible elements in certain lattices of regular closed sets. By contrast, no significant applications of $n$-ladders with $n \geq 3$ are known, due to category-theoretical obstacles \cite{MR2926318}.

Given his results, Ditor isolated the two simplest instances of \ref{questionA} and \ref{questionB} that were left open in his paper:
\begin{ditorproblem}\label{ditprob1}
Is there a $3$-ladder of cardinality $\aleph_2$?
\end{ditorproblem}
\begin{ditorproblem}\label{ditprob2}
Is there a join-semilattice of breadth $2$ and cardinality $\aleph_{\omega+1}$ whose principal ideals have cardinality $< \aleph_\omega$?
\end{ditorproblem}}

 Recently, progress was made in addressing \fxnote*{}{Ditor's Problems and the remaining cases of Ditor's questions \ref{questionA} and \ref{questionB}}. Wehrung \cite{MR2609217} provided a positive answer in $\mathsf{ZFC}$ to question~\ref{questionA} when $\kappa$ is uncountable regular. Moreover, under some additional set-theoretic assumptions, he constructed a $3$-ladder of cardinality $\aleph_2$---thus, giving a consistent positive answer to \fxnote*{}{Problem~\ref{ditprob1} or, in other words, to question~\ref{questionB} when $n = 2$}. These set-theoretic assumptions, both independent of $\mathsf{ZFC}$, consist of either $\mathsf{MA}(\aleph_1; \text{precaliber } \aleph_1)$---i.e. a weakening of Martin's axiom $\mathsf{MA}(\aleph_1)$---or the existence of an $(\omega_1, 1)$-morass. Notably, since the non-existence of $(\omega_1,1)$-morasses implies that $\omega_2$ is inaccessible in $\mathbf{L}$ \cite{MR750828}, it follows from Wehrung's result that if there are no $3$-ladders of size $\aleph_2$, then $\omega_2$ is inaccessible in $\mathbf{L}$. Let us emphasize that we do not yet know whether the existence of a $3$-ladder of cardinality $\aleph_2$ is a theorem of $\mathsf{ZFC}$ (see Section~\ref{sec:conclusions} for more open questions).

We introduce a generalization of the notion of $n$-ladder that encompasses join-semilattices that are not lower finite, as to deal with both of Ditor's questions simultaneously. In particular, we introduce the concepts of $(n, \kappa)$-semiladder and $(n, \kappa)$-ladder, where $\kappa$ is an infinite cardinal and $n\in\omega$: an $(n, \kappa)$-semiladder (resp. $(n, \kappa)$-ladder) is a join-semilattice (resp. lattice) whose principal ideals have cardinality $< \kappa$, and that satisfies a property similar to ``every element has at most $n$ lower covers'' (see Definition~\ref{def:generalladder}). The notion of $(n, \aleph_0)$-ladder coincides with the standard notion of $n$-ladder, and each $(n, \kappa)$-semiladder has breadth at most $n$ (Lemma~\ref{lemma:breadth}).

Here are our main results: 

\begin{theorem}\label{thm:main1}
For every uncountable regular cardinal $\kappa$ and $n\in\omega$, there exists an $(n+1, \kappa)$-semiladder of cardinality $\kappa^{+n}$.
\end{theorem}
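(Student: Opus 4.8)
The plan is to argue by induction on $n$, building the semiladder one ``dimension'' at a time so that each step raises both the breadth parameter and the cardinality by one notch along the scale $\kappa < \kappa^{+} < \dots < \kappa^{+n}$. For the base case $n=0$ I would simply take the chain $(\kappa,\leq)$. Since $\kappa$ is a cardinal, every principal ideal ${\downarrow}\alpha = \alpha+1$ has cardinality $|\alpha| < \kappa$, and a chain vacuously satisfies the covering requirement of Definition~\ref{def:generalladder}, so $(\kappa,\leq)$ is a $(1,\kappa)$-semiladder of cardinality $\kappa$.

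For the inductive step, assume an $(n+1,\kappa)$-semiladder $S$ of cardinality $\lambda := \kappa^{+n}$ is given, and set $\mu := \kappa^{+(n+1)} = \lambda^{+}$, which is regular. I would produce an $(n+2,\kappa)$-semiladder of cardinality $\mu$ by amalgamating $\mu$-many copies of $S$ along the ordinal $\mu$. The crucial device is that, since $|\alpha| \leq \lambda$ for every $\alpha < \mu$, one may fix a system of injections $e_\alpha : \alpha \hookrightarrow S$ (viewing $S$ as a set of size $\lambda$). Each $e_\alpha$ lets copy $\alpha$ ``see'' every earlier copy $\beta < \alpha$ as a single point $e_\alpha(\beta)$ of $S$, so that an element living at level $\alpha$ records only a $<\kappa$-sized trace of what lies below it, rather than the whole size-$\lambda$ initial segment $\alpha$. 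This compression is exactly what keeps the principal ideals small: a naive coordinatewise product of the chain $\mu$ with $S$ would already have breadth $n+2$, but its ideals would have cardinality up to $\lambda$ (coming from the long chain $\mu$), whereas routing the chain through the injections $e_\alpha$ confines every ideal to cardinality $< \kappa \cdot \kappa = \kappa$ by regularity of $\kappa$.

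Concretely I would take the underlying set to be (a subset of) $\mu \times S$, declare $(\alpha,s) \leq (\beta,t)$ when either $\alpha = \beta$ and $s \leq_S t$, or $\alpha < \beta$ and the trace $e_\beta(\alpha) \in S$ of copy $\alpha$ inside copy $\beta$ lies below $t$ (together with a compatibility condition relating $s$ to $e_\beta(\alpha)$), and define the join levelwise. The $<\kappa$ bound on ideals and the covering condition of Definition~\ref{def:generalladder} would then be verified copy by copy. As for the breadth, the upper bound $n+2$ follows from Lemma~\ref{lemma:breadth} once the semiladder axioms are in place, and the matching lower bound is automatic: the cardinality is $\kappa^{+(n+1)}$, so Ditor's Theorem~\ref{thm:Ditor} forbids breadth $\leq n+1$, pinning the breadth at exactly $n+2$.

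The main obstacle I anticipate is not the breadth bookkeeping but the simultaneous verification that the amalgamated operation is a genuine, associative join and that it inherits the $(n+2,\kappa)$-semiladder axioms even though the injections $e_\alpha$ are completely arbitrary. The delicate point is at limit levels $\alpha < \mu$, where one must ensure that the downset of an element does not silently accumulate contributions from cofinally many lower copies and that the covering condition survives; this is precisely where the regularity of $\kappa$ is used, guaranteeing that a $<\kappa$ union of $<\kappa$-sized traces remains of size $<\kappa$. Arranging the coding to be coherent enough for this, \emph{without} appealing to any $\square$-type principle---which is exactly what the more rigid lower finite lattice case will instead require---is what makes the uncountable regular case a theorem of $\mathsf{ZFC}$.
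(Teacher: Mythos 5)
Your inductive step contains the gap, and it is exactly the gap the paper devotes Sections~\ref{sec:specialladder}--\ref{sec:thm2} to filling. Writing $\varrho(\alpha,\beta) \coloneqq e_\beta(\alpha)$, your order on $\mu \times S$ is the paper's relation $\trianglelefteq_\varrho$ from \eqref{eq:defspecialladder}, and by Proposition~\ref{prop:specialladder} this relation is a partial order if and only if $\varrho$ is transitive, and a join-semilattice if and only if $\varrho$ is moreover subadditive (Definition~\ref{def:transmap}): one needs $e_\delta(\alpha) \le e_\beta(\alpha) \vee e_\delta(\beta)$ and $e_\beta(\alpha) \le e_\delta(\alpha) \vee e_\delta(\beta)$ for all $\alpha < \beta < \delta$. ``Completely arbitrary'' injections $e_\alpha$ satisfy no such coherence, so your relation fails transitivity and your ``levelwise'' join is not even well posed---the join of elements at levels $\alpha < \beta$ must live at level $\beta$ and its existence is equivalent to subadditivity, not a bookkeeping matter. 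Constructing a transitive, subadditive $\varrho : [\lambda^+]^2 \rightarrow S$ with all the sets $D_\varrho(\alpha,p)$ of size $< \kappa$ is precisely what Theorem~\ref{thm:mainplus2} does, and it consumes $\square_{\kappa^{+m}}(\sqleft{\mathrm{cf}(\kappa)})$ at stage $m$; for regular $\kappa$ this hypothesis is vacuous only at the first step ($m = 0$, since $\square_\kappa(\sqleft{\kappa})$ is trivial), so your induction already stalls at $\kappa^{++}$. Your closing assertion that regularity of $\kappa$ lets one arrange the coding coherently ``without appealing to any $\square$-type principle'' is exactly the unproved content: if arbitrary (or easily built) trace systems sufficed, the square hypotheses of Theorems~\ref{thm:main2} and \ref{thm:main3} would be redundant even in their semiladder part. (Your subsidiary claims are also off: the ideal bound is $< \kappa$ by injectivity of $e_\beta$ alone, with no ``$\kappa \cdot \kappa$'' computation, and the appeal to Ditor's Theorem~\ref{thm:Ditor} to pin the breadth is unnecessary, since the theorem only asks for an $(n+1,\kappa)$-semiladder, i.e.\ $\MJ{(n+2)}$-freeness.)

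The paper's actual ZFC proof avoids growing the cardinality at all, which is why no coherence principle is needed. It fixes the cardinality at $\kappa^{+n}$ from the outset by taking $P_0 = \kappa^{+n}$, a $(1, \kappa^{+n})$-semiladder, and inducts on the \emph{other} parameter: Proposition~\ref{prop:construction} converts a well-founded $(i+1, \kappa^{+(n-i)})$-semiladder $P_i$ into a quasi-product of $P_i$ with the \emph{shorter} chain $\kappa^{+(n-i-1)}$ whose principal ideals have size $< \kappa^{+(n-i-1)}$, so the freeness parameter climbs (Proposition~\ref{prop:quasi-productfree}) while the ideal bound descends from $\kappa^{+n}$ to $\kappa$. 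The construction there is a well-founded recursion over $P_i$ itself, using lower coverings of size at most $n$ (Proposition~\ref{prop:equivfree}) and a filtration $J^i_\alpha$ built by closing off in $\omega$ stages---this is where uncountability and regularity of $\kappa$ genuinely enter. In short: the ZFC route trades the ideal bound \emph{down} at fixed cardinality, whereas your route trades the cardinality \emph{up} at fixed ideal bound, and the latter is the square-based machinery of Theorems~\ref{thm:mainplus2} and \ref{thm:mainplus3}, not a ZFC argument.
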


\begin{theorem}\label{thm:main2}
Let $\kappa$ be a singular cardinal and $n \in \omega$. If $\square_{\kappa^{+m}}(\sqleft{\mathrm{cf}(\kappa)})$ holds for every $m < n$, then there exists an $(n+1, \kappa)$-semiladder of cardinality $\kappa^{+n}$.
\end{theorem}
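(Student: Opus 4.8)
The plan is to argue by induction on $n$, at each stage consuming exactly one new instance of the hypothesis, namely $\square_{\kappa^{+n}}(\sqleft{\mathrm{cf}(\kappa)})$, in order to climb from cardinality $\kappa^{+n}$ up to $\kappa^{+(n+1)}$. For the base case $n=0$ the hypothesis is vacuous (there is no $m<0$), and one only needs a $(1,\kappa)$-semiladder of cardinality $\kappa^{+0}=\kappa$; the ordinal $\kappa$ under its usual order works, since the join is $\max$, every successor has a unique lower cover while every limit has none, and the principal ideal of $\alpha<\kappa$ has cardinality $|\alpha|+1<\kappa$. This plays the role that Theorem~\ref{thm:main1} plays in the regular case: where regularity of $\kappa$ there supplies the needed combinatorics for free, here the square principles are meant to substitute for it at a singular $\kappa$.

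For the inductive step I would set $\mu=\kappa^{+n}$ and $\lambda=\mu^{+}=\kappa^{+(n+1)}$, take the $(n+1,\kappa)$-semiladder $L$ of cardinality $\mu$ handed down by the inductive hypothesis (which used precisely $\square_{\kappa^{+m}}(\sqleft{\mathrm{cf}(\kappa)})$ for $m<n$), and fix a square sequence $\langle C_\alpha : \alpha<\lambda\rangle$ witnessing $\square_{\kappa^{+n}}(\sqleft{\mathrm{cf}(\kappa)})$ on $\lambda=(\kappa^{+n})^{+}$. Thus each $C_\alpha$ is club in $\alpha$ with $\mathrm{otp}(C_\alpha)\le\mu=|L|$ and with the coherence $C_\beta=C_\alpha\cap\beta$ at limit points $\beta$ of $C_\alpha$, the refinement $\sqleft{\mathrm{cf}(\kappa)}$ additionally constraining the sequence at points of cofinality $\mathrm{cf}(\kappa)$. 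The construction I have in mind is to build the desired $(n+2,\kappa)$-semiladder $L'$ on an underlying set of size $\lambda$ by hanging below each $\alpha<\lambda$ a transported copy of the structure of $L$ indexed along $C_\alpha$: since $\mathrm{otp}(C_\alpha)\le|L|$ the set $C_\alpha$ can carry an isomorphic image of an initial part of $L$, and the coherence $C_\beta=C_\alpha\cap\beta$ forces any two such transported copies to agree on their overlap. One then defines the join of two elements by first joining along the ``spine'' (the ambient ordinal order on $\lambda$) and then, inside the common transported copy, using the join of $L$; coherence is exactly what makes this well defined and associative, so that $L'$ is a join-semilattice.

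With $L'$ built, the defining properties should fall out as follows. For the lower-cover/breadth requirement, each element picks up at most one new lower cover from the spine on top of the at most $n+1$ it inherits from its transported copy of $L$, giving the bound $n+2$, whence breadth at most $n+2$ by Lemma~\ref{lemma:breadth}. The cardinality is $\lambda=\kappa^{+(n+1)}$ by construction. The genuinely delicate property is that principal ideals of $L'$ have cardinality $<\kappa$: the ideal of $\alpha$ is assembled from up to $\mathrm{otp}(C_\alpha)\le\mu$ pieces, each an $L$-ideal of size $<\kappa$, and a crude union of that many small sets could easily reach $\kappa$.

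I expect this ideal-size estimate at limit points to be the main obstacle, and it is precisely here that the \emph{refined} square $\square_{\kappa^{+n}}(\sqleft{\mathrm{cf}(\kappa)})$, rather than plain $\square_{\kappa^{+n}}$, should be indispensable. The intended payoff of the $\mathrm{cf}(\kappa)$-coherence is that at a point $\alpha$ whose ideal threatens to grow large, those of cofinality at least $\mathrm{cf}(\kappa)$ and especially of cofinality exactly $\mathrm{cf}(\kappa)$, the transported copy in fact contributes only boundedly many distinct $L$-ideals, so that the total stays below $\kappa$ despite $\kappa$ being singular. Verifying this bound, together with checking that the gluing of the transported copies remains coherent across points of cofinality $\mathrm{cf}(\kappa)$ so that $L'$ is a bona fide join-semilattice and not merely a union of incompatible local pieces, is the technical heart of the argument; the remaining verifications should be routine bookkeeping resting on the coherence of $\langle C_\alpha:\alpha<\lambda\rangle$.
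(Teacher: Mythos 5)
Your skeleton---induction on $n$, consuming exactly one instance of the square principle per step, with the ordinal $\kappa$ as base case---matches the paper's Theorem~\ref{thm:mainplus2}, but the inductive step rests on a genuine misreading that derails the construction: $\square_{\kappa^{+n}}(\sqleft{\mathrm{cf}(\kappa)})$ is a \emph{weakening} of $\square_{\kappa^{+n}}$, not a refinement of it. It supplies clubs $C_\alpha$ \emph{only} at ordinals $\alpha$ with $\mathrm{cf}(\alpha)\ge\mathrm{cf}(\kappa)$, and demands coherence only at limit points of that cofinality. So your plan of hanging a transported copy of $L$ along $C_\alpha$ below \emph{each} $\alpha<\lambda$ has nothing to hang at the many ordinals of cofinality $<\mathrm{cf}(\kappa)$, and your intended payoff of the parameter---extra control at cofinality-$\mathrm{cf}(\kappa)$ points to keep ideals small---is aimed at the wrong place. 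In the paper the division of labor is the opposite: at $\mathrm{cf}(\gamma)\ge\mathrm{cf}(\kappa)$ one uses the club $C_\gamma$ and its coherence, while at $\mathrm{cf}(\gamma)<\mathrm{cf}(\kappa)$ one uses instead that the inductively given semiladder $B$ is $\mathrm{cf}(\kappa)$-\emph{directed}---Proposition~\ref{prop:directed}, applicable precisely because $B$ realizes the maximal cardinality $\kappa^{+n}$ allowed by Ditor's theorem for its breadth---to choose a single upper bound $p_\gamma$ of all the values $\varrho(\gamma_\mu,\gamma_\nu)$ along a short cofinal sequence, chosen moreover with $|{\downarrow}p_\gamma|\ge\mathrm{cf}(\gamma)$ so the resulting ideal stays small. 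This directedness mechanism is where the singularity of $\kappa$ is actually confronted, and it is entirely absent from your sketch.

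The second gap is that your gluing never defines the join: for two elements neither of which lies in a transported copy containing the other, ``join along the spine, then inside the common copy'' is not a definition, and coherence of the $C_\alpha$'s alone will not make it one. The paper's device is a map $\varrho:[\kappa^{+(n+1)}]^2\to B$, built by recursion along the square sequence via an auxiliary \emph{injective} sequence $\vec p_\gamma$ of elements of $B$ indexed by $C_\gamma$, and required to be transitive and subadditive (Definition~\ref{def:transmap}); the new semiladder is the quasi-product $(\kappa^{+(n+1)}\times B,\trianglelefteq_\varrho)$, where the join of $(\alpha,p)$ and $(\beta,q)$ for $\alpha\le\beta$ is $(\beta,\,p\vee q\vee\varrho(\alpha,\beta))$ (Proposition~\ref{prop:specialladder})---note the second coordinate, which your ordinal-only underlying set lacks. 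The ideal-size bound is then not a count of $\mathrm{otp}(C_\alpha)$-many pieces, which, as you rightly worry, could reach $\kappa$: one proves the uniform bound $|D_\varrho(\gamma,p)|\le|{\downarrow}p|+\aleph_0$, because only indices $\mu$ with $p_\gamma(\mu)\le p$ contribute and injectivity of $\vec p_\gamma$ caps these by $|{\downarrow}p|$ (Claim~\ref{claim:general5}); the principal ideal of $(\gamma,p)$ is exactly $(D_\varrho(\gamma,p)\cup\{\gamma\})\times({\downarrow}p)$. Finally, the $\MJ{(n+3)}$-freeness comes from the quasi-product machinery (Proposition~\ref{prop:quasi-productfree} via Corollary~\ref{cor:specialladder}), not from counting lower covers: these semilattices are not lower finite, so elements need not have lower covers at all (cf.\ $\mathbb{Q}$), which is why the paper works with $\MJ{(n+1)}$-freeness and lower coverings. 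In short, your high-level plan points in the right direction, but it mislocates where the weak square is available, omits the directedness argument handling small cofinalities, and lacks the $\varrho$/quasi-product formalism that makes the join well defined and the ideal bound provable.
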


\begin{theorem}\label{thm:main3}
Let $\kappa$ be an infinite cardinal and $n \in \omega$. If $\square_{\kappa^{+m}}$ holds for every $m < n$, then there exists an $(n+1, \kappa)$-ladder of cardinality $\kappa^{+n}$.
\end{theorem}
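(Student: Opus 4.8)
The plan is to prove Theorem~\ref{thm:main3} by induction on $n$, isolating the entire combinatorial content in a single \emph{successor step} and letting the square hypotheses feed into it one cardinal at a time. The base case $n=0$ carries a vacuous hypothesis on $\square$, and the chain $(\kappa,\in)$ is a $(1,\kappa)$-ladder of cardinality $\kappa=\kappa^{+0}$: its principal ideals are the initial segments $\alpha$, each of cardinality $<\kappa$, every element has at most one lower cover, and a chain is a lattice. For the inductive step I would prove and then apply the following \emph{step lemma}: \emph{if $\square_\lambda$ holds and there is an $(m,\kappa)$-ladder of cardinality $\lambda$, then there is an $(m+1,\kappa)$-ladder of cardinality $\lambda^+$.} Granting it, assume the theorem for $n$ and suppose $\square_{\kappa^{+m}}$ holds for all $m\le n$. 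The inductive hypothesis, using the squares with $m<n$, yields an $(n+1,\kappa)$-ladder $K$ of cardinality $\kappa^{+n}$; feeding $K$ and $\square_{\kappa^{+n}}$ into the step lemma (with $\lambda=\kappa^{+n}$) produces an $(n+2,\kappa)$-ladder of cardinality $\kappa^{+(n+1)}$, which is exactly the theorem for $n+1$. Thus everything reduces to the step lemma.

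To prove the step lemma I would fix a $\square_\lambda$-sequence $\langle C_\alpha : \alpha\in\mathrm{Lim}(\lambda^+)\rangle$, so each $C_\alpha$ is club in $\alpha$ with $\mathrm{otp}(C_\alpha)\le\lambda$ and the coherence $C_{\bar\alpha}=C_\alpha\cap\bar\alpha$ holds for every limit point $\bar\alpha$ of $C_\alpha$, and build the desired $(m+1,\kappa)$-ladder $L$ by transfinite recursion of length $\lambda^+$ as a continuous increasing union $L=\bigcup_{\alpha<\lambda^+}L_\alpha$, where each $L_\alpha$ is an $(m+1,\kappa)$-ladder of cardinality $\le\lambda$ obtained by gluing, along the spine given by $C_\alpha$, coherently twisted copies of the input ladder $K$; at limits one sets $L_\gamma=\bigcup_{\beta<\gamma}L_\beta$. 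Morally $L$ is a single copy of $K$ smeared along the new ``direction'' $\lambda^+$, and it is this one extra direction that raises the defining lower-cover bound from $m$ to $m+1$; the coherence data $C_\alpha$ is used to define, for each newly created element, both its (finitely many) lower covers and its meets with the elements already present, with the equation $C_{\bar\alpha}=C_\alpha\cap\bar\alpha$ forcing these local prescriptions to cohere across stages. A natural concrete realization is to read the order, the meets and the joins off the minimal walks determined by $\langle C_\alpha\rangle$: since every walk is finite, the ``trace'' of an element is small, which is what will ultimately keep principal ideals below $\kappa$ (in the case $\kappa=\aleph_0$ the finiteness of walks gives lower finiteness outright).

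The verifications then split into four routine-to-moderate tasks. \emph{Cardinality}: continuity together with the creation of at least one new element per stage gives $\lambda^+\le|L|\le\lambda^+\cdot\lambda=\lambda^+$. \emph{Sublattice chain}: each $L_\alpha$ is an $(m+1,\kappa)$-ladder and $L_\alpha$ is a sublattice of $L_\beta$ for $\alpha<\beta$, so the properties pass to the union---joins and the lower-cover bound are preserved under increasing unions of sublattices, and breadth $\le m+1$ is then automatic from Lemma~\ref{lemma:breadth}, so breadth need not be treated separately. \emph{Small ideals}: one shows that for each $a\in L$ the principal ideal $\{x: x\preceq a\}$ is absorbed into a single localization controlled by the walk from $a$, and hence has cardinality $<\kappa$, using that the input $K$ already has ideals of cardinality $<\kappa$. \emph{Lattice axioms}: existence of finite joins is built in, and the remaining point is the existence of meets.

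The hard part, and the reason Theorem~\ref{thm:main3} needs the \emph{full} $\square_\lambda$ rather than the weak square $\square_\lambda(\sqleft{\mathrm{cf}(\kappa)})$ sufficient for the semiladders of Theorems~\ref{thm:main1} and~\ref{thm:main2}, is precisely the existence of \textbf{meets} while keeping principal ideals below $\kappa$. Building joins only requires an almost-coherent system of approach sequences, which is what weak square supplies; but for a greatest lower bound of two elements $a,b$ lying far apart on the spine and reached through different limit stages, the candidate meet computed from the walk out of $a$ must coincide with the one computed from the walk out of $b$, and the accumulated set of common lower bounds must not grow past $\kappa$. It is exactly the genuine coherence $C_{\bar\alpha}=C_\alpha\cap\bar\alpha$ that makes the two walk-computations land on the same element and keeps the meet localized; the many guesses allowed by weak square would only yield almost-coherence, enough for joins but not to pin down a single coherent meet. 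I therefore expect the core of the argument to be an induction on walk length, using the coherence equation at limit points of uncountable cofinality, showing simultaneously that $\wedge$ is well defined and absorptive and that the principal ideals it generates stay of cardinality $<\kappa$.
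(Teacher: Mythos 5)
Your high-level skeleton does match the paper's: the proof is an induction on $n$ whose whole content is a single successor step consuming one instance of $\square_{\kappa^{+n}}$ at a time; the new ladder is one copy of the previous ladder ``smeared along'' the next cardinal, with the gluing governed by the square sequence; and your diagnosis that full square (coherence at \emph{all} limit points) is what buys meets, while the weak square of Theorem~\ref{thm:main2} only buys joins, is exactly right---in the paper this surfaces as the two extra conditions, closedness of $D_\varrho(\alpha,p)$ in $\alpha$ and $\varrho(0,\alpha)=\mathbf{0}_B$, which by Proposition~\ref{prop:specialladder}\ref{prop:specialladder-3} are precisely what upgrades the quasi-product join-semilattice to a lattice. (One small correction to your heuristic: the paper's mechanism for meets is not that two ``walk computations'' coincide, but that the meet of $(\beta,q)$ and $(\delta,r)$ is $(\alpha,q\wedge r)$ with $\alpha=\max\bigl((D(\beta,q)\cup\{\beta\})\cap D(\delta,r)\bigr)$, a maximum that exists because the $D$-sets are closed and contain $0$.)

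The genuine gap sits exactly at your step lemma, which you state for an \emph{arbitrary} $(m,\kappa)$-ladder $K$ as input; the paper does not prove that and your proposal gives no way to. The paper strengthens the induction hypothesis: the object carried through the induction is a \emph{special} ladder in the sense of Definition~\ref{def:specialladder}, an iterated quasi-product $\gamma\times B$ equipped with a transitive, subadditive $\varrho$ and hence a height map $\mathrm{ht}$ onto $\kappa^{+n}$. This extra structure is used essentially at the limit points $\nu$ of $C_\gamma$: there one sets $p_\gamma(\nu)=\bigl(\liminf_{\mu<\nu}\mathrm{ht}(p_\gamma(\mu)),\mathbf{0}_P\bigr)$---an element which in general is \emph{not} an upper bound of the earlier $p_\gamma(\mu)$'s---and the closedness of the resulting $\Delta$- and $D$-sets (Claims~\ref{claim:ladder1} and~\ref{claim:ladder8}) leans on the quasi-product decomposition $B=\kappa^{+n}\times P$ and on the closedness of $\{\mathrm{ht}(q)\mid q\le p\}$ (Remark~\ref{rmk:closedness}), itself a consequence of $B$ being a special \emph{lattice}. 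For an arbitrary input ladder there is no height, no liminf, and no closure along which to choose the ``twist'' coherently at limit stages; nor can one take upper bounds of the accumulated sequence instead, since a ladder has principal ideals of size $<\kappa$ and is not directed enough along long limits. Your proposal never confronts this point: the order, joins and meets of $L$ are deferred to ``reading off minimal walks,'' which is a guess rather than a construction---the paper's own footnote remarks that it is unclear whether its $\varrho$-functions fit the walks framework, and walks natively produce characteristics valued in $\omega$ or in trees, not transitive subadditive maps into $K$ compatible with its lattice order. Consequently the four ``routine'' verifications (in particular that each $L_\alpha$ is a sublattice of $L_\beta$, i.e.\ that newly created elements introduce no new meets of old pairs, and that the $\MJ{(m+2)}$-freeness and the $<\kappa$ bound on ideals survive) have nothing concrete to attach to. To repair this along the paper's lines, replace your step lemma by its specialized form---if $\square_\lambda$ holds and there is a special $(m+1,\kappa)$-ladder $B$ of cardinality $\lambda$, then there is a special $(m+2,\kappa)$-ladder of cardinality $\lambda^+$---and build the new order explicitly as $\trianglelefteq_\varrho$ for a map $\varrho:[\lambda^+]^2\to B$ defined by recursion along the square sequence, verifying transitivity, subadditivity, $\varrho(0,\alpha)=\mathbf{0}_B$, and both smallness and closedness of $D_\varrho(\alpha,p)$.
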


\fxnote*{}{The (join-semi)lattices we construct to witness the conclusions of Theorems~\ref{thm:main1}--\ref{thm:main3} are well-founded; moreover, they can also be taken to have a least element, by adjoining one if necessary.}

Theorem~\ref{thm:main1} yields that question~\ref{questionA} has a positive answer in $\mathsf{ZFC}$ when $\kappa$ is uncountable regular, as already shown by Wehrung~\cite{MR2609217}, although we prove it by constructing more regular witnesses (i.e. semiladders). 

Theorem~\ref{thm:main2} implies that, consistently, question~\ref{questionA} has a positive answer also when $\kappa$ is a singular---the combinatorial principle $\square_{\lambda}(\sqleft{\chi})$, which is defined at the beginning of Section~\ref{sec:thm2}, is a weakening of Jensen's $\square_{\lambda}$, and as such holds under the axiom of constructibility ($\mathbf{V=L}$). \fxnote*{}{In particular, it entails that Ditor's Problem~\ref{ditprob2} has, consistently, a positive solution:

\begin{corollary}\label{cor:two}
If $\square_{\aleph_\omega}$ holds, then there exists a join-semilattice of breadth $2$, cardinality $\aleph_{\omega+1}$, and whose principal ideals have cardinality $< \aleph_\omega$.
\end{corollary}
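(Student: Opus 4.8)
The plan is to derive Corollary~\ref{cor:two} directly from Theorem~\ref{thm:main2} by specializing the parameters. I take $\kappa = \aleph_\omega$ and $n = 1$. Then $\kappa^{+n} = \kappa^{+1} = \aleph_{\omega+1}$, and $n+1 = 2$, so Theorem~\ref{thm:main2} will produce a $(2, \aleph_\omega)$-semiladder of cardinality $\aleph_{\omega+1}$, which is a join-semilattice whose principal ideals have cardinality $< \aleph_\omega$. The condition $n+1 = 2$ together with Lemma~\ref{lemma:breadth} ensures the object has breadth at most $2$; I will need to also argue the breadth is exactly $2$ rather than strictly smaller, which should follow from the cardinality being uncountable (a semilattice of breadth at most $1$ is a chain, and Ditor's bound in Theorem~\ref{thm:Ditor} would force it to have cardinality at most $\aleph_\omega$, contradicting $|{\cdot}| = \aleph_{\omega+1}$).

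First I would verify the hypotheses of Theorem~\ref{thm:main2} are met. Since $\kappa = \aleph_\omega$ is singular with $\mathrm{cf}(\kappa) = \aleph_0 = \omega$, the cardinal $\kappa$ is indeed a singular cardinal as required. The hypothesis is that $\square_{\kappa^{+m}}(\sqleft{\mathrm{cf}(\kappa)})$ holds for every $m < n = 1$, i.e. only for $m = 0$: I need $\square_{\aleph_\omega}(\sqleft{\omega})$. The corollary assumes the stronger principle $\square_{\aleph_\omega}$, so the key step is to observe that $\square_{\aleph_\omega}$ implies its weakening $\square_{\aleph_\omega}(\sqleft{\omega})$. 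This is exactly the remark made in the text that $\square_{\lambda}(\sqleft{\chi})$ is a weakening of Jensen's $\square_\lambda$; I would cite the definition from the beginning of Section~\ref{sec:thm2} and note that $\square_\lambda$ trivially entails $\square_\lambda(\sqleft{\chi})$ for any $\chi$, in particular for $\chi = \mathrm{cf}(\aleph_\omega) = \omega$.

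With the hypothesis discharged, Theorem~\ref{thm:main2} yields a join-semilattice $L$ of cardinality $\aleph_{\omega+1}$ whose principal ideals have cardinality $< \aleph_\omega$, and which is a $(2, \aleph_\omega)$-semiladder. I would then translate the $(n+1, \kappa)$-semiladder conclusion into the plain language of the corollary: the cardinality and principal-ideal-size conditions are immediate from the definition of $(n, \kappa)$-semiladder, and the breadth statement follows from Lemma~\ref{lemma:breadth}, which gives breadth at most $2$, refined to exactly $2$ by the cardinality argument sketched above.

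I expect the only genuine content here to be the implication $\square_{\aleph_\omega} \Rightarrow \square_{\aleph_\omega}(\sqleft{\omega})$, which is routine once the two principles are placed side by side, so the corollary is essentially an application of Theorem~\ref{thm:main2} with the parameters pinned down. The main thing to be careful about is the breadth being \emph{exactly} $2$ and not merely at most $2$; that is where I would spend a sentence, invoking Ditor's cardinality bound (Theorem~\ref{thm:Ditor}) to rule out breadth $1$.
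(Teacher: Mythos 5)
Your proposal is correct and matches the paper's (implicit) derivation exactly: Corollary~\ref{cor:two} is obtained by applying Theorem~\ref{thm:main2} with $\kappa = \aleph_\omega$ and $n = 1$, noting that $\square_{\aleph_\omega}$ gives $\square_{\aleph_\omega}(\sqleft{\omega})$ (the paper even records that $\square_\lambda(\sqleft{\omega})$ is \emph{equivalent} to $\square_\lambda$), and then reading off breadth at most $2$ from Lemma~\ref{lemma:breadth}. Your extra sentence ruling out breadth $\le 1$ via Theorem~\ref{thm:Ditor} (a chain with principal ideals of size $< \aleph_\omega$ has size at most $\aleph_\omega$) is exactly the detail the paper leaves implicit, and it is handled correctly.
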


The failure of $\square_{\aleph_\omega}$ implies the existence of an inner model with a Woodin cardinal\footnote{If, moreover, $\aleph_\omega$ is strong limit, then the failure of $\square_{\aleph_\omega}$ implies $\mathsf{AD}^{\mathbf{L}(\mathbb{R})}$ \cite{MR2194247} and more \cite{MR3225589}.} \cite{MR1359965}. Thus, it follows directly from Corollary~\ref{cor:two} that a negative solution to Ditor's Problem~\ref{ditprob2} has great consistency strength.}

Theorem~\ref{thm:main3} has the most demanding hypotheses, but also yields the strongest result: question~\ref{questionB} has \fxnote*{}{a} positive answer for every $n > 0$, and \fxnote*{}{question}~\ref{questionA} has \fxnote*{}{a} positive answer for every $\kappa$ and $n > 0$, with the witnesses being not only join-semilattices, but lattices. \fxnote*{}{In particular, Theorem~\ref{thm:main3} entails that a positive solution to Ditor's Problem~\ref{ditprob1} follows from $\square_{\aleph_1}$---recall that Wehrung also gave a consistent positive solution to this problem using an $(\omega_1,1)$-morass \cite{MR2609217}.
\begin{corollary}\label{cor:first}
If $\square_{\aleph_1}$ holds, then there exists a $3$-ladder of cardinality $\aleph_2$.
\end{corollary}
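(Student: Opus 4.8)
The plan is to obtain this as a direct instance of Theorem~\ref{thm:main3}. I would set $\kappa = \aleph_0$ and $n = 2$; the theorem then produces a $(3, \aleph_0)$-ladder of cardinality $\aleph_0^{+2} = \aleph_2$. As recorded in the introduction, the notion of $(m, \aleph_0)$-ladder coincides with the standard notion of $m$-ladder, so such an object is precisely a $3$-ladder of cardinality $\aleph_2$, and it is a genuine lattice since Theorem~\ref{thm:main3} produces ladders rather than mere semiladders. The whole task therefore reduces to checking the hypotheses of Theorem~\ref{thm:main3} for these parameters.

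With $\kappa = \aleph_0$ and $n = 2$, the hypothesis ``$\square_{\kappa^{+m}}$ holds for every $m < n$'' amounts to the two instances $\square_{\aleph_0}$ (for $m = 0$) and $\square_{\aleph_1}$ (for $m = 1$). The second is exactly the assumption of the corollary. For the first, I would observe that $\square_{\aleph_0}$ is a theorem of $\mathsf{ZFC}$: taking each $C_\alpha$, for limit $\alpha < \omega_1$, to be an arbitrary cofinal subset of $\alpha$ of order type $\omega$ yields a witnessing sequence, since every such $C_\alpha$ is a club in $\alpha$ with no limit points below $\alpha$, so that the coherence clause holds vacuously and the order-type constraint is automatic.

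I do not expect any genuine obstacle; the argument is a pure specialization. The only point worth flagging is the boundary case $\square_{\aleph_0}$, which---unlike square principles at uncountable or singular cardinals---holds outright in $\mathsf{ZFC}$, so that no hypothesis beyond $\square_{\aleph_1}$ is needed. With both instances secured, Theorem~\ref{thm:main3} applies and delivers the desired $3$-ladder of size $\aleph_2$.
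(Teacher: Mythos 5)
Your proof is correct and takes essentially the same route as the paper, which obtains Corollary~\ref{cor:first} precisely by specializing Theorem~\ref{thm:main3} to $\kappa=\aleph_0$ and $n=2$, with the $m=0$ instance discharged because $\square_\omega$ holds trivially in $\mathsf{ZFC}$ (as the paper itself notes when recovering Ditor's $2$-ladder result). Your explicit ladder-system witness for $\square_{\aleph_0}$ and your appeal to the coincidence of $(3,\aleph_0)$-ladders with $3$-ladders are exactly the supporting observations the paper relies on.
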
}
Furthermore, since it is well-known that the failure of $\square_{\aleph_1}$ implies that $\omega_2$ is Mahlo in $\mathbf{L}$ \cite{MR750828}, we obtain the following strengthening of Wehrung's consistency result as direct consequence of Corollary~\ref{cor:first}.

\begin{corollary}
If there is no $3$-ladder of cardinality $\aleph_2$, then $\omega_2$ is Mahlo in $\mathbf{L}$.
\end{corollary}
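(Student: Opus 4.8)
The plan is to obtain this as an immediate contrapositive of Corollary~\ref{cor:first}, combined with the classical inner-model fact about the failure of square at $\aleph_1$ cited as \cite{MR750828}. All the substantive work has already been carried out upstream: Theorem~\ref{thm:main3} (through Corollary~\ref{cor:first}) supplies the combinatorial construction, and Jensen's analysis of $\square$ in $\mathbf{L}$ supplies the inner-model-theoretic conclusion. The argument at this stage is therefore purely a matter of chaining two implications.

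First I would contrapose Corollary~\ref{cor:first}. That corollary asserts that $\square_{\aleph_1}$ implies the existence of a $3$-ladder of cardinality $\aleph_2$; its contrapositive thus reads: if no $3$-ladder of cardinality $\aleph_2$ exists, then $\square_{\aleph_1}$ fails. Next I would invoke the result cited as \cite{MR750828}, namely that the failure of $\square_{\aleph_1}$ implies that $\omega_2$ is Mahlo in $\mathbf{L}$. Composing these two implications yields exactly the desired statement: the nonexistence of a $3$-ladder of cardinality $\aleph_2$ implies that $\omega_2$ is Mahlo in $\mathbf{L}$.

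There is essentially no obstacle here, since the proof is a one-line composition. The only point requiring care is to confirm that the cited inner-model result is stated in the precise form we use---failure of $\square_{\aleph_1}$ yielding Mahlo-ness of $\omega_2$ in $\mathbf{L}$, rather than of some relativized or further-weakened variant---which we take directly from the literature. The improvement over Wehrung's earlier consistency bound (inaccessibility of $\omega_2$ in $\mathbf{L}$) arises entirely from having weakened the hypothesis of Corollary~\ref{cor:first} to $\square_{\aleph_1}$, which correspondingly sharpens the inner-model lower bound to Mahlo-ness.
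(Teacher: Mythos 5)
Your proof is correct and matches the paper's own argument exactly: the paper derives this corollary as the direct composition of the contrapositive of Corollary~\ref{cor:first} with the known fact that the failure of $\square_{\aleph_1}$ implies $\omega_2$ is Mahlo in $\mathbf{L}$ \cite{MR750828}. Nothing further is needed.
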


\fxnote*{}{More generally, Theorem~\ref{thm:main3} implies that Ditor's questions~\ref{questionA} and \ref{questionB} have positive answers under the axiom of constructibility:}
\begin{corollary}
Assume $\mathbf{V = L}$. Then:
\begin{enumerate}[label={\upshape (\alph*)}]
\itemsep0.3em
\item For every $n\in\omega$ and infinite cardinal $\kappa$, there exists a join-semilattice of breadth $n+1$ and cardinality $\kappa^{+n}$ whose principal ideals have cardinality $< \kappa$.
\item For every $n \in \omega$, there exists an $(n+1)$-ladder of cardinality $\aleph_n$.
\end{enumerate}
\end{corollary}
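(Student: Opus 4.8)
The plan is to derive the final Corollary directly from Theorem~\ref{thm:main3} together with standard facts about the square principle and about breadth. The key observation is that under $\mathbf{V = L}$, Jensen's square principle $\square_\lambda$ holds for \emph{every} infinite cardinal $\lambda$; this is one of the classical consequences of the fine-structure analysis of $\mathbf{L}$. Hence the hypothesis of Theorem~\ref{thm:main3}---that $\square_{\kappa^{+m}}$ holds for every $m < n$---is automatically satisfied under $\mathbf{V = L}$, for any infinite cardinal $\kappa$ and any $n \in \omega$. So the entire burden of the proof has been front-loaded into Theorem~\ref{thm:main3}, and the corollary is essentially an unwinding of definitions.

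For part (a), I would fix $n \in \omega$ and an infinite cardinal $\kappa$. Applying Theorem~\ref{thm:main3} yields an $(n+1, \kappa)$-ladder $L$ of cardinality $\kappa^{+n}$. By definition an $(n+1,\kappa)$-ladder is in particular an $(n+1,\kappa)$-semiladder, and by Lemma~\ref{lemma:breadth} every $(n+1,\kappa)$-semiladder has breadth at most $n+1$. Moreover its principal ideals have cardinality $< \kappa$ by definition. This gives a join-semilattice of breadth \emph{at most} $n+1$, cardinality $\kappa^{+n}$, with principal ideals of size $< \kappa$. The only gap is that question~\ref{questionA} asks for breadth \emph{exactly} $n+1$. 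Here I would invoke the cardinality bound built into the excerpt (the Ditor bound, Theorem~\ref{thm:Ditor}): a join-semilattice of breadth at most $n$ whose principal ideals have cardinality $< \kappa$ has cardinality at most $\kappa^{+(n-1)} < \kappa^{+n}$, so $L$ cannot have breadth $\leq n$; combined with breadth $\leq n+1$ this forces breadth exactly $n+1$. (One should be slightly careful for $n = 0$, but that case is flagged as trivial in the excerpt.)

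For part (b), I would specialize part (a) to $\kappa = \aleph_0$, again noting $n > 0$ suffices since $n = 0$ is trivial. By Theorem~\ref{thm:main3} there is an $(n+1, \aleph_0)$-ladder of cardinality $\aleph_0^{+n} = \aleph_n$. The excerpt states explicitly that the notion of $(n+1, \aleph_0)$-ladder coincides with the standard notion of $(n+1)$-ladder, so this object is an $(n+1)$-ladder of cardinality $\aleph_n$, as required.

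The main (and only genuine) obstacle is purely expository rather than mathematical: one must cite correctly that $\mathbf{V = L}$ implies $\square_\lambda$ for all infinite $\lambda$, and one must handle the passage from ``breadth at most $n+1$'' to ``breadth exactly $n+1$'' via the sharpness of Ditor's cardinality bound. Everything else is a direct application of the already-established Theorem~\ref{thm:main3} and the translation between $(n, \aleph_0)$-ladders and $n$-ladders. In short, the corollary requires no new construction; it is the clean set-theoretic shadow of the three main theorems, obtained by feeding the global square principle of $\mathbf{L}$ into Theorem~\ref{thm:main3}.
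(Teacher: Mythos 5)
Your proposal is correct and matches the paper's (implicit) derivation: the corollary is obtained exactly by feeding Jensen's theorem that $\mathbf{V=L}$ implies $\square_\lambda$ for every infinite $\lambda$ into Theorem~\ref{thm:main3}, then using Lemma~\ref{lemma:breadth} and the identification of $(n+1,\aleph_0)$-ladders with $(n+1)$-ladders. Your extra step upgrading ``breadth at most $n+1$'' to ``breadth exactly $n+1$'' via the sharpness of Theorem~\ref{thm:Ditor} is a point the paper leaves tacit, and you handle it (including the $n=0$ edge case) correctly.
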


\fxnote*{}{Lastly, note that Theorem~\ref{thm:main3} recovers Ditor's result when $\kappa = \aleph_0$ and $n = 1$: there exists a $2$-ladder of cardinality $\aleph_1$---indeed, $\square_\omega$ trivially holds.} 

In Section~\ref{sec:preliminaries}, we discuss some preliminary results. In Section~\ref{sec:generalladder} we define and examine $(n, \kappa)$-ladders.
In Section~\ref{sec:thm1}, we prove Theorem~\ref{thm:main1}. In Section~\ref{sec:specialladder} we introduce the notion of \emph{special} $(n, \kappa)$-ladder, which is crucial for employing the square machinery to prove Theorems~\ref{thm:main2} and \ref{thm:main3}, whose proofs are contained in  Sections~\ref{sec:thm2} and \ref{sec:thm3}, respectively. Finally, in the last section, we address some open questions.

\section{Preliminaries}\label{sec:preliminaries}

\subsection{Notation and basic concepts}
The monographs \cite{MR1940513} and \cite{MR2768581} are our references for all classical definitions and notation in set theory and lattice theory, respectively. 

\fxnote*{}{Given an ordinal $\alpha$, $\mathrm{cf}(\alpha)$ denotes the \emph{cofinality} of $\alpha$, that is the least order type of a cofinal subset of $\alpha$ (equivalently, the least cardinality of an unbounded subset of $\alpha$). For a well-ordered set $X$, $\mathrm{otp}(X)$ denotes its order type, i.e. the unique ordinal to which $X$ is isomorphic. For a given set $X$, and a cardinal $\lambda$, we denote by $[X]^{\lambda}$ and $[X]^{<\lambda}$ the sets of all the subsets of $X$ of cardinality exactly $\lambda$ and less than $\lambda$, respectively.}

\fxnote*{}{A \emph{join-semilattice} is a nonempty set equipped with a binary, associative, commutative, and idempotent operation, then denoted by $\vee$; it induces a partial order via $x \le y \iff x \vee y = y$. Equivalently, a join-semilattice is a partially ordered set in which every pair of elements $x,y$ admits a least upper bound, denoted by $x \vee y$. The dual notion is the \emph{meet-semilattice}.}  We treat semilattices as algebraic structures or as posets depending on what representation is more suited for the given context.

Given a join-semilattice $(P, \le)$ and some $p \in P$, we denote by $P \downarrow p$ and $P \uparrow p$ the sets $\{q \in P \mid q \le p\}$ and $\{q \in P \mid q \ge p\}$, respectively. Sometimes, instead of $P \downarrow p$ we write ${\le} \downarrow p$ or simply ${\downarrow} p$, when no ambiguity arises. A nonempty subset $D \subseteq P$ is \emph{downward closed} if for every $p \in D$, ${\downarrow} p \subseteq D$. Recall also that a nonempty subset $S\subseteq P$ is called a \emph{join-subsemilattice} of $P$ if it is closed under \fxnote*{}{binary} joins. Furthermore, a join-subsemilattice which is also downward closed is called an \emph{ideal}. Note that $P \downarrow p$ is an ideal of $P$ for every $p \in P$; such ideals are known as \emph{principal ideals}. An ideal $I$ that does not coincide with the whole join-semilattice is called a \emph{proper ideal}.

The greatest element (resp. least element) of a join-semilattice $P$, if it exists, is denoted by $\mathbf{1}_P$ (resp. $\mathbf{0}_P$), or simply $\mathbf{1}$ (resp. $\mathbf{0}$) if there is no risk of ambiguity. If we write $\bigvee \emptyset$, we are tacitly assuming that $P$ has a least element, and therefore that $\bigvee \emptyset = \mathbf{0}$. Furthermore, given two elements $p,q \in P$, $q$ is a \emph{lower cover of $p$} if $q < p$ and there is no $x \in P$ with $q < x < p$. 

Given two posets $(P, \le)$ and $(Q, \le)$, a map $f: P \rightarrow Q$ is \emph{isotone}  if $p \le p'$ implies $f(p) \le f(p')$ for every $p,p' \in P$. A map $f: P\rightarrow Q$ is said to be an \emph{order-embedding} if it is isotone, injective, and its inverse is isotone. An \emph{isomorphism} is a surjective order-embedding. \fxnote*{}{For readability, when working with linear orders we use the terms \emph{increasing} and \emph{strictly increasing} to mean isotone and order-embedding, respectively.} Furthermore, when $P$ and $Q$ are join-semilattices, $f$ is a \emph{join-homomorphism} if $f(p \vee p') = f(p) \vee f(p')$ for every $p,p'\in P$. An injective join-homomorphism is called a \emph{join-embedding}. Note that every join-homomorphism is isotone and that every join-embedding is an order-embedding, but the converse is not true in general. \fxnote*{}{We write $f: X \hookrightarrow Y$ to mean that $f$ is an embedding from $X$ into $Y$ (either a join-embedding or an order-embedding, as specified each time).}

Let us briefly review the notion of quotient join-semilattice. Given a join-semilattice $( P, \vee)$, an equivalence relation $\sim$ on $P$ is a \emph{congruence relation} if for all $x_0,x_1,y_0,y_1$ in $P$,
\[
x_0 \sim y_0 \text{ and } x_1 \sim y_1 \Rightarrow x_0 \vee x_1 \sim y_0 \vee y_1.
\]

Given a congruence relation $\sim$ on $P$, we can define the join operator $\vee$ on the quotient $P/{\sim}$ as follows: for every $x,y \in P$,
\[
[x]_\sim \vee [y]_\sim \coloneqq [x\vee y]_\sim
\]
It is easy to check that this operator satisfies all the properties of a join. The resulting join-semilattice $P / {\sim}$ is called the \emph{quotient join-semilattice of $P$ modulo~$\sim$}. We denote the quotient map by $\pi_\sim : P \rightarrow P/{\sim}$.

Any ideal $I$ of $P$ induces the following natural congruence relation $\sim_I$ on $P$: for $x,y \in P$, $x \sim_I y$ if there exists $z \in I$ such that $x \vee z = y \vee z$. In this case, we simply write $P/I$ and $\pi_I$ instead of $P/{\sim_I}$ and $\pi_{\sim_I}$. Not every congruence relation on a join-semilattice is induced by an ideal.

\begin{definition}
Let $P$ be a join-semilattice and $n\in\omega$. We say that $P$ has \emph{breadth at most $n$} if, for every nonempty finite subset $X$ of  $P$, there exists $Y\subseteq X$ with at most $n$ elements such that $\bigvee X = \bigvee Y$. The \emph{breadth} of $P$ is the least $n\in\omega$ such that $P$ has breadth at most $n$, if such $n$ exists.
\end{definition}

\fxnote*{}{ In fact, there is a more general notion of breadth which is self-dual and purely poset-theoretical \cite[\S 4]{MR0732199}, but the definition given above is more convenient for our purposes.}
Furthermore, note that a join-semilattice $P$ has breadth $0$ if and only if  $P = \{\mathbf{0}\}$, and has breadth at most $1$ if and only if it is a linear order. \fxnote*{}{The next lemma is immediate from the definition.}

\begin{lemma}
Given a join-semilattice $P$ and an $n\in\omega$, the following are equivalent:
\begin{enumerate}[label={\upshape (\arabic*)}]
\itemsep0.3em
\item $P$ has breadth at most $n$.
\item For every $X \in [P]^{n+1}$, there exists $Y \in [X]^n$ such that $\bigvee X = \bigvee Y$.
\end{enumerate}
\end{lemma}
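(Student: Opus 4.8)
The plan is to prove the two implications separately. The implication $(1)\Rightarrow(2)$ is essentially immediate: given $X\in[P]^{n+1}$, condition $(1)$ yields a subset $Y\subseteq X$ with $|Y|\le n$ and $\bigvee X=\bigvee Y$; if $|Y|<n$ I would simply enlarge $Y$ to a subset $Y'\subseteq X$ of size exactly $n$ (possible since $|X|=n+1$), noting that $Y\subseteq Y'\subseteq X$ forces $\bigvee Y\le\bigvee Y'\le\bigvee X=\bigvee Y$, so that $\bigvee Y'=\bigvee X$ and $Y'\in[X]^n$ witnesses $(2)$.

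The substance lies in $(2)\Rightarrow(1)$, which I would establish by induction on the cardinality $m=|X|$ of a nonempty finite $X\subseteq P$, proving the slightly stronger statement that whenever $m\ge n+1$ there is $Y\in[X]^n$ with $\bigvee Y=\bigvee X$ (for $m\le n$ one takes $Y=X$, which already has at most $n$ elements, so $(1)$ holds trivially). The base case $m=n+1$ is precisely hypothesis $(2)$. For the inductive step, suppose the claim holds for sets of size $m\ge n+1$ and let $|X|=m+1$. I would fix any $a\in X$, set $X_0=X\setminus\{a\}$, and apply the induction hypothesis to obtain $Y_0\in[X_0]^n$ with $\bigvee Y_0=\bigvee X_0$. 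The key move is then to reintroduce $a$: since $a\notin Y_0$, the set $Y_0\cup\{a\}$ has exactly $n+1$ elements and satisfies $\bigvee(Y_0\cup\{a\})=\bigvee Y_0\vee a=\bigvee X_0\vee a=\bigvee X$. Applying $(2)$ to $Y_0\cup\{a\}\in[P]^{n+1}$ produces $Z\in[Y_0\cup\{a\}]^n\subseteq[X]^n$ with $\bigvee Z=\bigvee(Y_0\cup\{a\})=\bigvee X$, which completes the step.

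There is no serious obstacle here; the only point requiring care is recognizing that the naive induction—remove one element, shrink via the inductive hypothesis, then re-add the removed element and reduce once more—is exactly what lets $(2)$, a hypothesis concerning $(n+1)$-element sets only, propagate to arbitrarily large finite subsets. One should also keep the degenerate case $n=0$ in mind, where $[X]^n=\{\emptyset\}$ and $\bigvee\emptyset=\mathbf{0}$ under the stated convention; the same argument applies verbatim, and both conditions then reduce to $P=\{\mathbf{0}\}$.
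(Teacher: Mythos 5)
Your proof is correct. Note that the paper offers no argument at all for this lemma---it is stated with the remark that it is ``immediate from the definition''---so there is no proof to diverge from; your induction (remove an element, shrink $X_0$ via the inductive hypothesis, re-adjoin the removed element to form an $(n+1)$-element set, and apply (2) once more) is exactly the routine argument the paper leaves implicit, and your care with the enlargement step in $(1)\Rightarrow(2)$ and with the degenerate case $n=0$ under the convention $\bigvee\emptyset=\mathbf{0}$ is appropriate.
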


\subsection{Ditor's theorem}
We have already mentioned the following theorem at the beginning of the introduction, as it underpins questions~\ref{questionA} and \ref{questionB}. It shows that the breadth, together with the cardinality of the principal ideals, provides a neat upper bound on the cardinality of the join-semilattice.

\begin{theorem}[{Ditor, \cite{MR0732199}}]\label{thm:Ditor}
Given some $n\in\omega$ and an infinite cardinal $\kappa$, if $P$ is a join-semilattice of breadth at most $n + 1$ whose principal ideals have cardinality $< \kappa$, then
\begin{enumerate}[label={\upshape (\alph*)}]
\itemsep0.3em
\item\label{thm:Ditor-1}
\( |P| \le \kappa^{+n} \), and
\item \label{thm:Ditor-2}
\( |I| < \kappa^{+n} \) for every proper ideal $I$ of $P$.
\end{enumerate}
\end{theorem}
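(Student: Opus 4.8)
The plan is to prove \ref{thm:Ditor-1} and \ref{thm:Ditor-2} \emph{simultaneously} by induction on $n$. A simultaneous induction is needed because the strict bound $|I| < \kappa^{+n}$ for \emph{proper} ideals is genuinely stronger than $|P| \le \kappa^{+n}$: applying \ref{thm:Ditor-1} to a proper ideal $I$ (which is itself a join-semilattice of the same breadth with principal ideals $I \downarrow p = P \downarrow p$ of cardinality $< \kappa$) only yields $|I| \le \kappa^{+n}$, not the strict inequality, so properness must be exploited separately.

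For the base case $n = 0$, a join-semilattice of breadth at most $1$ is a linear order. If $I$ is a proper ideal and $q \in P \setminus I$, then downward closure forces $I \subseteq P \downarrow q$, whence $|I| < \kappa$, giving \ref{thm:Ditor-2}. For \ref{thm:Ditor-1}, let $\mu$ be the least cardinality of a cofinal subset of $P$ and fix an increasing cofinal sequence $\langle p_i : i < \mu\rangle$; since $\{p_j : j < i\} \subseteq P \downarrow p_i$ we get $|i| < \kappa$ for every $i < \mu$, hence $\mu \le \kappa$, and then $|P| = \bigl|\bigcup_{i<\mu} P \downarrow p_i\bigr| \le \mu \cdot \kappa = \kappa$.

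For the inductive step, assume the statement for $n$ and let $P$ have breadth at most $n+2$. I would establish \ref{thm:Ditor-2} first. Let $I$ be a proper ideal, fix $q \in P \setminus I$, and consider the join-homomorphism $\phi \colon I \to P \uparrow q$ defined by $\phi(x) = x \vee q$, with image $S = \phi(I)$. The key claim is that $S$ has breadth at most $n+1$: given $n+2$ elements $y_k = x_k \vee q$ of $S$ with each $x_k \in I$, apply the breadth bound of $P$ to the $n+3$ elements $x_1, \dots, x_{n+2}, q$; if the redundant element were $q$, then $q \le \bigvee_k x_k \in I$, contradicting $q \notin I$, so some $x_j$ must be redundant, which reduces $\bigvee_k y_k$ to a join of $n+1$ of the $y_k$. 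Since $S$ is a join-subsemilattice of $P$ its principal ideals have cardinality $< \kappa$, and each fibre $\phi^{-1}(v) \subseteq P \downarrow v$ has cardinality $< \kappa$; so the induction hypothesis \ref{thm:Ditor-1} applied to $S$ gives $|I| \le |S| \cdot \kappa \le \kappa^{+n} \cdot \kappa = \kappa^{+n} < \kappa^{+(n+1)}$, as required.

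Finally, I would deduce \ref{thm:Ditor-1} for $P$ from the \ref{thm:Ditor-2} just proved. If $P$ has a greatest element it is a principal ideal and $|P| < \kappa$; otherwise every principal ideal is proper and I would exhaust $P$ by a continuous increasing chain of proper ideals $\langle I_\alpha \rangle$, each $I_{\alpha+1}$ being the ideal generated by $I_\alpha$ together with one new element $x_\alpha \notin I_\alpha$. Every proper $I_\alpha$ has cardinality $\le \kappa^{+n}$ by \ref{thm:Ditor-2}, which simultaneously caps the length of the chain at $\kappa^{+(n+1)}$ (a longer chain would produce a proper ideal containing $\kappa^{+(n+1)}$ distinct elements) and bounds each successor increment by $\kappa^{+n}$, yielding $|P| \le \kappa^{+(n+1)} \cdot \kappa^{+n} = \kappa^{+(n+1)}$. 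The main obstacle, and the place where properness is essential, is the breadth-drop claim for $S$: this is precisely the step that turns breadth $n+2$ into breadth $n+1$ and thereby powers the whole induction.
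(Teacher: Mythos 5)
The paper itself contains no proof of this theorem: it is stated as Ditor's result with a citation to \cite{MR0732199} and used as a black box, so there is no in-paper argument to compare against. Judged on its own merits, your proof is correct, and its induction structure is sound: your (b) at parameter $n+1$ uses only (a) at parameter $n$, and (a) at parameter $n+1$ then follows from (b) at parameter $n+1$, with both halves verified at $n=0$. The crux---that the image $S$ of a proper ideal $I$ under $x \mapsto x \vee q$ has breadth at most $n+1$---is argued correctly: the breadth hypothesis applied to the $(n+3)$-element set $\{x_1, \dots, x_{n+2}, q\}$ yields a redundant element, and it cannot be $q$, since $q \le \bigvee_k x_k \in I$ would put $q$ in $I$ by downward closure. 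The fibre bound $\phi^{-1}(v) \subseteq P \downarrow v$ then gives $|I| \le |S| \cdot \kappa \le \kappa^{+n}$, which is exactly the strict inequality $|I| < \kappa^{+(n+1)}$ required.

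Three small points deserve tightening, none a genuine gap. First, in the breadth-drop claim, dispose of the degenerate case up front: if the $y_k$ are not pairwise distinct the reduction is trivial, and if they are distinct then so are the $x_k$ (since $\phi(x_k) = y_k$) and $q \neq x_k$ (since $q \notin I$), so the set genuinely has $n+3$ elements before the breadth hypothesis is invoked. Second, in the chain exhaustion, the ideal generated by $I_\alpha \cup \{x_\alpha\}$, or the union at a limit stage, may fail to be proper, in which case (b) does not apply to it; but then the construction simply halts with $I_\lambda = P$, and the bound survives: every element of the generated ideal lies below $x_\alpha$ or below $z \vee x_\alpha$ for some $z \in I_\alpha$, so $|I_{\alpha+1}| \le \max(|I_\alpha|, \kappa)$, while a limit stage of length $\kappa^{+(n+1)}$ with proper union would contradict (b)---which is precisely your cap on the length. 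Either way $|P| \le \kappa^{+(n+1)} \cdot \kappa^{+n} = \kappa^{+(n+1)}$. Third, in the base case the cofinal sequence should be taken strictly increasing, so that $\{p_j \mid j < i\}$ injects into $P \downarrow p_i$ and $|i| < \kappa$ follows. With these routine repairs your argument is a complete, self-contained proof of the theorem that the paper only cites.
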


As an application of Theorem~\ref{thm:Ditor}, let us prove the following proposition, which states that every join-semilattice witnessing the sharpness of Theorem~\ref{thm:Ditor}\ref{thm:Ditor-1} for some $n$ and some cardinal $\kappa$, must be $\mathrm{cf}(\kappa)$-directed. Recall that a poset $P$ is said to be \emph{$\mu$-directed}, for some infinite cardinal $\mu$, if every subset of $P$ of cardinality $< \mu$ has an upper bound.

\begin{proposition}\label{prop:directed}
Given some $n\in\omega$ and an infinite cardinal $\kappa$, every join-semilattice of cardinality $\kappa^{+n}$, breadth \fxnote*{}{at most} $n+1$, and whose principal ideals have cardinality $ <\kappa$ is $\mathrm{cf}(\kappa)$-directed.
\end{proposition}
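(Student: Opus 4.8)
The plan is to argue by contradiction, reducing the cardinality of $P$ below $\kappa^{+n}$ by passing to a well-chosen quotient and invoking Theorem~\ref{thm:Ditor}. First I would dispose of the case $\mathrm{cf}(\kappa)=\aleph_0$: there every subset of size $<\mathrm{cf}(\kappa)$ is finite and hence bounded by its join, so $P$ is trivially $\mathrm{cf}(\kappa)$-directed. Thus I may assume $\mathrm{cf}(\kappa)>\aleph_0$. Supposing $P$ were not $\mathrm{cf}(\kappa)$-directed, fix $S\subseteq P$ with $|S|<\mathrm{cf}(\kappa)$ and no upper bound (so $S$ is infinite), let $D$ be the set of nonempty finite joins of elements of $S$, and let $I={\downarrow}D$ be the ideal it generates. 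Then $|D|=|S|<\mathrm{cf}(\kappa)$, the set $D$ still has no upper bound, and $I$ is a union of $<\mathrm{cf}(\kappa)$ principal ideals each of size $<\kappa$; since a union of fewer than $\mathrm{cf}(\kappa)$ sets of size $<\kappa$ has size $<\kappa$, we get $|I|<\kappa$, and $I$ is proper because $|P|=\kappa^{+n}\ge\kappa>|I|$.

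Form the quotient $Q=P/I$ with quotient map $\pi_I$, recalling that $I$ is exactly the least congruence class, i.e. the least element $\mathbf{0}_Q=\pi_I[I]$ of $Q$; in particular $\pi_I(d)=\mathbf{0}_Q$ for every $d\in D$. The crux of the proof---and the step I expect to demand the most care---is to show that collapsing the unbounded ideal $I$ to $\mathbf{0}_Q$ drops the breadth by one, that is, $Q$ has breadth at most $n$. Suppose otherwise. Unwinding the definition of breadth, there are $x_0,\dots,x_n\in P$ whose images form an irredundant family, meaning $\pi_I(x_k)\not\le\bigvee_{i\ne k}\pi_I(x_i)$ for every $k\le n$. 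Put $b=\bigvee_{i\le n}x_i$. Since $b$ is not an upper bound of $D$, choose $s\in D$ with $s\not\le b$. Breadth $\le n+1$ applied in $P$ to the $(n+2)$-element family $\{x_0,\dots,x_n,s\}$ expresses $b\vee s$ as the join of at most $n+1$ of these elements; this subfamily must contain $s$ (otherwise $b\vee s\le b$, forcing $s\le b$), hence it omits some $x_{i^*}$, which gives $x_{i^*}\le\bigvee_{i\ne i^*}x_i\vee s$. Applying $\pi_I$ and using $\pi_I(s)=\mathbf{0}_Q$ yields $\pi_I(x_{i^*})\le\bigvee_{i\ne i^*}\pi_I(x_i)$, contradicting irredundancy. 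Hence $Q$ has breadth at most $n$.

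It then remains to bound cardinalities. A direct computation identifies the down-set of $\pi_I(p)$ in $Q$ with the $\pi_I$-image of $\bigcup_{d\in D}{\downarrow}(p\vee d)$, and each fiber $\pi_I^{-1}(\pi_I(p))$ with a subset of the same union; being a union of $<\mathrm{cf}(\kappa)$ principal ideals of $P$, this set has size $<\kappa$, so $Q$ has principal ideals of size $<\kappa$ and all fibers of $\pi_I$ have size $<\kappa$. For $n\ge 1$, Theorem~\ref{thm:Ditor} applied to $Q$ (with $n-1$ in place of $n$) gives $|Q|\le\kappa^{+(n-1)}$; summing the fiber sizes over $Q$ then yields $|P|\le|Q|\cdot\kappa\le\kappa^{+(n-1)}<\kappa^{+n}$, contradicting $|P|=\kappa^{+n}$. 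For $n=0$, the breadth-$0$ semilattice $Q$ is a single point, so $P=I$ has size $<\kappa=\kappa^{+0}$, again a contradiction. Therefore $P$ is $\mathrm{cf}(\kappa)$-directed.
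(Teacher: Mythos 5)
Your proof is correct and is essentially the paper's argument run in contrapositive: both pass to the quotient by the ideal generated by the given small set, bound its fibers and principal ideals by $<\kappa$ as unions of fewer than $\mathrm{cf}(\kappa)$ principal ideals, invoke Theorem~\ref{thm:Ditor}, and perform the same $(n+2)$-element breadth manipulation in $P$ with one element collapsing to $\mathbf{0}$ in the quotient. The only difference is organizational: the paper argues directly (the quotient keeps cardinality $\kappa^{+n}$ by regularity, so by Ditor it has breadth $>n$, and the join of a lifted breadth witness bounds the arbitrary set $A$), whereas you assume an unbounded $S$, show the breadth of the quotient drops to $\le n$, and contradict $|P|=\kappa^{+n}$ via $|P|\le|Q|\cdot\kappa\le\kappa^{+(n-1)}$.
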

\begin{proof}

Let $P$ be a join-semilattice of cardinality $\kappa^{+n}$, breadth at most $n+1$, and whose principal ideals have cardinality  $< \kappa$. Fix a subset $A\subseteq P$ of cardinality $< \mathrm{cf}(\kappa)$, towards showing that $A$ has an upper bound.

If $n = 0$, the claim is easy. Indeed, in this case, $P$ is a linear order, and, since every principal ideal has cardinality $ < \kappa$, $A$ must be bounded from above in $P$.

So assume $n > 0$. Let $J$ be the ideal generated by $A$, i.e. $$J \coloneqq \bigcup \Big\{{\big\downarrow} \bigvee F \bigm| F \in [A]^{<\omega}\Big\}.$$ 

Let us focus on the quotient $P/J$. Let us first observe that every principal ideal of $P/J$ has cardinality  $< \kappa$. Indeed, for every $p\in P$,
\begin{equation}\label{eq:directed}
{\downarrow} [p]_J = \pi_J \Big[\bigcup \big\{{\big\downarrow} \bigvee \big(F \cup \{p\}\big) \bigm| F \in [A]^{<\omega}\big\}\Big]
\end{equation}
and the cardinality of the set on the right-hand side of \eqref{eq:directed} is  $< \kappa$, being the union of  $< \mathrm{cf}(\kappa)$-many sets of cardinality $< \kappa$. \textit{A fortiori}, each class $[p]_J$ has cardinality $< \kappa$, and therefore, by the regularity of $\kappa^{+n}$, $|P/J| = |P| = \kappa^{+n}$.

By Ditor's Theorem~\ref{thm:Ditor}\ref{thm:Ditor-1}, the join-semilattice $P / J$ \fxnote*{}{has breadth greater than $n$}. Hence, we can fix a set $X \in [P / J]^{n+1}$ such that for all $Y \in [X]^n$, $\bigvee Y \neq \bigvee X$. Fix $K \in [P]^{n+1}$ such that $\pi_J[K] = X$. We now show that $\bigvee K$ is an upper bound of $A$.

Fix any $p \in A$. It follows from our assumption on $X$ and from the definition of the quotient join that, for every $L \in [K]^{n}$,
\begin{equation*}
\begin{split}
\pi_J\left(p \vee \bigvee K\right) = \pi_J(p) \vee \bigvee \pi_J[K] &= \bigvee X\\ &\neq \bigvee \pi_J[L]\\ 
&= \pi_J\left(\bigvee L\right) \vee \pi_J(p) = \pi_J\left(p \vee \bigvee L\right)
\end{split}
\end{equation*}
In particular, $p \vee \bigvee K \neq p \vee \bigvee L$ for every $L \in [K]^n$.
Consequently, $p\not\in K$ or, equivalently, $K \cup \{p\}$ has size $n+2$. Since $P$ has breadth at most $n+1$ by hypothesis, at least one of the following must hold:
\begin{enumerate}[label={(\alph*)}]
\itemsep0.3em
\item\label{item:directed} $\bigvee K = p \vee \bigvee K$.
\item There is an $L \in [K]^n$ such that $p \vee \bigvee L = p \vee \bigvee K$
\end{enumerate}
Only \ref{item:directed} is possible by our previous considerations. Thus, we have shown that for every $p \in A$, $\bigvee K = p \vee \bigvee K$ or, equivalently, that $\bigvee K$ is an upper bound of $A$.
\end{proof}

\section{Generalizing ladders}\label{sec:generalladder}

An \emph{$n$-ladder} is a lower finite lattice whose elements have at most $n$ lower covers. For lower finite lattices, the property of having at most $n$ lower covers for each element is (strictly) stronger than the property of having breadth at most $n$ \cite[Proposition 4.1]{MR0732199}. However, this implication does not hold in general for join-semilattices that are not lower finite. For example, consider the set of rational numbers $\mathbb{Q}$ with its usual ordering: it is, in particular, a join-semilattice of breadth $1$, yet no rational number has a lower cover. 

In this section, we present a notion that generalizes the notion of $n$-ladders by encompassing (join-semi)lattices that are not lower finite, while retaining all the main features of $n$-ladders. This notion is being \emph{$\MJ{(n+1)}$-free}.

Given a set $X$, we slightly modify\footnote{The correct notation would be $\overline{X}_{\scriptscriptstyle\top}$, but it quickly becomes cumbersome when the underlying set (i.e. $X$) has a long expression.} Davey and Priestley's notation \cite{MR1902334} and denote by $\MJ{X}$ the join-semilattice whose domain is $X \sqcup \{\mathbf{1}\}$ and such that $x \vee y = \mathbf{1}$ for all distinct $x,y \in X \sqcup \{\mathbf{1}\}$. For example, given an $n\in\omega$, the Hasse diagram of $\MJ{n}$ is depicted in Figure 1.

\begin{figure}[H]
\centering
\begin{tikzpicture}
    \node [draw, shape = circle, fill = black, minimum size = 0.1cm, inner sep=0pt, label={$ \mathbf{1}$}] at (0,1) (t){};
  
    \node [draw, shape = circle, fill = black, minimum size = 0.1cm, inner sep=0pt, label={[yshift=-0.5cm]$\scriptstyle 0$}] at (-2,0) (a2){};
    \node [draw, shape = circle, fill = black, minimum size = 0.1cm, inner sep=0pt, label={[yshift=-0.5cm]$\scriptstyle 1$}] at (-1.2,0) (a3){};
    \node [draw, shape = circle, fill = black, minimum size = 0.1cm, inner sep=0pt, label={[yshift=-0.5cm]$\scriptstyle 2$}] at (-0.4,0) (a4){};
\node [draw, shape = circle, fill = black, minimum size = 0.1cm, inner sep=0pt, label={[yshift=-0.52cm]$\scriptstyle n-2$}] at (1,0) (a5){};
\node [] at (0.3,0) (){$\ldots$};
    \node [draw, shape = circle, fill = black, minimum size = 0.1cm, inner sep=0pt, label={[yshift=-0.5cm]$\scriptstyle n-1$}] at (2,0) (a6){};
    
    \draw[ultra thin] (a2)--(t);
    \draw[ultra thin] (a3)--(t);
    \draw[ultra thin] (a4)--(t);
    \draw[ultra thin] (a5)--(t);
    \draw[ultra thin] (a6)--(t);
\end{tikzpicture}

\caption{Hasse diagram of $\MJ{n}$}
\end{figure}
The join-semilattice $\MJ{n}$ is the unique, up to isomorphism, join-semilattice of cardinality $n+1$ and length\footnote{The length of a poset $P$ is defined as $\sup \{|C|-1 \mid C \text{ is a chain of }P\}$.} at most $1$. We call a join-semilattice \emph{$\MJ{n}$-free} if it does not have a join-subsemilattice isomorphic to $\MJ{n}$. \fxnote*{}{Observe that a join-semilattice $P$ is $\MJ{n}$-free if and only if
\begin{multline*}
(x_i \vee x_j = x \text{ for all distinct } i,j\in \{0,1, \dots, n-1\}) \\ \Rightarrow (x_i = x \text{ for some } i \in \{0,1,\dots, n-1\})
\end{multline*}
whenever $x,x_0,x_1, \dots, x_{n-1} \in P$.}

Let us also introduce the notion of \emph{lower covering}, not to be confused with that of a lower cover. Its connection to $\MJ{n}$-free join-semilattices is the content of Proposition~\ref{prop:equivfree}.

\begin{definition}\label{def:lowercovering}
Given a join-semilattice $(P, \le)$ and an element $x \in P$, a \emph{lower covering of} $x$ is a set $\mathcal{I}$ of ideals of $P$ such that $\{y \in P \mid y < x\} = \bigcup \mathcal{I}$.
\end{definition}

Every upper-bounded ideal of a lower finite join-semilattice is a principal ideal. Therefore, if $P$ is a lower finite join-semilattice, then every lower covering of some $p \in P$ is made of principal ideals. \fxnote*{}{In particular, a lower finite join-semilattice with a least element is an $n$-ladder if and only if it is $\MJ{(n+1)}$-free. 

The next proposition trivially follows from \cite[Lemma 13.1]{MR3286148}, but we include a proof anyway for completeness.}

\begin{proposition}\label{prop:equivfree}
Given a join-semilattice $(P, \le)$ and $n\in\omega$, the following are equivalent:
\begin{enumerate}[label={\upshape (\arabic*)}]
\itemsep0.3em
\item\label{prop:equivfree1} Every element of $P$ has a lower covering of size at most $n$.
\item\label{prop:equivfree2} $P$ is $\MJ{(n+1)}$-free.
\end{enumerate}
\end{proposition}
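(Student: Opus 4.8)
The plan is to prove the equivalence by proving each direction separately, unpacking the definitions of \emph{lower covering} and \emph{$\MJ{(n+1)}$-free} and exploiting the combinatorial reformulation of $\MJ{n}$-freeness recorded just before the proposition.

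\medskip

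\noindent\textbf{From \ref{prop:equivfree1} to \ref{prop:equivfree2}.} First I would argue the contrapositive, or equivalently assume $P$ fails to be $\MJ{(n+1)}$-free and contradict the existence of small lower coverings. Suppose there are elements $x, x_0, \dots, x_n \in P$ witnessing the failure of $\MJ{(n+1)}$-freeness: that is, $x_i \vee x_j = x$ for all distinct $i,j \in \{0,\dots,n\}$, while $x_i \neq x$ for every $i$. Note each $x_i < x$ (since $x_i \vee x_j = x$ forces $x_i \le x$, and $x_i \neq x$). Now let $\mathcal{I} = \{I_0, \dots, I_{m-1}\}$ be any lower covering of $x$ with $m \le n$; so $\{y \in P : y < x\} = \bigcup_{k<m} I_k$, and each $x_i$ lies in some member of $\mathcal{I}$. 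Since there are $n+1$ elements $x_0, \dots, x_n$ but at most $n$ ideals, by pigeonhole two distinct indices $i \neq j$ satisfy $x_i, x_j \in I_k$ for a common $k$. But $I_k$ is an ideal, hence closed under binary joins, so $x = x_i \vee x_j \in I_k \subseteq \{y : y < x\}$, giving $x < x$, a contradiction. Thus no lower covering of $x$ can have size $\le n$, contradicting \ref{prop:equivfree1}.

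\medskip

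\noindent\textbf{From \ref{prop:equivfree2} to \ref{prop:equivfree1}.} Here I would construct an explicit lower covering of size at most $n$ for an arbitrary $x \in P$. The natural candidate is to cover $\{y : y < x\}$ by principal-style ideals indexed by the elements strictly below $x$, grouped so as to use few ideals. The cleanest approach: for each $y < x$, consider $P \downarrow y = {\downarrow} y$, which is an ideal and is contained in $\{z : z < x\}$ precisely because $y < x$ and ${\downarrow} y$ avoids $x$ (as $x \not\le y$). Trivially $\{y : y < x\} = \bigcup_{y < x} {\downarrow} y$, so this is always \emph{a} lower covering, but possibly of large size. The point of $\MJ{(n+1)}$-freeness is to collapse this to size $\le n$. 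I would define an equivalence-like relation or a selection of at most $n$ maximal ideals among the ${\downarrow} y$ whose union still captures all $y < x$; concretely, one shows that the set $\{y : y < x\}$, as a downward-closed subset of $P$, is the union of at most $n$ ideals. The key lemma to extract is: if $D$ is the set of elements strictly below $x$, then $D$ cannot be covered by fewer than (size of a lower covering) ideals, and $\MJ{(n+1)}$-freeness bounds the minimal number by $n$ via the reformulated implication — if every decomposition needed $n+1$ ``independent'' pieces one could pick witnesses $x_0, \dots, x_n$ each forcing a fresh ideal and with pairwise joins equal to $x$.

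\medskip

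\noindent\textbf{Main obstacle.} The forward direction \ref{prop:equivfree1}$\Rightarrow$\ref{prop:equivfree2} is a clean pigeonhole argument and I expect it to be routine. The hard part will be \ref{prop:equivfree2}$\Rightarrow$\ref{prop:equivfree1}: producing an \emph{actual} lower covering of size $\le n$, rather than merely bounding an abstract covering number. The difficulty is that ideals of a general (not lower finite) join-semilattice need not be principal, so I cannot simply pick $n$ elements $y_0, \dots, y_{n-1}$ and set $I_k = {\downarrow} y_k$; the covering ideals may have to be genuinely non-principal upper portions of $D = \{y : y < x\}$. I anticipate the right construction uses a maximal family of pairwise-incompatible (in the sense that their pairwise joins equal $x$) ideals below $x$, invoking $\MJ{(n+1)}$-freeness in the reformulated contrapositive form to cap this family at $n$ members, and then checking their union is all of $D$. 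Verifying that the chosen ideals are genuinely ideals (downward closed and join-closed) and that their union omits nothing below $x$ is where the care lies; a Zorn's lemma argument to obtain maximal such ideals may be needed if $P$ is infinite.
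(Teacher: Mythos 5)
Your first direction is correct and is exactly the paper's proof: apply the pigeonhole principle to a lower covering of size at most $n$ of the top of a copy of $\MJ{(n+1)}$, and use join-closure of the offending ideal to force the top element strictly below itself. No issues there.

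The second direction, however, has a genuine gap: you describe what a covering ought to look like but never construct one, and the two devices you lean on would fail as stated. First, your proposed ``key lemma''---that if $\{y \in P \mid y < x\}$ cannot be covered by $n$ ideals then one can pick $n+1$ witnesses with pairwise joins equal to $x$---is unjustified: elements forced into distinct ideals of a covering need not join to $x$, so a copy of $\MJ{(n+1)}$ cannot be extracted that way. Second, Zorn's lemma gives the wrong kind of maximality. The paper's construction is as follows: let $m$ be the \emph{greatest} natural number such that some join-embedding $\phi: \MJ{m} \hookrightarrow P$ has $\phi(\mathbf{1}) = x$; such a maximum exists outright because $\MJ{(n+1)}$-freeness caps $m$ at $n$, so no Zorn argument is needed. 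Then set $I_k \coloneqq \{y \in P \mid y \vee \phi(k) < x\}$ for $k < m$; these are downward closed by construction, and the two nontrivial checks both exploit maximality of $m$ via an extension/replacement trick. For join-closure: if $y_0, y_1 \in I_k$ had $y_0 \vee y_1 \vee \phi(k) = x$, then replacing $\phi(k)$ by the \emph{two} elements $y_0 \vee \phi(k)$ and $y_1 \vee \phi(k)$ yields a join-embedding of $\MJ{(m+1)}$ with top $x$, a contradiction. Note that the new family is not a superset of the old one---this is precisely why an inclusion-maximal (Zorn-style) family of elements or ideals would not survive this step; what is needed is a family of maximum \emph{cardinality}, available here only because freeness makes the sizes finite and bounded. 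For the covering property: if some $y < x$ avoided every $I_k$, then $y \vee \phi(k) = x$ for all $k < m$, so adjoining $y$ extends $\phi$ to a join-embedding of $\MJ{(m+1)}$ with top $x$, again contradicting maximality. Your sketch contains the germ of this idea (elements with pairwise joins equal to $x$), but without the maximum-size embedding and the replacement argument for join-closure, the hard half of the proposition is not proved.
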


\begin{proof}
\ref{prop:equivfree1}$\Rightarrow$\ref{prop:equivfree2}: Suppose that every element of $P$ has a lower covering of size at most $n$ and assume, towards a contradiction, that $\phi: \MJ{(n+1)} \hookrightarrow P$ is a join-embedding. By assumption, we can pick a lower covering $\mathcal{I}$ of $\phi(\mathbf{1})$ of size at most $n$. By the pigeonhole principle, there must be some $I \in \mathcal{I}$ and two distinct $i,j \le n$ such that $\phi(i), \phi(j) \in I$. Since $I$ is an ideal, $\phi(i) \vee \phi(j) \in I$, but $\phi(i) \vee \phi(j) = \phi(i \vee j) = \phi(\mathbf{1})$, and hence $\phi(\mathbf{1}) \in I$, which is a contradiction, as it means that $\phi(\mathbf{1}) < \phi(\mathbf{1})$.\vspace{0.5em}

\ref{prop:equivfree2}$\Rightarrow$\ref{prop:equivfree1}: Suppose that $P$ is $\MJ{(n+1)}$-free. Pick any $x \in P$ and let $m$ be the greatest natural number such that there exists a join-embedding $\phi: \MJ{m} \hookrightarrow P$ with $\phi(\mathbf{1}) = x$. Clearly, $m \le n$. Fix a join-embedding  $\phi: \MJ{m} \hookrightarrow P$ with $\phi(\mathbf{1}) = x$.  Now, for each $k < m$ let
\[
I_k \coloneqq \{y \in P\mid y \vee \phi(k) < x\}.
\]

We claim that $\{I_k \mid k < m\}$ is a lower covering of $x$.  Let us first prove that the $I_k$s are ideals of $P$. They are all downward closed. We now show that $I_{m-1}$ is closed under joins, as the same argument applies to all $I_k$s. Towards a contradiction, suppose that there are $y_0, y_1 \in I_{m-1}$ such that $y_0 \vee  y_1 \not\in I_{m-1}$, or equivalently, such that $y_0 \vee y_1 \vee \phi(m-1) = x$. Let $\varphi: \MJ{(m+1)} \rightarrow P$ be the map defined by: $\varphi(i) = \phi(i)$ for every $i < m-1$; $\varphi(m-1) = y_0 \vee \phi(m-1)$ and $\varphi(m) = y_1 \vee \phi(m-1)$. Then $\varphi$ is a join-embedding, against the maximality of $m$.

It remains to show that $\bigcup_{k < m} I_k = \{y \in P \mid y < x\}$. Pick some $y < x$ and assume, towards a contradiction, that $y \not\in I_k$ for every $k < m$. Equivalently, $y \vee \phi(k) = x$ for every $k < m$. We can extend $\phi$ to $\varphi: \MJ{(m+1)} \rightarrow P$ by setting $\varphi(k) = \phi(k)$ for every $k < m$ and $\varphi(m) = y$. Again, $\varphi$ is a join-embedding, against the maximality of $m$. 
\end{proof}

\begin{lemma}\label{lemma:breadth}
An $\MJ{(n+1)}$-free join-semilattice has breadth at most $n$.
\end{lemma}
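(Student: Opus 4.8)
The plan is to reduce to the equivalent formulation of breadth recorded in the preceding lemma: it suffices to show that for every $X \in [P]^{n+1}$ there is a $Y \in [X]^n$ with $\bigvee Y = \bigvee X$. So I would fix an arbitrary $X = \{a_0, \dots, a_n\} \in [P]^{n+1}$, set $s \coloneqq \bigvee X$, and look for a single element of $X$ that can be discarded without lowering the join.

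The key idea is to consider the $n+1$ ``leave-one-out'' joins. For each $i \in \{0, 1, \dots, n\}$ put $x_i \coloneqq \bigvee(X \setminus \{a_i\})$. For distinct $i,j$ one has $(X \setminus \{a_i\}) \cup (X \setminus \{a_j\}) = X$, so associativity and idempotence of $\vee$ give $x_i \vee x_j = \bigvee X = s$. Hence the elements $s, x_0, \dots, x_n$ satisfy $x_i \vee x_j = s$ for all distinct $i,j \in \{0, \dots, n\}$. At this point I would invoke the algebraic characterization of $\MJ{(n+1)}$-freeness recorded right after the definition: since $P$ is $\MJ{(n+1)}$-free, this very hypothesis forces $x_i = s$ for some $i$. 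For such an $i$ we have $\bigvee(X \setminus \{a_i\}) = s = \bigvee X$, so $Y \coloneqq X \setminus \{a_i\} \in [X]^n$ witnesses the desired equality, completing the argument.

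I do not expect any genuine obstacle here: the whole proof is a direct application of the stated characterization to the leave-one-out joins, the only computation being the one-line verification that $x_i \vee x_j = s$. Using the characterization in its algebraic form is what keeps things clean---it spares us from exhibiting an explicit join-embedding of $\MJ{(n+1)}$ and from separately checking that the relevant elements are distinct and pairwise incomparable. The one point worth noting is that the number of leave-one-out joins is exactly $n+1$, matching the index set $\{0,\dots,n\}$ in the $\MJ{(n+1)}$-free characterization; this is precisely what makes the cardinalities line up.
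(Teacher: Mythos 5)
Your proposal is correct and is essentially the paper's own proof: the paper likewise forms the leave-one-out joins $y_i = \bigvee_{j \neq i} x_j$, observes that $y_i \vee y_j = \bigvee X$ for distinct $i,j$, and applies the algebraic characterization of $\MJ{(n+1)}$-freeness to conclude $y_i = \bigvee X$ for some $i$. The only difference is cosmetic notation, so there is nothing to add or fix.
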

\begin{proof}
\fxnote*{}{Let $X = \{x_0,x_1, \dots, x_n\}$ be a subset of an $\MJ{(n+1)}$-free join-semilattice $P$. If we let $y_i = \bigvee_{j \neq i} x_j$, then clearly $y_i \vee y_j = \bigvee X$ for all distinct $i,j \le n$. By $\MJ{(n+1)}$-freeness, there exists $i \le n$ such that $y_i = \bigvee X$. Thus, $P$ has breadth at most $n$.}
\end{proof}

Suppose that an element of a join-semilattice has a finite lower covering. In that case, the following lemma tells us that, among the finite lower coverings of the given element, one exists that is least with respect to the inclusion relation. 

\begin{lemma}
Given a join-semilattice $(P, \le)$ and some $p\in P$, if $p$ has a finite lower covering, then there exists a \textup{(}unique\textup{)} lower covering $\mathcal{I}$ of $p$ such that $\mathcal{I} \subseteq \mathcal{J}$ for every finite lower covering $\mathcal{J}$ of $p$.
\end{lemma}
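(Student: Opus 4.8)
The plan is to show that the desired least lower covering is exactly
\[
\mathcal{I} := \{M \mid M \text{ is an ideal of } P,\ M \subseteq D,\ \text{and } M \text{ is maximal among the ideals contained in } D\},
\]
where $D := \{y \in P \mid y < p\}$. Note first that the ideals occurring in \emph{any} lower covering $\mathcal{J}$ of $p$ are precisely ideals contained in $D$: since $\bigcup \mathcal{J} = D$, each member of $\mathcal{J}$ is a subset of $D$. So the candidate $\mathcal{I}$ lives in the right universe.

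The key step---and the main obstacle---is the following pigeonhole observation: if $\mathcal{J} = \{J_0, \dots, J_{k-1}\}$ is a finite lower covering of $p$, then \emph{every} ideal $I \subseteq D$ is contained in a single $J_i$. The delicate point is that $D$ itself need not be an ideal (two elements below $p$ may join to $p$), so a purely set-theoretic cardinality argument is unavailable; instead one must exploit join-closure. Suppose $I \not\subseteq J_i$ for every $i < k$ and pick $a_i \in I \setminus J_i$. Then $a := a_0 \vee \dots \vee a_{k-1}$ lies in $I$, since $I$ is a join-subsemilattice, hence in $D = \bigcup_{i<k} J_i$, so $a \in J_j$ for some $j$; but $a_j \le a$ and $J_j$ is downward closed, forcing $a_j \in J_j$, a contradiction.

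From this observation everything else follows routinely. First I would deduce that $\mathcal{I}$ coincides with the set of inclusion-maximal members of any fixed finite lower covering $\mathcal{J}$: each maximal ideal $M \subseteq D$ is contained in some $J_i$, whence $M = J_i$ by maximality, and $J_i$ is then inclusion-maximal in $\mathcal{J}$; conversely an inclusion-maximal member of $\mathcal{J}$ admits no proper ideal extension inside $D$, since any such extension would again be contained in some $J_l$, contradicting its maximality in $\mathcal{J}$. In particular $\mathcal{I}$ is finite and independent of the chosen $\mathcal{J}$, and since every $J_i$ lies below some inclusion-maximal member of $\mathcal{J}$ we get $\bigcup \mathcal{I} = \bigcup \mathcal{J} = D$, so $\mathcal{I}$ is a lower covering of $p$. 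Applying the pigeonhole observation to an arbitrary finite lower covering $\mathcal{J}'$ shows that each $M \in \mathcal{I}$ is contained in, hence (by maximality) equal to, some member of $\mathcal{J}'$; that is, $\mathcal{I} \subseteq \mathcal{J}'$. Finally, uniqueness is immediate: any lower covering $\mathcal{I}'$ with the same minimality property satisfies $\mathcal{I}' \subseteq \mathcal{I}$ (taking $\mathcal{J} = \mathcal{I}$), is therefore finite, and so $\mathcal{I} \subseteq \mathcal{I}'$ as well, giving $\mathcal{I} = \mathcal{I}'$.
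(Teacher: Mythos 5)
Your proof is correct, and its engine is identical to the paper's: the pigeonhole-via-joins step (if an ideal $I \subseteq \{y \in P \mid y < p\}$ avoided every member of a finite lower covering $\mathcal{J}$, the join of witnesses $a_J \in I \setminus J$ would lie in $I$, hence in some $J \in \mathcal{J}$, forcing $a_J \in J$, a contradiction) appears essentially verbatim as the first claim in the paper's proof. Where you diverge is the scaffolding around that step. The paper argues extremally: it fixes a finite lower covering $\mathcal{I}$ that is $\subseteq$-minimal among finite lower coverings, then runs the pigeonhole step in both directions to get $I \subseteq J \subseteq I'$ with $I, I' \in \mathcal{I}$ and $J \in \mathcal{J}$, so that minimality of the family forces $I = I' = J \in \mathcal{J}$. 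You instead characterize the least covering intrinsically as the set of inclusion-maximal ideals contained in $\{y \in P \mid y < p\}$, and identify it with the set of inclusion-maximal members of any fixed finite lower covering---nicely, it is this identification (via finiteness of $\mathcal{J}$), not Zorn's lemma, that guarantees such maximal ideals exist at all, so no choice principle sneaks in. The two arguments are comparable in length; yours buys an explicit canonical description of the least lower covering, plus the slightly more general and reusable observation that \emph{every} ideal contained in $\{y \mid y < p\}$ (not just one belonging to some covering) sits inside a single member of any finite lower covering, while the paper's version gets by without naming the candidate in advance. Your handling of uniqueness and of the degenerate cases (e.g.\ minimal $p$, where the empty family covers) is also sound, given that the paper's ideals are by definition nonempty.
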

\begin{proof}
Fix a finite lower covering $\mathcal{I}$ of $p$ which is $\subseteq$-minimal---i.e. such that there is no finite lower covering $\mathcal{I}'$ of $p$ with $\mathcal{I}' \subsetneq \mathcal{I}$. We want to prove that $\mathcal{I} \subseteq \mathcal{J}$ for every finite lower covering $\mathcal{J}$ of $p$. To this end, fix $I \in \mathcal{I}$ and some finite \fxnote*{}{lower} covering $\mathcal{J}$ of $p$, towards showing that $I \in \mathcal{J}$. 

We first claim that there exists some $J \in \mathcal{J}$ such that $I \subseteq J$. Suppose otherwise, towards a contradiction, and for each $J \in \mathcal{J}$ pick $p_J \in I \setminus J$. Then $\bigvee \{p_J \mid J \in \mathcal{J}\}\in I$, since $\mathcal{J}$ is finite and $I$ is closed under joins. Moreover, since $\mathcal{J}$ is a lower covering of $p$, there exists $W \in \mathcal{J}$ such that $\bigvee \{p_J \mid J \in \mathcal{J}\}\in W$.  As $W$ is downward closed, we must have $ p_W \in W$, hence the contradiction.

So fix $J \in \mathcal{J}$ such that $I \subseteq J$. We now claim that $I = J$. By the same argument of the previous paragraph, an $I' \in \mathcal{I}$ exists such that $J \subseteq I'$. In particular, $I \subseteq J \subseteq I'$. If $I \subsetneq I'$, then we contradict the minimality of $\mathcal{I}$, as the family $\mathcal{I} \setminus \{I\}$ would still be a lower covering of $p$. Hence, $I = I'$, and therefore $I = J \in \mathcal{J}$.
\end{proof}

We are ready to introduce our main notion:

\begin{definition}\label{def:generalladder}
Given an $n\in\omega$ and an infinite cardinal $\kappa$, a nonempty join-semilattice (resp. lattice) is said to be an \emph{$(n, \kappa)$-semiladder} (resp. \emph{$(n, \kappa)$-ladder}) if it is $\MJ{(n+1)}$-free and its principal ideals have cardinality $< \kappa$.
\end{definition}

In particular, it directly follows from Proposition~\ref{prop:equivfree} and our remark after Definition~\ref{def:lowercovering} that the notion of $(n, \aleph_0)$-ladder coincides with the one of $n$-ladder. \fxnote*{}{Moreover, note that a well-founded semiladder need not be a ladder; for instance, $P := \omega \cup \{a,b,\mathbf{1}\}$, where $a$ and $b$ are incomparable, above every $n < \omega$, and below $\mathbf{1}$, is a well-founded $(2,\aleph_1)$-semiladder but it is not a lattice.
}

The notion of being $\MJ{n}$-free behaves as expected with respect to products: for every two join-semilattices $P$ and $Q$ that are $\MJ{n}$-free and $\MJ{m}$-free, respectively, their product $P\times Q$ (with the product ordering) is $\MJ{(n+m-1)}$-free. Therefore, the product of an $(n, \kappa)$-semiladder with an $(m, \lambda)$-semiladder is an $(n+m, \max(\kappa, \lambda))$-semiladder. However, in the following sections, we will be interested in combining an $(n, \kappa)$-semiladder with an $(m, \lambda)$-semiladder to produce an $(n+m, \min(\kappa, \lambda))$-semiladder. This is done by using the following weaker notion of product.

\begin{definition}\label{def:quasi-product}
Let $(X, \vee)$ and $(Y, \vee)$ be join-semilattices. A join-semilattice $(X \times\nobreak Y, \vee)$ is said to be a \emph{quasi-product} of $X$ and $Y$ if the following conditions hold:
\begin{enumerate}[label={\upshape (q\arabic*)}]
\itemsep0.3em
\item\label{quasi-product-1} For every $x \in X$, the map $F_x : Y \hookrightarrow X\times Y, \ y \mapsto (x, y)$ is an order-embedding.
\item\label{quasi-product-2} The canonical projection $\pi_X : X\times Y \rightarrow X$ is a join-homomorphism.
\end{enumerate}
\end{definition}

Note that if we were to substitute \ref{quasi-product-1} with the stronger requirement ``The canonical projection $\pi_Y$ is a join-homomorphism,'' then we would end up with the standard definition of product join-semilattice. 

\begin{lemma}\label{lemma:quasi-product}
For all join-semilattices $X, Y$, for every $x \in X$ and every quasi-product of $X$ and $Y$, the map $F_x$ is a join-embedding.
\end{lemma}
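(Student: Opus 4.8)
The plan is to prove that $F_x$ is a join-embedding by establishing its two defining properties separately: injectivity, which is immediate, and the join-homomorphism identity, which carries all the content. Injectivity comes for free, since axiom \ref{quasi-product-1} already asserts that $F_x$ is an order-embedding, and every order-embedding is injective. Thus the entire task reduces to verifying that
$F_x(y) \vee F_x(y') = F_x(y \vee y')$ for all $y, y' \in Y$, where the join on the left is computed in the quasi-product $X \times Y$ and the join inside the parentheses on the right is computed in $Y$.

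First I would use axiom \ref{quasi-product-2} to pin down the first coordinate of the ambient join. Since $\pi_X$ is a join-homomorphism, $\pi_X\big((x,y) \vee (x,y')\big) = \pi_X(x,y) \vee \pi_X(x,y') = x \vee x = x$, so the join $(x,y)\vee(x,y')$ lies in the fiber $\{x\} \times Y$; write it as $(x,z) = F_x(z)$ for a unique $z \in Y$. The problem is now reduced to showing $z = y \vee y'$.

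Next I would squeeze $z$ between $y \vee y'$ and itself, invoking the order-embedding property \ref{quasi-product-1} in both directions. For one inequality: $(x,z)$ is an upper bound of $\{(x,y),(x,y')\}$, so $F_x(y) \le F_x(z)$ and $F_x(y') \le F_x(z)$; reflecting the order gives $y \le z$ and $y' \le z$, whence $y \vee y' \le z$. For the reverse: isotonicity of $F_x$ shows that $F_x(y \vee y') = (x, y \vee y')$ is an upper bound of $\{(x,y),(x,y')\}$ in the quasi-product, so the least such upper bound satisfies $(x,z) \le (x, y \vee y')$; reflecting the order once more yields $z \le y \vee y'$. Hence $z = y \vee y'$, so $(x,y) \vee (x,y') = F_x(y \vee y')$, i.e. $F_x$ is a join-homomorphism, and being injective it is a join-embedding.

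The argument is short and each individual step is routine, so the crux is conceptual rather than computational: it is the \emph{interplay} of the two axioms. Axiom \ref{quasi-product-2} is precisely what confines the ambient join to the single fiber $\{x\} \times Y$, and only once it has been so confined can the order-embedding property of \ref{quasi-product-1} be applied in both directions to identify $z$ with $y \vee y'$. Neither axiom alone suffices, and keeping clear which join lives in which structure is the one place where care is needed.
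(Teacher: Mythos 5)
Your proof is correct and follows essentially the same route as the paper: use \ref{quasi-product-2} to pin the join of $(x,y)$ and $(x,y')$ to the fiber $\{x\}\times Y$, then use the order-embedding property \ref{quasi-product-1} to identify the second coordinate with $y \vee y'$. The only difference is that you spell out, via the two-sided squeezing argument, the step the paper dismisses with ``it immediately follows from \ref{quasi-product-1},'' which is a harmless (indeed helpful) expansion of the same argument.
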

\begin{proof}
Fix a quasi-product $(X\times Y, \vee)$ of $X$ and $Y$. Fix also $x\in X$ and $y, z \in Y$, towards showing that $(x, y) \vee (x, z) = (x, y \vee z)$.  Let $(p,q) \in X\times Y$ be such that $(x, y) \vee (x, z) = (p, q)$. From \ref{quasi-product-2}, it follows that $x = p$. But then, it immediately follows from \ref{quasi-product-1} that $q = y \vee z$.
\end{proof}

The following proposition shows that the notion of being $\MJ{n}$-free also behaves well with respect to quasi-products. 

\begin{proposition}\label{prop:quasi-productfree}
Let $X$ and $Y$ be join-semilattices and let $n,m$ be positive integers such that $X$ and $Y$ are $\MJ{n}$-free and $\MJ{m}$-free, respectively. Then every quasi-product of $X$ and $Y$ is $\MJ{(n+m-1)}$-free.
\end{proposition}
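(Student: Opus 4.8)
The plan is to argue by contraposition, using the elementwise characterization of $\MJ{k}$-freeness recorded just after the definition of $\MJ{n}$-free, together with the built-in asymmetry of the quasi-product. Suppose some quasi-product $(X \times Y, \vee)$ is \emph{not} $\MJ{(n+m-1)}$-free. Then there exist elements $z$ and $z_0, \dots, z_{n+m-2}$ with $z_i \vee z_j = z$ for all distinct $i,j$ and $z_i \neq z$ for every $i$; write $z = (a,b)$ and $z_i = (a_i, b_i)$. The goal is to manufacture from this data either a witness to the failure of $\MJ{n}$-freeness in $X$ or a witness to the failure of $\MJ{m}$-freeness in $Y$, obtaining a contradiction in either case.

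First I would push everything down to the first coordinate via the projection $\pi_X$, which is a join-homomorphism by \ref{quasi-product-2}. Applying $\pi_X$ to $z_i \vee z_j = z$ yields $a_i \vee a_j = a$ for all distinct $i,j$. Now invoke $\MJ{n}$-freeness of $X$: no $n$ of the indices can simultaneously satisfy $a_i \neq a$, since those $n$ coordinates would realize a configuration $x_0,\dots,x_{n-1}$ pairwise joining to $a$ with none equal to $a$, contradicting the characterization. Hence at most $n-1$ indices $i$ satisfy $a_i \neq a$.

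The pigeonhole count is the crux, and it is precisely where the threshold $n+m-1$ enters: among the $n+m-1$ indices, at least $(n+m-1)-(n-1) = m$ of them satisfy $a_i = a$. Restricting attention to $m$ such indices $j_1, \dots, j_m$, all the corresponding points $z_{j_1},\dots,z_{j_m}$ lie in the fiber $\{a\} \times Y$. Here I would apply Lemma~\ref{lemma:quasi-product}: since $F_a$ is a join-embedding, the relation $z_{j_k} \vee z_{j_l} = z$ translates into $b_{j_k} \vee b_{j_l} = b$ for all distinct $k,l$. By $\MJ{m}$-freeness of $Y$, some $b_{j_k}$ equals $b$, whence $z_{j_k} = (a,b) = z$, contradicting $z_{j_k} \neq z$.

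The main obstacle---indeed, essentially the only delicate point---is the asymmetry of the quasi-product: only $\pi_X$ is guaranteed to be a join-homomorphism, so joins in the $Y$-coordinate are not controlled globally, and a naive coordinatewise argument fails. This dictates the order of operations: one must first collapse the $X$-coordinate using \ref{quasi-product-2}, and only afterwards pass to a single fiber $\{a\}\times Y$, where Lemma~\ref{lemma:quasi-product} restores join-compatibility. Making the two freeness bounds add up correctly is what forces the value $n+m-1$ rather than $n+m$.
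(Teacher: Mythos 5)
Your proof is correct and takes essentially the same route as the paper's: both argue by contraposition, collapse the first coordinate via \ref{quasi-product-2}, split the $n+m-1$ indices according to whether that coordinate equals $\pi_X(z)$, and handle the fiber $\{a\}\times Y$ through Lemma~\ref{lemma:quasi-product}, with the same pigeonhole count forcing $n+m-1$. The only cosmetic difference is that you run the two freeness applications through the elementwise characterization of $\MJ{k}$-freeness, whereas the paper packages the identical split as a pair of explicit join-embeddings $\varphi_X\colon \MJ{a}\hookrightarrow X$ and $\varphi_Y\colon \MJ{b}\hookrightarrow Y$ with $a+b=n+m-1$.
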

\begin{proof}
\fxnote*{}{Denote the positive integer $n+m-1$ by $k$. By contraposition, suppose that there exist a quasi-product $(X\times Y, \vee)$ of $X$ and $Y$ and a join-embedding $\varphi: \MJ{k} \hookrightarrow X\times Y$, towards finding $a,b\in\omega$ and two join-embeddings $\varphi_X: \MJ{a}\hookrightarrow X$ and $\varphi_Y: \MJ{b}\hookrightarrow Y$ such that $a+b = k$. In particular, it would follow that either $a \ge n$ or $b \ge m$, which would imply that either $X$ is not $\MJ{n}$-free or $Y$ is not $\MJ{m}$-free.}

Let 
\[
B \coloneqq \big\{l < k\mid \pi_X \circ \varphi(l) = \pi_X \circ \varphi(\mathbf{1})\big\}.
\]
By treating $\MJ{B}$ and $\MJ{(k\setminus B)}$ as join-subsemilattices of $\MJ{k}$, we define the maps $\varphi_X : \MJ{(k\setminus B)}\rightarrow X$ and $\varphi_Y:\MJ{B} \rightarrow Y$  as $\pi_X \circ \varphi \upharpoonright \MJ{(k\setminus B)}$ and $\pi_Y \circ \varphi \upharpoonright \MJ{B}$, respectively. We claim that both maps are join-embeddings, which suffices to finish the proof (by setting $b = |B|$ and $a = k - b = |k \setminus B|$).

By definition of $B$, we have $\pi_Y \circ \varphi \upharpoonright \MJ{B} = F_{\pi_X \circ \varphi(\mathbf{1})}^{-1} \circ \varphi \upharpoonright \MJ{B}$. Since $\varphi$ is a join-embedding by hypothesis, and $F_{\pi_X \circ \varphi(\mathbf{1})}^{-1} \upharpoonright \varphi[\MJ{B}]$ is a join-embedding by Lemma~\ref{lemma:quasi-product}, we conclude that $\varphi_Y$ is a join-embedding.

Since, by \ref{quasi-product-2}, $\pi_X$ is a join-homomorphism, the map $\varphi_X$ is a join-homomorphism. Therefore, in order to show that $\varphi_X$ is a join-embedding, it suffices to prove that  $\pi_X \upharpoonright \varphi[\MJ{(k\setminus B)}]$ is injective. Suppose otherwise. Then there must exist two distinct $i,j \in k\setminus B$ such that $\pi_X \circ \varphi(i) = \pi_X \circ \varphi(j)$. Then, by \ref{quasi-product-2}, $\pi_X(\varphi(i) \vee \varphi(j)) = \pi_X \circ \varphi(i)$, but $\varphi(i) \vee \varphi(j) = \varphi(\mathbf{1})$, and thus $\pi_X \circ \varphi(i) = \pi_X \circ \varphi(\mathbf{1})$ which contradicts $i\not\in B$. 
\end{proof}

\fxnote*{}{Furthermore, quasi-products preserve well-foundedness, an easy fact that will be useful in the next section.

\begin{lemma}\label{lemma:wellfoundedness}
Every quasi-product of two well-founded join-semilattices is well-founded.
\end{lemma}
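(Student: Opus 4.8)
The plan is to argue by contradiction, starting from a hypothetical infinite strictly descending chain in a quasi-product and pushing it down into one of the two factors. First I would record the two order-theoretic consequences of the quasi-product axioms that the argument needs. By \ref{quasi-product-2}, the projection $\pi_X$ is a join-homomorphism, hence isotone, so that $(x,y) \le (x',y')$ in $X\times Y$ forces $x \le x'$ in $X$. By \ref{quasi-product-1}, each map $F_x$ is an order-embedding, so that for a fixed first coordinate $x$ one has $(x,y) < (x,y')$ if and only if $y < y'$.

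Next I would take a putative infinite strictly descending chain
\[
(x_0,y_0) > (x_1,y_1) > (x_2,y_2) > \cdots
\]
in the quasi-product and apply $\pi_X$ to obtain a weakly descending chain $x_0 \ge x_1 \ge x_2 \ge \cdots$ in $X$. Since $X$ is well-founded, the nonempty set $\{x_n \mid n\in\omega\}$ has a $\le$-minimal element $x_N$; because the sequence is weakly decreasing, $x_n \le x_N$ for all $n \ge N$, and minimality then gives $x_n = x_N$ for all such $n$. In other words, the first coordinate stabilizes from index $N$ onward.

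Finally, writing $x \coloneqq x_N$, the tail of the chain has constant first coordinate, so each strict inequality $(x,y_n) > (x,y_{n+1})$ with $n \ge N$ translates, via the order-embedding $F_x$ supplied by \ref{quasi-product-1}, into $y_n > y_{n+1}$. This yields an infinite strictly descending chain $y_N > y_{N+1} > y_{N+2} > \cdots$ in $Y$, contradicting the well-foundedness of $Y$. Hence no infinite strictly descending chain exists in the quasi-product, which is therefore well-founded. I expect the only step deserving care to be the stabilization observation—that a weakly decreasing sequence in a well-founded poset is eventually constant—since everything else is a direct unwinding of \ref{quasi-product-1} and \ref{quasi-product-2}.
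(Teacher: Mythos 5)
Your proof is correct, and it is exactly the natural argument: the paper itself states this lemma without proof (calling it ``an easy fact''), and your route---projecting a hypothetical descending chain via $\pi_X$, stabilizing the first coordinate by well-foundedness of $X$, then pulling the tail back through the order-embedding $F_x$ to contradict well-foundedness of $Y$---is the intended one. Your care about the stabilization step is well placed but fully resolved by your minimal-element argument, so nothing is missing.
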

}

\section{Proof of Theorem~\ref{thm:main1}}\label{sec:thm1}

This section is devoted to the proof of Theorem~\ref{thm:main1}. The following proposition does all the work. 

\begin{proposition}\label{prop:construction}
Given an uncountable regular cardinal $\kappa$, $n\in\omega$, and a well-founded $(n, \kappa^+)$-semiladder $(P, \le)$, there exists a quasi-product of $P$ and $\kappa$ whose principal ideals have cardinality $< \kappa$. 
\end{proposition}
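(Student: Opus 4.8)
The plan is to realize the quasi-product as the set $P \times \kappa$ (where $\kappa$ is viewed as a well-ordered join-semilattice under $\max$) equipped with a join whose first coordinate is forced by \ref{quasi-product-2} and whose second coordinate is governed by a system of ``threshold functions''. Concretely, to each $x \in P$ I attach a map $\ell_x : P\downarrow x \to \kappa$ with $\ell_x(x) = 0$, recording for each $x' \le x$ the height in the fibre over $x$ at which the bottom of the fibre over $x'$ first becomes visible, and I set
\[
(x_0, \alpha_0) \vee (x_1, \alpha_1) \coloneqq \Big(x_0 \vee x_1,\ \max\big(\alpha_0,\, \alpha_1,\, \ell_{x_0 \vee x_1}(x_0),\, \ell_{x_0\vee x_1}(x_1)\big)\Big).
\]
The order this is meant to induce is $(x', \alpha') \preceq (x, \alpha) \iff x' \le x,\ \alpha' \le \alpha,\ \text{and } \ell_x(x') \le \alpha$. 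Granting that $\preceq$ is a partial order whose least upper bounds are computed by the displayed formula, properties \ref{quasi-product-1} and \ref{quasi-product-2} are immediate: the fibre map $F_x \colon \alpha \mapsto (x,\alpha)$ is an order-embedding of $\kappa$ precisely because $\ell_x(x)=0$, and $\pi_P$ is a join-homomorphism because the first coordinate of a join is computed in $P$. Moreover, a point $(x',\alpha')$ lies below $(x,\alpha)$ only if $x'\le x$, $\alpha'\le\alpha$, and $\ell_x(x') \le \alpha$, so
\[
\big|{\downarrow}(x,\alpha)\big| \le \big|\{x' \le x : \ell_x(x') \le \alpha\}\big|\cdot |\alpha+1|,
\]
and the right-hand side is $< \kappa$ as soon as every \emph{level set} $\{x' \le x : \ell_x(x') \le \alpha\}$ has cardinality $< \kappa$; this is where the hypotheses $|P\downarrow x| < \kappa^+$ and the regularity of $\kappa$ enter.

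The bulk of the work is to produce the family $(\ell_x)_{x \in P}$ so that $\preceq$ is genuinely a partial order with the asserted joins and all level sets are small. Unwinding antisymmetry, associativity, and the least-upper-bound property of the formula yields four requirements: the normalisation $\ell_x(x)=0$; a \emph{coherence} inequality $\ell_x(x'') \le \max(\ell_x(x'), \ell_{x'}(x''))$ for $x''\le x' \le x$ (needed for transitivity); \emph{monotonicity in the upper index}, $\ell_x(x') \le \ell_z(x')$ for $x'\le x\le z$, together with \emph{sub-join-additivity}, $\ell_z(x_0\vee x_1) \le \max(\ell_z(x_0), \ell_z(x_1))$ (needed for the formula to give the least upper bound, hence for associativity); and finally the \emph{smallness} of all level sets. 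I build the $\ell_x$ by recursion on the well-founded rank of $P$: having defined $\ell_w$ for all $w <_P x$, I enumerate $P\downarrow x$ in order type $\le \kappa$ and assign to each $x'\le x$ the value $\ell_x(x') \coloneqq \max\big(\mathrm{pos}_x(x'),\, \sup\{\ell_w(x') : x' \le w <_P x\}\big)$, where $\mathrm{pos}_x(x')$ is the position of $x'$ in the chosen enumeration. The positional term forces $\{x' \le x : \ell_x(x') \le \alpha\} \subseteq \{x' : \mathrm{pos}_x(x') \le \alpha\}$, which by regularity of $\kappa$ has size $< \kappa$; taking the maximum with the supremum term secures monotonicity and coherence, while sub-join-additivity is arranged by the choice of enumeration (it is automatic when $P$ is a chain, as in the base case $n=1$).

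The main obstacle is reconciling these two demands, i.e. showing they are compatible. The monotonicity requirement accumulates lower bounds on $\ell_x(x')$ as $x$ increases, and the construction breaks unless $\sup\{\ell_w(x') : x' \le w <_P x\}$ stays strictly below $\kappa$ for every $x'\le x$; if this supremum ever reached $\kappa$, the forced value $\ell_x(x')$ would leave $\kappa$. This is exactly where the regularity of $\kappa$, the bound $|P\downarrow x| < \kappa^+$, and the freedom to choose the enumerations coherently must be balanced: the set $\{w : x' \le w <_P x\}$ has size $\le\kappa$, and one must keep each threshold essentially frozen once it is assigned, so that along this set the monotone values $\ell_w(x')$ converge to a limit $< \kappa$ rather than cofinally in $\kappa$. (Note, importantly, that $\MJ{(n+1)}$-freeness of $P$ is \emph{not} needed here: the $\MJ{}$-freeness of the quasi-product is supplied afterwards by Proposition~\ref{prop:quasi-productfree}, and this proposition uses only well-foundedness and the ideal bound.) Once the family $(\ell_x)$ with the four properties is in hand, verifying that $(P\times\kappa, \vee)$ is a quasi-product with principal ideals of cardinality $< \kappa$ is the routine computation sketched above.
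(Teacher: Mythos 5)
Your requirement system is inconsistent, so the recursion cannot be completed as specified. The fatal clash is between \emph{monotonicity in the upper index} ($\ell_x(x') \le \ell_z(x')$ for $x' \le x \le z$) and the smallness of all level sets, and it already appears in the base case $n = 1$: take $P$ to be the ordinal $\kappa^{+}$, a perfectly good well-founded $(1,\kappa^{+})$-semiladder (indeed the first step of the paper's iteration). For fixed $x'$, the map $z \mapsto \ell_z(x')$ is nondecreasing along the chain with values in $\kappa$; it can have at most $\kappa$-many strict jumps, so by regularity of $\kappa^{+}$ it stabilizes at some value $g(x') < \kappa$ beyond some $z_0(x') < \kappa^{+}$. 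If some level set $\{x' : g(x') \le \alpha\}$ had size $\kappa$, choose $X$ inside it of size $\kappa$ and $z$ above $\sup X$ and above every $z_0(x')$ for $x' \in X$; then $\{x' \le z : \ell_z(x') \le \alpha\} \supseteq X$ violates smallness. Hence every level set of $g \colon \kappa^{+} \to \kappa$ has size $< \kappa$, making $\kappa^{+} = \bigcup_{\alpha < \kappa}\{x' : g(x') \le \alpha\}$ of size $\le \kappa$ --- contradiction. Note that monotonicity is not actually forced by your join formula: in the least-upper-bound check, $\ell_x(x_i) \le \delta$ follows instead from the \emph{subadditive} inequality $\ell_x(x') \le \max(\ell_z(x'), \ell_z(x))$ for $x' \le x \le z$, combined with $\ell_z(x_0 \vee x_1) \le \max(\ell_z(x_0), \ell_z(x_1))$; this is the transposed form of the transitive/subadditive maps of Definition~\ref{def:transmap}. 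But even after that repair, your recursion does not close: you yourself flag keeping $\sup\{\ell_w(x') : x' \le w <_P x\}$ below $\kappa$ as ``the main obstacle'' and offer only the hope that thresholds stay ``essentially frozen,'' with no mechanism. That coherence problem is the entire content of such a proof --- in the paper's own threshold-style constructions (Theorems~\ref{thm:mainplus2} and \ref{thm:mainplus3}) it is exactly what consumes the square principles --- so the heart of the argument is missing.

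The paper's proof of this proposition avoids the threshold form altogether, and your parenthetical that $\MJ{(n+1)}$-freeness of $P$ is not needed misreads what makes it work. The paper builds orders $\trianglelefteq_p$ on $(P \downarrow p) \times \kappa$ by recursion on the well-founded $P$; at each $p$ it uses freeness, via Proposition~\ref{prop:equivfree}, to fix a lower covering $\{I_0, \dots, I_{n-1}\}$ of $p$ by at most $n$ ideals, and then filters each $I_i \times \kappa$ as an increasing union $\langle J^i_\alpha \mid \alpha < \kappa \rangle$ of sets of size $< \kappa$ that are downward closed and closed under the relevant joins (Claim~\ref{claim:construction}); the principal ideal of $(p,\alpha)$ is then $\{p\} \times (\alpha+1) \cup \bigcup_{i<n} J^i_\alpha$. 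Two features are essential and absent from your proposal. First, the bound $n$ on the covering is what keeps the $\omega$-step join-closure sets $H^i_k$ of size $< \kappa$ at each stage; with an unbounded covering (say by all principal ideals below $p$, which is all one has without freeness) the closure blows up, so freeness enters the existence proof itself, not merely the $\MJ{}$-freeness of the output via Proposition~\ref{prop:quasi-productfree}. Second, the ideals ${\trianglelefteq_p} \downarrow (p,\alpha)$ are not rectangles: a pair $(q,\beta)$ with $\beta > \alpha$ may lie below $(p,\alpha)$ (already $J^i_0$ contains $(q^i_0, \gamma^i_0)$ with $\gamma^i_0$ arbitrary). Your stipulation that $(x',\alpha') \preceq (x,\alpha)$ forces $\alpha' \le \alpha$ excludes precisely this flexibility, and it is this flexibility that defuses counting obstructions of the kind exhibited above and lets the construction go through in $\mathsf{ZFC}$ for every uncountable regular $\kappa$.
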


\begin{proof}
We define inductively a system $\langle {\trianglelefteq_p} \mid p \in P \rangle$ of join-semilattice orderings such that:
\begin{enumerate}[label=(\roman{*})]
\itemsep0.3em
\item \label{proof:prop:construction-1}$\mathrm{dom}(\trianglelefteq_p) = (P\downarrow p) \times \kappa$.
\item \label{proof:prop:construction-2}$(p,\alpha) \trianglelefteq_p (p, \beta)$ if and only if $\alpha \le \beta$.
\item \label{proof:prop:construction-3} The principal ideals of $\trianglelefteq_p$ have cardinality $< \kappa$.
\item \label{proof:prop:construction-4}If $p \le q$ then $\trianglelefteq_p = \trianglelefteq_q \upharpoonright \mathrm{dom}(\trianglelefteq_p)$ and $\mathrm{dom}(\trianglelefteq_p)$ is an ideal of $\trianglelefteq_q$.
\end{enumerate}
Suppose we have defined such a system and let ${\trianglelefteq} \coloneqq \bigcup_{p \in P} {\trianglelefteq_p}$. We denote the join operator associated to $\trianglelefteq_p$ and $\trianglelefteq$ by $\vee_p$ and $\vee$, respectively. We also denote the join operator of $P$ by $\vee$, but this causes no ambiguity in what follows.
\begin{claim}\label{claim:prop:construction}
$(P\times \kappa, \trianglelefteq)$ is a quasi-product of $P$ and $\kappa$ whose principal ideals have cardinality $<\kappa$.
\end{claim}
\begin{proof}

By condition~\ref{proof:prop:construction-4}, $(P\times \kappa, \trianglelefteq)$ is a direct limit of join-semilattices. In particular, $(P\times \kappa, \trianglelefteq)$ is a join-semilattice. Furthermore, as a direct consequence of \ref{proof:prop:construction-3} \fxnote*{}{and \ref{proof:prop:construction-4}}, its principal ideals have cardinality less than $\kappa$.

Let us show that $(P\times \kappa, \trianglelefteq)$ is a quasi-product of $P$ and $\kappa$. Condition~\ref{quasi-product-1} directly follows from \ref{proof:prop:construction-2}. In order to show that \ref{quasi-product-2} holds, fix $(p_0, \alpha_0)$ and $(p_1, \alpha_1)$ in $P\times \kappa$, towards proving that there exists $\beta \in \kappa$ with $(p_0 \vee p_1, \beta) = (p_0, \alpha_0) \vee (p_1, \alpha_1)$. Let $q \in P$ and $\beta \in \kappa$ be such that $(p_0, \alpha_0) \vee (p_1, \alpha_1) = (q, \beta)$. By \ref{proof:prop:construction-1},  $(p_0, \alpha_0)$ and $(p_1, \alpha_1)$ belong to the domain of $\trianglelefteq_{p_0 \vee p_1}$. By \ref{proof:prop:construction-4}, their join (in $(P\times \kappa, \trianglelefteq)$) \fxnote*{}{also} belongs to the domain of $\trianglelefteq_{p_0 \vee p_1}$. In particular, $q \le p_0 \vee p_1$. On the other hand, by \ref{proof:prop:construction-4} again, $(p_0, \alpha_0)$ and $(p_1, \alpha_1)$ must belong to the domain of $\trianglelefteq_q$, and thus, by \ref{proof:prop:construction-1}, $p_0, p_1 \le q$, or equivalently $p_0 \vee p_1 \le q$. Therefore $q = p_0 \vee p_1$ as we wanted to show.
\end{proof}

We proceed with the inductive definition \fxnote*{}{of the $\trianglelefteq_p$}. We carry out the induction over $(P, \le)$, which is well-founded by hypothesis. First let $(p, \alpha) \trianglelefteq_{p} (p, \beta)$ for every minimal $p\in P$ and every $\alpha \le \beta$. 

Now fix $p \in P$ and suppose that we have defined $\trianglelefteq_q$ for every $q < p$, towards defining $\trianglelefteq_p$. By hypothesis and Proposition~\ref{prop:equivfree}, $p$ has a lower covering of size at most $n$. Fix a lower covering $\{I_0, I_1, \ldots, I_{n-1}\}$ of $p$ \fxnote*{}{(with repetitions if necessary)}.

\begin{claim}\label{claim:construction}
There exists a sequence $\langle J_\alpha^i \mid i < n \text{ and } \alpha < \kappa\rangle$ such that, for every $q,z \in P$ and $\alpha,\beta,\gamma <\kappa$ and $i < n$:
\begin{enumerate}[label={\upshape (\alph*)}]
\itemsep0.3em
\item\label{claim:construction-1} $\bigcup_{\alpha \in \kappa} J^i_\alpha = I_i\times \kappa$.
\item\label{claim:construction-2} If $\alpha \le \beta $, then $J^i_\alpha \subseteq J^i_\beta$.
\item\label{claim:construction-3} $|J^i_\alpha| < \kappa$.
\item\label{claim:construction-4} If $(q, \beta) \trianglelefteq_{z} (z, \gamma) \in J_\alpha^i$, then $(q, \beta) \in J_\alpha^i$.
\item\label{claim:construction-5} If $q \vee z \in I_i$ and $(q, \beta), (z, \gamma) \in J_\alpha^0 \cup \ldots \cup J_\alpha^{n-1}$, then $(q, \beta) \vee_{q \vee z} (z, \gamma) \in J_\alpha^i$.
\end{enumerate}
\end{claim}
\begin{proof}
We construct this sequence by induction on $\alpha < \kappa$. First, for each $i < n$ fix an enumeration $\langle (q_\alpha^i, \gamma_\alpha^i) \mid \alpha < \kappa \rangle$ of $I_i\times \kappa$. Set $J_0^i = {\trianglelefteq_{q_0^i}} \downarrow (q_0^i, \gamma_0^i)$. Suppose we have defined $J^i_\beta$ for every $i < n$ and $\beta < \alpha$. Define the sequence $\langle H_k^i \mid i < n \text{ and } k \in \omega \rangle$ as follows: for each $i < n$, let 
\[
H_0^i = \big({\trianglelefteq_{q_\alpha^i}} \downarrow (q_\alpha^i, \gamma_\alpha^i)\big)\cup \bigcup_{\beta < \alpha} J_\beta^i.
\]
Then, for each \fxnote*{}{$i < n$ and $k \in \omega$}, we let inductively
\begin{multline*}
H_{k+1}^i = H_{k}^i \cup \bigcup\Big\{{\trianglelefteq_{z_0\vee z_1}}\downarrow \big((z_0, \beta_0) \vee_{z_0 \vee z_1} (z_1, \beta_1)\big) \Bigm| \\(z_0, \beta_0), (z_1, \beta_1) \in \bigcup_{l < n} H^l_k \text{ and } z_0 \vee z_1 \in I_i\Big\}.
\end{multline*}
Finally let $J_\alpha^i = \bigcup_{k \in \omega} H_k^i$. The sequence $\langle J_\alpha^i \mid i < n \text{ and } \alpha \in \kappa \rangle$ satisfies \ref{claim:construction-1}-\ref{claim:construction-5}: properties~\ref{claim:construction-1} and \ref{claim:construction-2} directly follow from our construction \fxnote*{}{of $H_0^i$ already}; properties \ref{claim:construction-4} and \ref{claim:construction-5} are guaranteed by the way we defined the $H_k^i$s; finally, to see that also property~\ref{claim:construction-3} holds, note that each $H_k^i$ has cardinality less than $\kappa$ by the regularity of $\kappa$, and that $J_\alpha^i$, being the union of the $H_k^i$s, has cardinality less than $\kappa$ since $\kappa$ is uncountable and regular.
\end{proof}

We are ready to define $\trianglelefteq_p$:
\begin{itemize}
\itemsep0.3em
\item let $(p,\alpha) \trianglelefteq_p (p,\beta)$ for every $\alpha\le\beta<\kappa$, 
\item let $(q, \beta) \trianglelefteq_p (p,\alpha)$ for every $\alpha < \kappa$ and $i < n$ and $(q, \beta) \in J_\alpha^i$,
\item \fxnote*{}{let $(q, \beta) \trianglelefteq_p (z, \gamma)$ if there exists $r < p$ such that  $(q, \beta) \trianglelefteq_r (z, \gamma)$.}
\end{itemize}

First let us verify that our definition is well-behaved.
\fxnote*{}{
\begin{claim}\label{claim:behave}
$\trianglelefteq_p \upharpoonright \mathrm{dom}(\trianglelefteq_q) = {\trianglelefteq_q}$ for every $q < p$.
\end{claim}
\begin{proof}
The inclusion $\trianglelefteq_p \upharpoonright \mathrm{dom}(\trianglelefteq_q) \supseteq {\trianglelefteq_q}$ trivially follows from the definition of $\trianglelefteq_p$. Hence we are left to prove the other inclusion, namely $\trianglelefteq_p \upharpoonright \mathrm{dom}(\trianglelefteq_q) \subseteq {\trianglelefteq_q}$.

Fix some $q < p$ and $(z_0, \beta_0), (z_1, \beta_1) \in \mathrm{dom}(\trianglelefteq_q)$ such that $(z_0, \beta_0) \trianglelefteq_p (z_1, \beta_1)$. Clearly, since $(z_0, \beta_0), (z_1, \beta_1) \in \mathrm{dom}(\trianglelefteq_q)$, we have $z_0 \vee z_1 \le q$. Moreover, by definition of $\trianglelefteq_p$, there is some $r < p$ such that $(z_0, \beta_0) \trianglelefteq_r (z_1, \beta_1)$. In particular, $z_0 \vee z_1 \le r$, since $(z_0, \beta_0), (z_1, \beta_1) \in \mathrm{dom}(\trianglelefteq_r)$. It follows from the induction hypothesis on $\trianglelefteq_r$ that $(z_0, \beta_0) \trianglelefteq_{z_0 \vee z_1} (z_1, \beta_1)$. Finally, as $z_0 \vee z_1 \le q$, we obtain, by induction hypothesis on $\trianglelefteq_q$, $(z_0, \beta_0) \trianglelefteq_q (z_1, \beta_1)$.
\end{proof}}

\fxnote*{}{It follows from Claim~\ref{claim:behave} together with} the transitivity of $\trianglelefteq_q$ for $q < p$ and properties~\ref{claim:construction-2} and \ref{claim:construction-4} of the $J_\alpha^i$s that $\trianglelefteq_p$ is transitive. Since $\trianglelefteq_p$ is reflexive, we conclude it is a partial order. Moreover, for every $\alpha \in \kappa$, we have, by construction, 
\[
{\trianglelefteq_p} \downarrow (p, \alpha) = \big(\{p\}\times(\alpha+1)\big) \cup \bigcup_{i < n} J_\alpha^i
\]
which has cardinality less than $\kappa$ by \ref{claim:construction-3}. In particular, every principal ideal of $\trianglelefteq_p$ has cardinality less than $\kappa$.
 
Towards showing that $\trianglelefteq_p$ is a join-semilattice, fix $(q_0,\beta_0), (q_1, \beta_1)$ with $q_0,q_1 \le p$. Suppose first that $q_0 \vee q_1 < p$. We claim \fxnote*{}{that} $(q_0, \beta_0) \vee_{q_0 \vee q_1} (q_1, \beta_1)$ is the $\trianglelefteq_p$-least upper bound of $\{(q_0, \beta_0), (q_1, \beta_1)\}$---note that the claim also implies that $\mathrm{dom}(\trianglelefteq_q)$ is an ideal of $\trianglelefteq_p$ for every $q < p$. It suffices to show that if $(q_0, \beta_0), (q_1, \beta_1) \trianglelefteq_p (p, \alpha)$ for some $\alpha$, then $(q_0, \beta_0) \vee_{q_0 \vee q_1} (q_1, \beta_1) \trianglelefteq_p (p, \alpha)$\fxnote*{}{---indeed, the other cases are made trivial by Claim~\ref{claim:behave}}. By hypothesis and by definition of $\trianglelefteq_p$, there must be $i, j, k < n$ such that $q_0 \vee q_1 \in I_i$, $(q_0, \beta_0)\in J_\alpha^j$ and $(q_1, \beta_1) \in J_\alpha^k$. But then, it follows from property~\ref{claim:construction-5} of the $J_\alpha^i$s that $(q_0, \beta_0) \vee_{q_0 \vee q_1} (q_1, \beta_1) \in J_\alpha^i$. Thus, $(q_0, \beta_0) \vee_{q_0 \vee q_1} (q_1, \beta_1) \trianglelefteq_p (p, \alpha)$ by definition of $\trianglelefteq_p$.

If, on the other hand, $q_0 \vee q_1 = p$, then $$\Big(p, \min\big\{\alpha \in \kappa \mid (q_0, \beta_0),(q_1, \beta_1) \trianglelefteq_p (p,\alpha)\big\}\Big)$$ is easily seen to be the $\trianglelefteq_p$-least upper bound of $\{(q_0, \beta_0), (q_1, \beta_1)\}$---note that, by property~\ref{claim:construction-1} of the $J_\alpha^i$s, there always exist $\alpha \in \kappa$ such that $(q_0, \beta_0), (q_1, \beta_1) \trianglelefteq_p (p, \alpha)$.
\end{proof}

The proof of Theorem~\ref{thm:main1} follows by a repeated application of Proposition~\ref{prop:construction}.
\begin{proof}[Proof of Theorem~{\upshape\ref{thm:main1}}]
Let $\kappa$ be an uncountable regular cardinal and $n\in\omega$ some natural number. We inductively define a finite sequence $(P_i)_{i \le n}$ of join-semilattices such that $P_i$ is a well-founded $(i+1, \kappa^{+(n-i)})$-semiladder of cardinality $\kappa^{+n}$ for every $i \le n$. The definition goes as follows:
\begin{itemize}
\itemsep0.3em
\item Let $P_0 = \kappa^{+n}$ with its usual ordering---in particular, $P_0$ is a well-founded $(1, \kappa^{+n})$-semiladder.
\item For every $i < n$, let $P_{i+1}$ be a quasi-product of $P_i$ and $\kappa^{+(n-i-1)}$ whose principal ideals have cardinality less than $\kappa^{+(n-i-1)}$, which exists by Proposition~\ref{prop:construction}. Since a quasi-product of two well-founded join-semilattices is still well-founded, and since $P_i$ is well-founded, $P_{i+1}$ is also well-founded. Moreover, as $P_i$ is $\MJ{(i+2)}$-free, it follows from Proposition~\ref{prop:quasi-productfree} that $P_{i+1}$, being a quasi-product of $P_i$ and a linear order, is $\MJ{(i+3)}$-free. Thus, $P_{i+1}$ is a well-founded $(i+2, \kappa^{+(n-i-1)})$-semiladder of cardinality $\kappa^{+n}$.
\end{itemize}
At the end, $P_n$ is an $(n+1, \kappa)$-semiladder of cardinality $\kappa^{+n}$.
\end{proof}

\section{Special ladders}\label{sec:specialladder}

In Section~\ref{sec:thm1}, we have proven Theorem~\ref{thm:main1} by iterating the construction of an ${(n+1, \kappa)}$-semiladder as a quasi-product of $P$ and $\kappa$, where $P$ is an $(n, \kappa^+)$-semilad\-der. To prove Theorems~\ref{thm:main2} and \ref{thm:main3} we follow a similar strategy: we iterate the construction of an $(n+1, \kappa)$-semiladder as a quasi-product of $|P|^+$ and $P$ for some $(n, \kappa)$-semiladder $P$. These quasi-products are induced by maps from $[|P|^+]^2$ \fxnote*{}{to} $P$ that satisfy certain triangular inequalities that are reminiscent\footnote{It is unclear to us if and to what extent our Definition~\ref{def:transmap} and the subsequent results fit into Todor\v{c}evi\'{c}'s \emph{Walks on ordinals} framework \cite{MR2355670}.} of the ones satisfied by Todor\v{c}evi\'{c} $\rho$-functions \cite{MR2355670}. Consequently, we adopt his terminology in the following definition.

\begin{definition}\label{def:transmap}
Consider a join-semilattice $(B, \le)$, an ordinal $\gamma$, and a map $\varrho: [\gamma]^2\rightarrow B$, then:
\begin{itemize}
\itemsep0.3em
\item $\varrho$ is said to be \emph{transitive} if $\varrho(\alpha, \delta) \le \varrho(\alpha, \beta) \vee \varrho(\beta, \delta)$ for every $\alpha < \beta < \delta < \gamma$.
\item $\varrho$ is said to be \emph{subadditive} if $\varrho(\alpha, \beta) \le \varrho(\alpha, \delta) \vee \varrho(\beta, \delta)$ for every $\alpha < \beta < \delta < \gamma$.
\end{itemize}
\end{definition}

Given $\alpha < \beta < \gamma$ and a map $\varrho:[\gamma]^2 \rightarrow B$, we simply write $\varrho(\alpha, \beta)$ instead of $\varrho(\{\alpha, \beta\})$, and we write $\varrho \upharpoonright \alpha$ instead of $\varrho \upharpoonright [\alpha]^2$. \fxnote*{}{From now on, we assume that $B$ has a least element\footnote{This assumption is not essential, as definition \eqref{eq:defspecialladder} (where we implicitly use this assumption) and Proposition~\ref{prop:specialladder} could be reformulated without it, at the cost of a slightly more cumbersome presentation. However, the additional generality gained by dropping the assumption is negligible for our purposes.}, and we adopt the convention that $\varrho(\alpha, \alpha) = \mathbf{0}$ for every $\alpha < \gamma$.}

For each $\alpha < \gamma$ and $p \in B$ we let 
\begin{align*}
\mathrm{ht}(\alpha, p) &\coloneqq \alpha,\\
D_\varrho(\alpha, p) &\coloneqq \big\{\eta  < \alpha \mid \varrho(\eta, \alpha) \le p\big\}.
\end{align*}
\fxnote*{}{Note that the function $\mathrm{ht}$ has nothing to do with the standard ``height" function in order theory.} We are interested in the following binary relation $\trianglelefteq_\varrho$ on $\gamma \times B$ induced by $\varrho$: for every $p, q \in B$ and $\alpha, \beta < \gamma$,\vspace{0.3em}
\begin{equation}\label{eq:defspecialladder}
 (\alpha, p) \trianglelefteq_\varrho (\beta, q) \text{ if and only if } \alpha \le \beta \text{ and } p  \vee \varrho(\alpha, \beta) \le q.\vspace{0.3em}
\end{equation}

\fxnote*{}{Note how the reflexivity of $\trianglelefteq_\varrho$ directly follows from $\varrho(\alpha, \alpha) = \mathbf{0}$ for all $\alpha$s. More importantly, observe how the} relation $\trianglelefteq_\varrho$ is locally close to the product ordering. In particular, if $\varrho:[\gamma]^2\rightarrow B$ is constant with value $\mathbf{0}$, then $\trianglelefteq_\varrho$ is exactly the product ordering of $\gamma$ and $B$. 

\fxnote*{}{
The next two results study the relationship between the properties of $\varrho$ and those of the induced relation $\trianglelefteq_\varrho$. Throughout, we fix $\varrho$ and drop the subscript $\varrho$ from $\trianglelefteq$ and $D(\alpha, p)$.

The first easy lemma identifies when the structure $( \gamma \times B, \trianglelefteq)$ has an element below all others. We deliberately avoid the expression ``least element,” which is usually reserved for partial orders; at this point it is not yet clear whether, and under which conditions, $\trianglelefteq$ is a partial order (see Proposition~\ref{prop:specialladder}).

\begin{lemma}\label{lemma:minimum}
The following are equivalent:
\begin{enumerate}[label={\upshape (\arabic*)}]
\item\label{lemma:minimum-1} There exists $x \in \gamma \times B$ such that $x \trianglelefteq y$ for all $y \in \gamma \times B$.
\item\label{lemma:minimum-2} $(0, \mathbf{0}) \trianglelefteq y$ for all $y \in \gamma \times B$.
\item\label{lemma:minimum-3} $\varrho(0, \alpha) = \mathbf{0}$ for all $\alpha < \gamma$.
\end{enumerate}
\end{lemma}
\begin{proof}
The equivalence \ref{lemma:minimum-1} $\iff$ \ref{lemma:minimum-2} directly follows from $(0, \mathbf{0})$ being a minimal element of $(\gamma \times B, \trianglelefteq)$ by definition of $\trianglelefteq$ (see \eqref{eq:defspecialladder}). The implication  \ref{lemma:minimum-3} $\Rightarrow$ \ref{lemma:minimum-2} follows straightforwardly from the definition of $\trianglelefteq$ and from $\mathbf{0}$ being the least element of $B$. Finally, in order to show \ref{lemma:minimum-2} $\Rightarrow$ \ref{lemma:minimum-3}, pick $\alpha < \gamma$. Since by assumption $(0, \mathbf{0}) \trianglelefteq (\alpha, \mathbf{0})$, it follows from the definition of $\trianglelefteq$ that $\varrho(0, \alpha) = \mathbf{0}$.
\end{proof}
}

\begin{proposition}\label{prop:specialladder}
The structure $(\gamma\times B, \trianglelefteq)$ is:
\begin{enumerate}[label={\upshape (\arabic*)}]
\itemsep0.3em
\item\label{prop:specialladder-1} a poset if and only if $\varrho$ is transitive.
\item\label{prop:specialladder-2} a join-semilattice if and only if $\varrho$ is \fxnote*{}{both} transitive and subadditive. In this case, it is a quasi-product of $\gamma$ and $B$.
\item\label{prop:specialladder-3} a lattice if and only if $\varrho$ is \fxnote*{}{both} transitive and subadditive, $B$ is a lattice, $\varrho(0, \alpha) = \mathbf{0}$, and $D(\alpha, p)$ is closed\footnote{With respect to the order topology of $\alpha$.} in $\alpha$ for every $\alpha < \gamma$ and $p \in B$.
\end{enumerate}
\end{proposition}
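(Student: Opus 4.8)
The plan is to prove the three items in order, exploiting that a join-semilattice is in particular a poset and a lattice in particular a join-semilattice, and using throughout the convention $\varrho(\alpha,\alpha)=\mathbf 0$ together with the defining equivalence \eqref{eq:defspecialladder}. For \ref{prop:specialladder-1}, I would first note that reflexivity and antisymmetry hold unconditionally: reflexivity because $p\vee\varrho(\alpha,\alpha)=p$, and antisymmetry because $(\alpha,p)\trianglelefteq(\beta,q)\trianglelefteq(\alpha,p)$ forces $\alpha=\beta$, hence $\varrho(\alpha,\beta)=\mathbf 0$ and $p=q$. So only transitivity is at stake. Assuming $\varrho$ transitive, a chain $(\alpha,p)\trianglelefteq(\beta,q)\trianglelefteq(\delta,r)$ gives $p\le q\le r$, $\varrho(\alpha,\beta)\le r$, $\varrho(\beta,\delta)\le r$, whence $p\vee\varrho(\alpha,\delta)\le p\vee\varrho(\alpha,\beta)\vee\varrho(\beta,\delta)\le r$; conversely, feeding the witnesses $(\alpha,\mathbf 0)\trianglelefteq(\beta,\varrho(\alpha,\beta))\trianglelefteq(\delta,\varrho(\alpha,\beta)\vee\varrho(\beta,\delta))$ into transitivity of $\trianglelefteq$ recovers exactly $\varrho(\alpha,\delta)\le\varrho(\alpha,\beta)\vee\varrho(\beta,\delta)$.

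For \ref{prop:specialladder-2}, assuming $\trianglelefteq$ is already a poset (so $\varrho$ is transitive), I would exhibit the candidate join of $(\alpha,p),(\beta,q)$ with $\alpha\le\beta$ as $(\beta,\,p\vee q\vee\varrho(\alpha,\beta))$. That it is an upper bound is immediate; that it is the \emph{least} one is precisely where subadditivity enters, used to prove $\varrho(\alpha,\beta)\le r$ for any common upper bound $(\delta,r)$. For the converse I would compute, inside a presumed join-semilattice, the join of $(\alpha,\mathbf 0)$ and $(\beta,\mathbf 0)$: comparing it with the upper bounds $(\beta,\varrho(\alpha,\beta))$ and $(\delta,\varrho(\alpha,\delta)\vee\varrho(\beta,\delta))$ pins it to $(\beta,\varrho(\alpha,\beta))$ and forces $\varrho(\alpha,\beta)\le\varrho(\alpha,\delta)\vee\varrho(\beta,\delta)$, i.e. subadditivity. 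The quasi-product claim is then quick: \ref{quasi-product-1} holds since $(\alpha,p)\trianglelefteq(\alpha,q)\iff p\le q$, so each $F_\alpha$ is an order-embedding, and \ref{quasi-product-2} holds since the first coordinate of the join is $\max(\alpha,\beta)=\alpha\vee\beta$, making $\pi_\gamma$ a join-homomorphism.

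The substance is in \ref{prop:specialladder-3}. Granting transitivity and subadditivity (so joins exist by \ref{prop:specialladder-2}), lattice-ness reduces to existence of binary meets, and the plan is to analyze meets explicitly. For $(\alpha,p),(\beta,q)$ with $\alpha\le\beta$, a pair $(\delta,r)$ is a lower bound exactly when $r\le p\wedge q$ (here $B$ being a lattice is what makes $p\wedge q$ exist) and $\delta$ lies in
\[
E\coloneqq\{\delta\le\alpha\mid \varrho(\delta,\alpha)\le p\ \text{and}\ \varrho(\delta,\beta)\le q\}.
\]
Since the first coordinate of a meet must itself be an upper bound of $E$ belonging to $E$, the meet exists iff $E$ has a maximum, in which case $(\max E,\,p\wedge q)$ is it; verifying this uses subadditivity to show $\varrho(\delta,\max E)\le p\wedge q$ for $\delta\in E$ (applying it with $\delta<\max E\le\alpha\le\beta$). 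For the forward direction I would then check that $\varrho(0,\cdot)=\mathbf 0$ gives $0\in E$, so $E\ne\emptyset$, and that closedness of $D(\alpha,p)$ and $D(\beta,q)$ forces $\sup E\in E$, splitting into the cases $\sup E<\alpha$ and $\sup E=\alpha$.

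For necessity, transitivity and subadditivity come from \ref{prop:specialladder-2}; restricting to $\{0\}\times B$, whose meets (by the computation above, since $\varrho(0,0)=\mathbf 0$) coincide with meets in $B$, shows $B$ must be a lattice; and since $(0,\mathbf 0)$ is its own only lower bound, the meet of $(0,\mathbf 0)$ and $(\alpha,\mathbf 0)$ can exist only if $\varrho(0,\alpha)=\mathbf 0$. \textbf{The main obstacle is the necessity of closedness of $D(\alpha,p)$.} Given a limit $\lambda<\alpha$ with $\lambda=\sup\bigl(D(\alpha,p)\cap\lambda\bigr)\notin D(\alpha,p)$, I must manufacture a meetless pair. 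The trick I expect to need is to set $q\coloneqq p\vee\varrho(\lambda,\alpha)$ and consider $(\alpha,p)$ and $(\lambda,q)$: subadditivity yields $\varrho(\delta,\lambda)\le\varrho(\delta,\alpha)\vee\varrho(\lambda,\alpha)\le q$ for every $\delta\in D(\alpha,p)\cap\lambda$, so $D(\alpha,p)\cap\lambda\subseteq E$ and $\sup E=\lambda$, while $\lambda\notin E$ because $\varrho(\lambda,\alpha)\not\le p$; thus $E$ has no maximum and the meet fails, contradicting the lattice hypothesis.
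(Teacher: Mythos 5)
Your proposal is correct and follows essentially the same route as the paper's proof: the same join formula $(\beta,\, p\vee q\vee\varrho(\alpha,\beta))$ with subadditivity extracted by pinning down one specific join (you use $(\alpha,\mathbf{0}),(\beta,\mathbf{0})$ where the paper uses $(\alpha,\bar{p}),(\beta,\bar{p})$ with $\bar{p}=\varrho(\alpha,\delta)\vee\varrho(\beta,\delta)$), the same meet $(\max E,\, p\wedge q)$ where your $E$ is exactly the paper's set $(D(\alpha,p)\cup\{\alpha\})\cap D(\beta,q)$, and the same witness pair $(\alpha,p)$, $(\lambda,\, p\vee\varrho(\lambda,\alpha))$ with the same subadditivity estimate for the necessity of closedness, argued by contraposition where the paper argues directly. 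The remaining differences (your explicit antisymmetry check, and organizing the meet analysis through the description of the lower bounds as $E\times{\downarrow}(p\wedge q)$) are cosmetic.
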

\begin{proof}
\ref{prop:specialladder-1}: Let us first prove the ``only if'' direction. Pick some $\alpha < \beta < \delta < \gamma$. By definition of $\trianglelefteq$, \[(\alpha, \varrho(\alpha, \beta) \vee \varrho(\beta, \delta)) \trianglelefteq (\beta, \varrho(\alpha, \beta) \vee \varrho(\beta, \delta))\] and \[(\beta, \varrho(\alpha, \beta) \vee \varrho(\beta, \delta)) \trianglelefteq (\delta, \varrho(\alpha, \beta) \vee \varrho(\beta, \delta)).\] 
Since we are assuming that $\trianglelefteq$ is transitive, it follows\[(\alpha, \varrho(\alpha, \beta) \vee \varrho(\beta, \delta)) \trianglelefteq (\delta, \varrho(\alpha, \beta) \vee \varrho(\beta, \delta)),\] and therefore, by definition of $\trianglelefteq$, $\varrho(\alpha, \delta) \le \varrho(\alpha, \beta) \vee \varrho(\beta, \delta)$. Hence, $\varrho$ is transitive. 

Now to the ``if'' direction. Pick $\alpha \le \beta \le \delta < \gamma$ and $p,q,r \in B$ such that $(\alpha, p) \trianglelefteq (\beta, q) \trianglelefteq (\delta, r)$, towards showing that $(\alpha, p) \trianglelefteq (\delta, r)$. Since $\le$ on $B$ is transitive, the only non-trivial case to check is when $\alpha < \beta < \delta$. From the definition of $\trianglelefteq$ and our assumption it follows that  $p \le q \le r$ and $\varrho(\alpha, \beta) \le q$ and $\varrho(\beta, \delta) \le r$. By transitivity of $\varrho$,  $\varrho(\alpha, \delta) \le \varrho(\alpha, \beta) \vee \varrho(\beta, \delta) \le r$, and therefore $(\alpha, p) \trianglelefteq (\delta, r)$. Hence, $\trianglelefteq$ is transitive.\vspace{0.5em}

\ref{prop:specialladder-2}: Let us first prove the ``only if'' direction. 
Suppose that $(\gamma\times B, \trianglelefteq)$ is a join-semilattice, towards showing that $\varrho$ is subadditive. First, we need the following claim, which amounts to saying that $(\gamma\times B, \trianglelefteq)$ satisfies condition \ref{quasi-product-2} of being a quasi-product (see Definition~\ref{def:quasi-product}):
\begin{claim}\label{claim:specialladder-2}
The map $\mathrm{ht}: (\gamma\times B, \trianglelefteq) \rightarrow (\gamma, \le)$ is a join-homomorphism.
\end{claim}
\begin{proof}
Fix $\alpha\le \beta < \gamma$ and $p,q \in B$. Let $\eta < \gamma$ and $z \in B$ be such that $(\alpha, p) \vee (\beta, q) = (\eta, z)$. By definition of $\trianglelefteq$, $(\beta, p \vee q \vee  \varrho(\alpha, \beta))$ is an upper bound of $\{(\alpha, p), (\beta, q)\}$. Therefore, $(\eta, z) \trianglelefteq (\beta, p \vee q \vee  \varrho(\alpha, \beta))$. In particular, $\eta \le \beta$. On the other hand, as $(\beta, q) \trianglelefteq (\eta, z)$, it follows that $\beta \le \eta$. Therefore, $\eta = \beta$. We have shown that $\mathrm{ht}$ is a join-homomorphism.
\end{proof}
Note that $(\gamma\times B, \trianglelefteq)$ satisfies also \ref{quasi-product-1} by definition of $\trianglelefteq$. Therefore, $(\gamma\times B, \trianglelefteq)$ is a quasi-product of $\gamma$ and $B$.

Going back to the ``only if'' direction, fix $\alpha, \beta, \delta$ such that $\alpha < \beta < \delta < \gamma$ and let $\bar{p} = \varrho(\alpha, \delta) \vee \varrho(\beta, \delta)$. By Claim~\ref{claim:specialladder-2}, there exists some $z \in B$ such that 
\begin{equation}\label{eq:specialladder}
(\alpha, \bar{p}) \vee (\beta, \bar{p}) = (\beta, z).
\end{equation}
Note that $\bar{p} \le z$. By $\bar{p}$'s definition, $(\alpha, \bar{p}), (\beta, \bar{p}) \trianglelefteq (\delta, \bar{p})$. It follows that $(\beta, z) \trianglelefteq (\delta, \bar{p})$. Then $z \le \bar{p}$ and we conclude that $z = \bar{p}$. This, together with \eqref{eq:specialladder}, implies that $(\alpha, \bar{p}) \trianglelefteq (\beta, \bar{p})$, and thus $\varrho(\alpha, \beta) \le \bar{p} = \varrho(\alpha, \delta) \vee \varrho(\beta, \delta)$. We have shown that $\varrho$ is subadditive.

Let us prove the ``if'' direction. The transitivity of $\varrho$ and \fxnote*{}{Item}~\ref{prop:specialladder-1} of this proposition imply that $\trianglelefteq$ is a partial order. Now pick $p,q \in B$ and $\alpha \le \beta < \gamma$, towards finding a $\trianglelefteq$-least upper bound for $\{(\alpha, p), (\beta, q)\}$. We claim that $(\beta, p \vee q \vee \varrho(\alpha, \beta))$ is the $\trianglelefteq$-least upper bound of $\{(\alpha, p), (\beta, q)\}$. 

From $\trianglelefteq$'s definition, $(\alpha, p), (\beta, q) \trianglelefteq (\beta, p \vee q \vee \varrho(\alpha, \beta))$. Now suppose that $(\alpha, p), (\beta, q) \trianglelefteq (\delta, r)$ for some $\delta < \gamma$ and $r \in B$. In particular, $\alpha, \beta \le \delta$ and $p \vee \varrho(\alpha, \delta) \le r$ and $q \vee \varrho(\beta, \delta) \le r$. From the subadditivity of $\varrho$ it follows that $\varrho(\alpha, \beta) \le r$. Altogether, we have $p \vee q \vee \varrho(\alpha, \beta) \vee \varrho(\beta, \delta)\le r$, and therefore $(\beta, p \vee q \vee \varrho(\alpha, \beta)) \trianglelefteq (\delta, r)$. Thus, our claim is true.\vspace{0.5em}

\ref{prop:specialladder-3}: Let us first prove the ``only if'' direction. Towards showing that $B$ is a meet-semilattice, fix some $q,r \in B$ and let us find a $\le$-greatest lower bound of $\{q,r\}$. Let $p \in B$ and $\alpha < \gamma$ be such that $(\alpha, p) = (0, q) \wedge (0, r)$.  By $\trianglelefteq$'s definition, we must have $p \le q,r$ and $\alpha = 0$. We claim that $p$ is the $\le$-greatest lower bound of $\{q,r\}$. Pick any $x \in B$ such that $x \le q,r$. Then $(0, x) \trianglelefteq (0, q), (0, r)$. It follows  from $(0, p)$ being the greatest lower bound of $(0,q)$ and $(0,r)$ that $(0, x) \trianglelefteq (0, p)$. In particular, $x \le p$. Thus, $p$ is the $\le$-greatest lower bound of $\{q,r\}$.

Next, fix $\alpha < \gamma$ toward showing that $\varrho(0, \alpha) = \mathbf{0}$. This immediately follows once we note the following: by definition of $\trianglelefteq$ (see \eqref{eq:defspecialladder}), $(0, \mathbf{0})$ is a minimal element of $(\gamma \times B, \trianglelefteq)$; but since we are assuming $(\gamma \times B, \trianglelefteq)$ to be lower directed, this implies that $(0, \mathbf{0})$ is actually the minimum of $(\gamma \times B, \trianglelefteq)$. By Lemma~\ref{lemma:minimum}, we are done.

Now fix $\alpha < \gamma$ and $p \in B$, towards showing that $D(\alpha, p)$ is a closed subset of $\alpha$. Fix some $\eta < \alpha$ such that $\eta = \sup (D(\alpha, p) \cap \eta)$. We want to prove that $\eta \in D(\alpha, p)$.  \fxnote*{}{Let $\mu < \gamma$ and $q \in B$ be such that $(\mu, q) = (\alpha, p) \wedge (\eta, p \vee \varrho(\eta, \alpha))$. Clearly, $\mu \le \eta$. Once we show $\mu \ge \eta$ we are done, as it follows from $(\eta, q) = (\mu, q) \trianglelefteq (\alpha, p)$ and from the definition of $\trianglelefteq$ that $\varrho(\eta, \alpha) \le p$ or, equivalently, $\eta \in D(\alpha, p)$.} 

Fix $\nu \in D(\alpha, p) \cap \eta$. By definition of $D$, $\varrho(\nu, \alpha) \le p$. Since we are assuming $(\gamma \times B, \trianglelefteq)$ to be a lattice, it follows from \fxnote*{}{Item}~\ref{prop:specialladder-2} of this proposition that $\varrho$ is subadditive. By the subadditivity of $\varrho$, $\varrho(\nu, \eta) \le \varrho(\nu, \alpha) \vee \varrho(\eta, \alpha)$. Combining these last observations, we get that $\varrho(\nu, \eta) \le p \vee \varrho(\eta, \alpha)$. 
Therefore,
\begin{equation}\label{eq:propspecialladder}
\forall \nu \in D(\alpha, p) \cap \eta, \ \ (\nu, p) \trianglelefteq (\alpha, p) \wedge (\eta, p \vee \varrho(\eta, \alpha)).
\end{equation}
Moreover, it follows from \eqref{eq:propspecialladder} that \fxnote*{}{$\mu \ge \sup (D(\alpha, p) \cap \eta)$}. But since we are assuming \fxnote*{}{$\sup (D(\alpha, p) \cap \eta) = \eta$}, we conclude $\mu \ge \eta$, as we wanted to show.

Let us prove the ``if'' direction. Fix $\beta, \delta < \gamma$ with $\beta \le \delta$ and $q, r \in B$, towards showing that there exists a $\trianglelefteq$-greatest lower bound for $\{(\beta, q), (\delta, r)\}$. If $\beta = \delta$, it is straightforward that $(\beta, q \wedge r)$ is the greatest lower bound. Hence suppose that $\beta < \delta$.

Note that the set $(D(\beta, q) \cup \{\beta\}) \cap D(\delta, r)$ is a closed nonempty subset of $\delta$: it is easy to see that it is closed, as by assumption both $D(\beta, q) \cup \{\beta\}$ and $D(\delta, r)$ are closed in $\delta$; moreover, since $\varrho(0, \beta) = \varrho(0, \delta) = \mathbf{0}$, it follows that \[0 \in (D(\beta, q) \cup \{\beta\}) \cap D(\delta, r),\]
and thus $(D(\beta, q) \cup \{\beta\}) \cap D(\delta, r) \neq\emptyset$.

Let $\alpha$ be the maximum of $(D(\beta, q) \cup \{\beta\}) \cap D(\delta, r)$, which exists since the set is nonempty, closed and bounded in $\delta$.  We claim that $(\alpha, q \wedge r)$ is the $\trianglelefteq$-greatest lower bound of $\{(\beta, q), (\delta, r)\}$. Indeed, pick  an $\eta < \gamma$ and some $p \in B$ such that $(\eta, p) \trianglelefteq (\beta, q), (\delta, r)$, towards showing that $(\eta, p) \trianglelefteq (\alpha, q \wedge r)$. By definition of $\trianglelefteq$, $p \le q \wedge r$. Moreover, as $\eta$ must belong to $(D(\beta, q) \cup \{\beta\}) \cap D(\delta, r)$, we conclude that $\eta \le \alpha$.  If $\eta = \alpha$, then $(\eta, p) = (\alpha, p) \trianglelefteq (\alpha, q \wedge r)$. So suppose $\eta < \alpha$. The following holds:
\[
\varrho(\eta, \alpha) \le \varrho(\eta, \beta) \vee \varrho(\alpha, \beta) \le q,
\]
where the first inequality comes from the subadditivity of $\varrho$ and the second one follows from both $\eta$ and $\alpha$ belonging to $D(\beta, q)$. Analogously,
\[
\varrho(\eta, \alpha) \le \varrho(\eta, \delta) \vee \varrho(\alpha, \delta) \le r.
\]
Thus, $\varrho(\eta, \alpha) \le q \wedge r$. As we already noted that $p \le q \wedge r$, we conclude $(\eta, p) \le (\alpha, q \wedge r)$.
\end{proof}
\fxnote*{}{
\begin{remark}\label{rmk:closedness}
Note that, by the definition of $\trianglelefteq$, the following holds: for every $x = (\alpha, p) \in \gamma \times B$,
\[
\{\mathrm{ht}(y) \mid y \in \gamma \times B \text{ and } y \trianglelefteq x\} = D(\alpha, p) \cup \{\alpha\}.
\]
In particular, if $(\gamma \times B, \trianglelefteq)$ is a lattice, we conclude from Proposition~\ref{prop:specialladder}\ref{prop:specialladder-3} that $\{\mathrm{ht}(y) \mid y \in \gamma \times B \text{ and } y \trianglelefteq x\}$ is a closed set of ordinals for every $x \in \gamma \times B$.
\end{remark}}

\begin{corollary}\label{cor:specialladder}
If $B$ is an $\MJ{n}$-free join-semilattice and $\varrho:[\gamma]^2 \rightarrow B$ is \fxnote*{}{both} transitive and subadditive, then $(\gamma\times B, \trianglelefteq_\varrho)$ is an $\MJ{(n+1)}$-free join-semilattice.
\end{corollary}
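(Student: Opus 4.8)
The plan is to obtain this corollary for free from two results already in place: Proposition~\ref{prop:specialladder}\ref{prop:specialladder-2}, which tells us exactly when $(\gamma\times B, \trianglelefteq_\varrho)$ is a join-semilattice and indeed a quasi-product, and Proposition~\ref{prop:quasi-productfree}, which governs $\MJ{}$-freeness of quasi-products. So the whole argument is a composition of these two black boxes, with only a small amount of bookkeeping in between.

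First I would invoke Proposition~\ref{prop:specialladder}\ref{prop:specialladder-2}: since $\varrho$ is assumed both transitive and subadditive, the structure $(\gamma\times B, \trianglelefteq_\varrho)$ is a join-semilattice, and in fact a quasi-product of $\gamma$ and $B$. Here $\gamma$ is understood with its usual well-ordering, which is itself a join-semilattice because $\alpha \vee \beta = \max(\alpha,\beta)$ always exists. This already delivers the ``join-semilattice'' half of the conclusion, so it remains only to establish $\MJ{(n+1)}$-freeness.

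Next I would observe that the linear order $\gamma$ is $\MJ{2}$-free: any join-subsemilattice isomorphic to $\MJ{2}$ would contain two incomparable elements, but a linear order has none. Together with the hypothesis that $B$ is $\MJ{n}$-free, Proposition~\ref{prop:quasi-productfree}, applied to the parameter pair $2$ and $n$, shows that every quasi-product of $\gamma$ and $B$ is $\MJ{(2+n-1)}$-free, i.e. $\MJ{(n+1)}$-free. Since $(\gamma\times B, \trianglelefteq_\varrho)$ is precisely such a quasi-product by the previous step, this completes the proof.

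I do not expect a genuine obstacle here, as the argument merely chains together existing propositions. The only points deserving a moment's care are the verification that $\gamma$ is $\MJ{2}$-free (which is where the breadth-$1$ nature of linear orders enters, and which pins the parameter on the $\gamma$-factor to exactly $2$) and the index arithmetic $2+n-1 = n+1$. One should also recall that Proposition~\ref{prop:quasi-productfree} is stated for positive integers, so the reasoning applies for $n \ge 1$; the case $n = 0$ is vacuous, since no nonempty join-semilattice is $\MJ{0}$-free.
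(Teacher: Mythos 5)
Your proposal is correct and is exactly the paper's proof, which reads in full ``Immediate by Proposition~\ref{prop:specialladder}\ref{prop:specialladder-2} together with Proposition~\ref{prop:quasi-productfree}.'' You have merely made explicit the two details the paper leaves tacit---that the linear order $\gamma$ is $\MJ{2}$-free and the index arithmetic $2+n-1 = n+1$---and your remark on the vacuity of the case $n=0$ is accurate.
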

\begin{proof}
Immediate by \fxnote*{}{Proposition~\ref{prop:specialladder}\ref{prop:specialladder-2} together with} Proposition~\ref{prop:quasi-productfree}.
\end{proof}

Consider now the following recursive definition, where a special $(0, \kappa)$-semiladder is just the trivial join-semilattice $\{\mathbf{0}\}$.
\fxnote*{}{
\begin{definition}\label{def:specialladder}
Given a positive integer $n$ and an infinite cardinal $\kappa$, an $(n, \kappa)$-(semi)ladder $S$ is \emph{special} if $S= (\gamma\times B, \trianglelefteq_\varrho)$, where $\gamma$ is an ordinal, $B$ is a special $(n-1, \kappa)$-(semi)ladder, and $\varrho:[\gamma]^2\rightarrow B$ satisfies $\varrho(0, \alpha) = \mathbf{0}_B$ for all $\alpha < \gamma$.
\end{definition}

Four remarks are in order. The first one is that the clause $\varrho(0, \alpha) = \mathbf{0}_B$ in Definition~\ref{def:specialladder} is a genuine requirement only in the semiladder case: if $S$ is a (special) ladder---hence a lattice---then $\varrho(0, \alpha) = \mathbf{0}_B$ already holds automatically by Proposition~\ref{prop:specialladder}\ref{prop:specialladder-3}. Thus, the clause needs to be stated explicitly only for special semiladders. Secondly, note that thanks to this clause, every special semiladder has a least element (see Lemma~\ref{lemma:minimum}). Thirdly, it follows from the definition of special semiladders and Proposition~\ref{prop:specialladder}\ref{prop:specialladder-2} that special semiladders, being quasi-products of well-founded join-semilattices, are themselves well-founded (see Lemma~\ref{lemma:wellfoundedness}).} Lastly, observe that a join-semilattice $(P, \trianglelefteq)$ is a special $(1, \kappa)$-semiladder if and only if there exists an ordinal $\alpha \le \kappa$ such that $P =\alpha \times \{\mathbf{0}\}$ with $(\gamma, \mathbf{0}) \trianglelefteq (\beta, \mathbf{0})$ if and only if $\gamma \le \beta< \alpha$. For this reason, we sometimes identify the special $(1, \kappa)$-semiladders with the ordinals less than or equal to $\kappa$ in the following sections.

\section{Proof of Theorem~\ref{thm:main2}}\label{sec:thm2}

In this section, we prove Theorem~\ref{thm:main2}. Let us introduce the combinatorial principle $\square_{\lambda}(\sqleft{\chi})$, where $\lambda$ and $\chi$ are infinite cardinals and $\chi \le \lambda$:

\begin{equation*}\tag*{$\square_{\lambda}(\sqleft{\chi})$}
\begin{split}
\text{There}&\text{ exists a sequence } \langle C_\alpha \mid \alpha < \lambda^+ \text{ and } \mathrm{cf}(\alpha) \ge \chi\rangle \text{ such that:}\\
\text{(i)}& \ C_\alpha \text{ is a closed and unbounded subset of }\alpha;\\
\text{(ii)}& \ \mathrm{otp}(C_\alpha) \le \lambda;\\
\text{(iii)}& \ \text{If } \beta \text{ is a limit point of }C_\alpha \text{ and } \mathrm{cf}(\beta) \ge \chi, \text{ then } C_\beta = C_\alpha \cap \beta.
\end{split}
\end{equation*}

The principle $\square_{\lambda}(\sqleft{\chi})$ is a particular instance of a more general combinatorial principle defined by Brodsky and Rinot in \cite[Definition 1.16]{MR3914943} (see also \cite[Definition 1.7]{MR3692231}). It is a weakening of Jensen's $\square_\lambda$, and as such holds under $\mathbf{V=L}$ for all infinite cardinals $\chi \le \lambda$. Furthermore, note that for every infinite cardinal $\lambda$, the principle $\square_{\lambda}(\sqleft{\omega})$ is \fxnote*{}{equivalent to $\square_\lambda$ (see \cite[Lemma IV.5.1]{MR750828})}, and that $\square_{\lambda}(\sqleft{\lambda})$ trivially holds in $\mathsf{ZFC}$.

\fxnote*{}{Theorem~\ref{thm:main2} is a direct corollary of the next theorem. For regular $\kappa$, Theorem~\ref{thm:main1} already yields $(n+1, \kappa)$-semiladders of cardinality $\kappa^{+n}$ in $\mathsf{ZFC}$. The next theorem---stated for arbitrary infinite $\kappa$---produces \emph{special} $(n+1, \kappa)$-semiladders of cardinality $\kappa^{+n}$ under additional set-theoretic hypotheses.}

\begin{theorem}\label{thm:mainplus2}
Let $\kappa$ be an infinite cardinal and $n\in\omega$. If $\square_{\kappa^{+m}}(\sqleft{\mathrm{cf}(\kappa)})$ holds for every $m < n$, then there exists a special $(n+1, \kappa)$-semiladder of cardinality $\kappa^{+n}$.
\end{theorem}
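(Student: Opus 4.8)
The plan is to prove Theorem~\ref{thm:mainplus2} by induction on $n$, building a \emph{special} $(n+1,\kappa)$-semiladder as an iterated quasi-product $(\gamma \times B, \trianglelefteq_\varrho)$, where $B$ is a special $(n,\kappa)$-semiladder obtained from the inductive hypothesis. The base case $n=0$ is the trivial semiladder $\{\mathbf{0}\}$ (or, reading Definition~\ref{def:specialladder}, a $(1,\kappa)$-semiladder identified with an ordinal $\le\kappa$), which requires no square assumptions. For the inductive step, I assume $\square_{\kappa^{+m}}(\sqleft{\mathrm{cf}(\kappa)})$ for every $m<n$, apply the hypothesis to $n-1$ to obtain a special $(n,\kappa)$-semiladder $B$ of cardinality $\kappa^{+(n-1)}$, set $\gamma = \kappa^{+n} = (\kappa^{+(n-1)})^+$, and use the single instance $\square_{\kappa^{+(n-1)}}(\sqleft{\mathrm{cf}(\kappa)})$ (which is available since $n-1 < n$) to construct a map $\varrho:[\gamma]^2 \to B$. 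By Corollary~\ref{cor:specialladder}, provided $\varrho$ is transitive and subadditive, $(\gamma \times B, \trianglelefteq_\varrho)$ is automatically $\MJ{(n+2)}$-free and a join-semilattice (hence a quasi-product by Proposition~\ref{prop:specialladder}\ref{prop:specialladder-2}); imposing $\varrho(0,\alpha) = \mathbf{0}_B$ makes it special. Since $|\gamma \times B| = \kappa^{+n}\cdot\kappa^{+(n-1)} = \kappa^{+n}$, the cardinality is correct, so the whole construction reduces to defining a suitable $\varrho$.

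The heart of the argument is building $\varrho$ from the square sequence. The guiding analogy, flagged in the text, is Todor\v{c}evi\'{c}'s \emph{walks on ordinals}: using the coherent sequence $\langle C_\alpha \rangle$ witnessing $\square_{\kappa^{+(n-1)}}(\sqleft{\mathrm{cf}(\kappa)})$, I would define $\varrho(\alpha,\beta)$ by recursion along the walk from $\beta$ down to $\alpha$ through the $C$-ladders, accumulating (via $\vee$ in $B$) the information collected at each step—typically the $B$-values attached to the finitely many points visited along the walk. The coherence clause (iii) of the square principle—namely $C_\beta = C_\alpha \cap \beta$ at limit points $\beta$ of $C_\alpha$ with $\mathrm{cf}(\beta) \ge \chi$—is precisely what yields the triangular inequalities: transitivity $\varrho(\alpha,\delta) \le \varrho(\alpha,\beta)\vee\varrho(\beta,\delta)$ and subadditivity $\varrho(\alpha,\beta) \le \varrho(\alpha,\delta)\vee\varrho(\beta,\delta)$ should follow from the way walks from nearby ordinals fellow-travel. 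The clause $\mathrm{otp}(C_\alpha)\le \lambda$ controls the size of the principal ideals: the downward set below $(\alpha,p)$ has height-set $D_\varrho(\alpha,p)\cup\{\alpha\}$, which I must keep of cardinality $<\kappa$. This is where the role of $\chi = \mathrm{cf}(\kappa)$ becomes essential—the square sequence is only defined on ordinals of cofinality $\ge \mathrm{cf}(\kappa)$, and I must arrange matters so that the relevant bookkeeping stays below $\kappa$ by exploiting that $B$'s principal ideals already have cardinality $<\kappa$ and that walks have length bounded in terms of $\mathrm{otp}(C_\alpha)$.

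I anticipate two linked obstacles. The first is \textbf{verifying the principal ideal bound} $|{\trianglelefteq_\varrho}\downarrow(\alpha,p)| < \kappa$: the ideal consists of pairs $(\eta,q)$ with $\eta \in D_\varrho(\alpha,p)\cup\{\alpha\}$ and $q$ below a corresponding element of $B$, so I need both that $D_\varrho(\alpha,p)$ is small and that for each such $\eta$ the fiber in $B$ is controlled by a principal ideal of $B$ (already $<\kappa$ by induction). Making $|D_\varrho(\alpha,p)|<\kappa$ requires the walk/$\varrho$ to be designed so that $\{\eta<\alpha \mid \varrho(\eta,\alpha)\le p\}$ is sparse—this is the delicate combinatorial core, and it is exactly here that $\square_{\kappa^{+(n-1)}}(\sqleft{\mathrm{cf}(\kappa)})$, rather than full $\square$, is both necessary and sufficient. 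The second obstacle is the \textbf{handling of ordinals $\alpha$ with $\mathrm{cf}(\alpha) < \mathrm{cf}(\kappa)$}, for which $C_\alpha$ is undefined: at such $\alpha$ I would fix in advance a cofinal sequence of order type $\mathrm{cf}(\alpha) < \mathrm{cf}(\kappa) \le \kappa$ and route the walk through it, so that the coherence and size estimates degrade gracefully. Checking that transitivity and subadditivity survive this case-split—so that Corollary~\ref{cor:specialladder} applies uniformly—will be the most error-prone bookkeeping, but it is routine once the walk recursion is correctly set up.
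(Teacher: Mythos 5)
Your framing is exactly right---and in outline it is remarkably close to the paper's actual proof: the reduction via Corollary~\ref{cor:specialladder} to constructing a transitive, subadditive $\varrho:[\kappa^{+(n+1)}]^2\rightarrow B$ with $\varrho(0,\alpha)=\mathbf{0}_B$ and $|D_\varrho(\alpha,p)|<\kappa$ is precisely how the paper proceeds, the recursion $\varrho(\alpha,\gamma)=p_\gamma(\mu_\gamma(\alpha))\vee\varrho(\alpha,\theta_\gamma(\mu_\gamma(\alpha)))$ used there is indeed a one-step ``walk'' along $C_\gamma$, and the paper handles ordinals of cofinality $<\mathrm{cf}(\kappa)$ exactly as you suggest, by routing through a fixed cofinal sequence. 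But the proposal stops where the proof begins: you never actually define $\varrho$, and the two ideas that make the construction work are absent. First, the ``$B$-values attached to the points visited along the walk'' must be the coherent \emph{injective} sequences $\vec{p}_\gamma=\langle p_\gamma(\nu)\mid\nu<\lambda_\gamma\rangle$ attached to each club $C_\gamma$, built by canonical ($\lessdot$-least) choices from a fixed global well-ordering so that the square coherence $C_{\theta_\gamma(\nu)}=C_\gamma\cap\theta_\gamma(\nu)$ transfers to $\vec{p}_{\theta_\gamma(\nu)}=\vec{p}_\gamma\upharpoonright\nu$ (Claim~\ref{claim:general0.5}); this coherence is what drives transitivity and subadditivity, and the \emph{injectivity} of $\vec{p}_\gamma$ is the sole source of the sparseness bound, since it gives $|\{\mu<\lambda_\gamma\mid p_\gamma(\mu)\le p\}|\le|{\downarrow}p|$ and hence $|D_\varrho(\gamma,p)|\le|{\downarrow}p|+\aleph_0$ (Claim~\ref{claim:general5}). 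Without an injectivity requirement, nothing prevents the accumulated values from repeating, in which case $D_\varrho(\alpha,p)$ can be as large as $\alpha$ itself and the principal-ideal bound fails; your remark that the square principle is ``both necessary and sufficient'' here papers over exactly this unconstructed mechanism (and necessity is not known---it is among the paper's open questions).

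Second, your claim that the small-cofinality case ``degrades gracefully'' hides a genuine ingredient: at $\gamma$ with $\mathrm{cf}(\gamma)<\mathrm{cf}(\kappa)$ one needs a \emph{single} element $p_\gamma\in B$ above all $\varrho(\gamma_\mu,\gamma_\nu)$, which requires $B$ to be $\mathrm{cf}(\kappa)$-directed---not automatic, but a consequence of Ditor's theorem via Lemma~\ref{lemma:breadth} and Proposition~\ref{prop:directed}, applied to the inductively obtained $B$ of cardinality $\kappa^{+n}$; and one must moreover choose $p_\gamma$ with $|{\downarrow}p_\gamma|\ge\mathrm{cf}(\gamma)$, so that the union over $\mathrm{cf}(\gamma)$-many predecessors is absorbed: $D_\varrho(\gamma,p)$ is trivial unless $p\ge p_\gamma$, and in that case $\mathrm{cf}(\gamma)\cdot|{\downarrow}p|+\aleph_0=|{\downarrow}p|+\aleph_0$. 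Neither the directedness nor this absorption trick appears in your plan, and without them condition $|D_\varrho(\alpha,p)|\le|{\downarrow}p|+\aleph_0$ breaks at the very ordinals you propose to treat by ad hoc cofinal sequences (for $\kappa=\aleph_0$ a further separate finiteness argument, again resting on injectivity of $\vec{p}_\gamma$, is needed). So the proposal is a correct strategic outline with the combinatorial core---the definition of the $p$-sequences and the verification of the triangle inequalities and cardinality bounds by simultaneous induction on $\gamma$---left as a gap that is the actual content of the theorem.
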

\begin{proof}
Let us fix an infinite cardinal $\kappa$. We prove the result by induction on $n\in\omega$. The case $n=0$ is trivially true, as $\kappa$ is, in particular, a special $(1, \kappa)$-ladder of cardinality $\kappa$. So let us fix a special $(n+1, \kappa)$-semiladder $(B, \le)$ of cardinality $\kappa^{+n}$ and a $\square_{\kappa^{+n}}(\sqleft{\mathrm{cf}(\kappa)})$-sequence $\langle C_\alpha \mid \alpha < \kappa^{+(n+1)} \text{ and } \mathrm{cf}(\alpha) \ge \mathrm{cf}(\kappa)\rangle$, towards constructing a special $(n+2, \kappa)$-semiladder of cardinality $\kappa^{+(n+1)}$.

We want to define a map $\varrho: [\kappa^{+(n+1)}]^2\rightarrow B$ such that, for every $\alpha < \kappa^{+(n+1)}$ and $p \in B$:

\begin{enumerate}[label=(\alph*)]
\itemsep0.3em
\item\label{condition:general1} $\varrho$ is \fxnote*{}{both} transitive and subadditive.
\item\label{condition:general2} $|D_\varrho(\alpha, p)| \le |{\downarrow}p|+ \aleph_0$.
\item\label{condition:general3} $|D_\varrho(\alpha, p)| < \kappa$.
\item\label{condition:general4} \fxnote*{}{$\varrho(0, \alpha) = \mathbf{0}_B$.}
\end{enumerate}

Note that \ref{condition:general2} implies \ref{condition:general3} when $\kappa$ is uncountable, as we are assuming that the principal ideals of $B$ have cardinality $< \kappa$. Indeed, condition \ref{condition:general3} plays a  role only when $\kappa = \aleph_0$.

Let us first argue that if we manage to construct a map $\varrho$ satisfying \ref{condition:general1}-\ref{condition:general4}, then the induced $(\kappa^{+(n+1)}\times B, \trianglelefteq_\varrho)$ is a special $(n+2, \kappa)$-semiladder. By Corollary~\ref{cor:specialladder}, $(\kappa^{+(n+1)}\times B, \trianglelefteq_\varrho)$ is an $\MJ{(n+3)}$-free join-semilattice. It remains to argue that its principal ideals have cardinality $< \kappa$.  If $\alpha < \kappa^{+(n+1)}$ and $p \in B$, then, by definition of $\trianglelefteq_\varrho$,
\[
{\trianglelefteq_\varrho} \downarrow (\alpha, p) = (D_\varrho(\alpha, p) \cup \{\alpha\}) \times ({\downarrow} p).
\]
It follows from condition~\ref{condition:general3} and our hypotheses on $B$ that the principal ideal of $(\alpha, p)$ in $(\kappa^{+(n+1)}\times B, \trianglelefteq_\varrho)$ has cardinality $< \kappa$.

We define $\varrho \upharpoonright \gamma$ by induction on $\gamma < \kappa^{+(n+1)}$, and in doing so, we make sure that conditions \ref{condition:general1}-\ref{condition:general4} are satisfied by $\varrho \upharpoonright \gamma$. For clarity, when we say ``$\varrho \upharpoonright \gamma$ satisfies \ref{condition:general1}-\ref{condition:general4}'' we mean that the map $\varrho \upharpoonright \gamma$ satisfies statements \ref{condition:general1}-\ref{condition:general4} for all $\alpha < \gamma$ and $p \in B$.

We fix a well-ordering $\lessdot$ on a sufficiently large set, which \fxnote*{}{will ensure} uniformity in our choices during the inductive construction.

If $\lambda$ is a limit and we have defined $\varrho \upharpoonright \gamma$ so to satisfy \ref{condition:general1}-\ref{condition:general4} for every $\gamma < \lambda$, then clearly $\varrho \upharpoonright \lambda = \bigcup_{\gamma < \lambda} \varrho \upharpoonright \gamma$ still satisfies \ref{condition:general1}-\ref{condition:general4}. Thus, let us take care of the successor case. Suppose that we have defined $\varrho$ on $[\gamma]^2$, towards extending it to $[\gamma+1]^2$. \fxnote*{}{Recall our convention $\varrho(\alpha, \alpha) = \mathbf{0}_B$}. There are two cases: 
\begin{description}

\item[Case 1] $\mathrm{cf}(\gamma) < \mathrm{cf}(\kappa)$:  Fix an increasing sequence $\langle \gamma_\nu \mid \nu < \mathrm{cf}(\gamma) \rangle$ cofinal in $\gamma$. Let $p_\gamma$ be an upper bound in $B$ of the set
\[
 \big\{\varrho(\gamma_\mu, \gamma_\nu) \mid \mu \le \nu < \mathrm{cf}(\gamma)\big\}
\]
such that $|{\downarrow} p_\gamma| \ge \mathrm{cf}(\gamma)$. Note that such an upper bound always exists \fxnote*{}{because $B$ is $\mathrm{cf}(\kappa)$-directed by Lemma~\ref{lemma:breadth} and Proposition~\ref{prop:directed} and, moreover, $|B| = \kappa^{+n} \ge \mathrm{cf}(\kappa) > \mathrm{cf}(\gamma)$. First set $\varrho(0, \gamma) = \mathbf{0}_B$. Then, for each $\alpha$ with $0 < \alpha < \gamma$,} let $\nu < \mathrm{cf}(\gamma)$ be the least \fxnote*{}{ordinal} such that $\alpha \le \gamma_\nu$ and set $\varrho(\alpha, \gamma) = p_\gamma \vee \varrho(\alpha, \gamma_\nu)$.

\item[Case 2] $\mathrm{cf}(\gamma) \ge \mathrm{cf}(\kappa)$: Let $\vec{\theta}_\gamma=\langle \theta_\gamma(\nu) \mid \nu < \lambda_\gamma\rangle$ be the increasing enumeration of $C_\gamma$. We inductively define a sequence $\vec{p}_\gamma= \langle p_\gamma (\nu) \mid \nu < \lambda_\gamma \rangle$ of elements of $B$ as follows:
\begin{enumerate}[label={\upshape (\roman*)}]
\itemsep0.3em
\item Let $p_\gamma(0) = \mathbf{0}_B$.

\item If $\mathrm{cf}(\nu) < \mathrm{cf}(\kappa)$, then let $\langle \nu_\iota \mid \iota < \mathrm{cf}(\nu)\rangle$ be the $\lessdot$-least increasing sequence cofinal in $\nu$ and let $p_\gamma(\nu)$ be the $\lessdot$-least $p \in B$ such that $p$ is an upper bound in $B$ of the set 
\begin{equation}\label{eq:generalcase2}
\big\{p_\gamma(\nu_\iota) \vee \varrho(\theta_\gamma(\nu_\iota),\theta_\gamma(\nu))\mid \iota < \mathrm{cf}(\nu)\big\},
\end{equation}
and such that $p \neq p_\gamma(\mu)$ for every $\mu < \nu$. Such a $p$ always exists: as $B$ is $\mathrm{cf}(\kappa)$-directed, there exists $p' \in B$ which is an upper bound of the set \eqref{eq:generalcase2}; moreover, since ${\uparrow} p'$ has cardinality $\kappa^{+n}$, and since $|\nu| < \kappa^{+n}$, we can find a $p \in B$ such that $p' \le p$ and $p \neq p_\gamma(\mu)$ for every $\mu < \nu$.

\item If $\mathrm{cf}(\nu) \ge \mathrm{cf}(\kappa)$, then let $p_\gamma(\nu)$ be the $\lessdot$-least $p \in B$ such that $p \neq p_\gamma(\mu)$ for all  $\mu < \nu$. \vspace{0.3em}
\end{enumerate}

Now that we have defined the sequence $\vec{p}_\gamma$, we can extend $\varrho$: for each $\alpha < \gamma$, let $\mu_\gamma(\alpha)$ be the least $\nu < \lambda_\gamma$ such that $\alpha \le \theta_\gamma(\nu)$ and set $\varrho(\alpha, \gamma) = p_\gamma(\mu_\gamma(\alpha)) \vee \varrho(\alpha, \theta_\gamma(\mu_\gamma(\alpha)))$.
\end{description}

We assume that $\varrho \upharpoonright \gamma$ satisfies \ref{condition:general1}-\ref{condition:general4} \fxnote*{}{and has been defined following the inductive construction described above}. The rest of the proof consists of showing that $\varrho \upharpoonright (\gamma+1)$ also satisfies \ref{condition:general1}-\ref{condition:general4}.

\fxnote*{}{
\begin{claim}\label{claim:general0}
$\varrho(0, \gamma) = \mathbf{0}_B$.
\end{claim}
\begin{proof}
If $\mathrm{cf}(\gamma) < \mathrm{cf}(\kappa)$, then the definition of $\varrho \upharpoonright (\gamma+1)$ directly implies our claim. Now assume $\mathrm{cf}(\gamma) \ge \mathrm{cf}(\kappa)$. Clearly, $\mu_\gamma(0) = 0$; therefore, by definition of $\varrho \upharpoonright (\gamma+1)$, $\varrho(0, \gamma) = p_\gamma(0) \vee \varrho(0, \theta_\gamma(0))$. By construction, $p_\gamma(0) = \mathbf{0}_B$ and, by inductive assumption, $\varrho(0, \theta_\gamma(0)) = \mathbf{0}_B$. Hence, our claim follows.
\end{proof}
}

\fxnote*{}{
The next claim extracts from our square principle exactly what is needed for our proof.
\begin{claim}\label{claim:general0.5}
Suppose that $\mathrm{cf}(\gamma) \ge \mathrm{cf}(\kappa)$. Then, for every $\nu < \lambda_\gamma$ with $\mathrm{cf}(\nu) \ge \mathrm{cf}(\kappa)$, $\vec{\theta}_{\theta_\gamma(\nu)} = \vec{\theta}_\gamma \upharpoonright \nu$ and $\vec{p}_{\theta_\gamma(\nu)} = \vec{p}_\gamma \upharpoonright \nu$.
\end{claim}
\begin{proof}
It follows directly from our assumption $\mathrm{cf}(\nu) \ge \mathrm{cf}(\kappa)$ and from the closedness of $C_\gamma$ that $\theta_\gamma(\nu)$ is a limit point of $C_\gamma$ and $\mathrm{cf}(\theta_\gamma(\nu)) = \mathrm{cf}(\nu) \ge \mathrm{cf}(\kappa)$; by the coherence property of our square sequence, $C_{\theta_\gamma(\nu)} = C_\gamma \cap \theta_\gamma(\nu)$, and thus $\vec{\theta}_{\theta_\gamma(\nu)} = \vec{\theta}_\gamma \upharpoonright \nu$. We now claim that $\vec{p}_{\theta_\gamma(\nu)} = \vec{p}_\gamma \upharpoonright \nu$. Note the following: fix $\alpha\le\gamma$ with $\mathrm{cf}(\alpha) \ge \mathrm{cf}(\kappa)$ and $\mu < \lambda_\alpha$; then, according to the inductive definition of $\vec{p}_\alpha$ expressed by (i)--(iii) in Case 2 above, the initial segment $\vec{p}_\alpha \upharpoonright \mu$ is uniquely determined by the sequence $\vec{\theta}_{\alpha} \upharpoonright \mu$ and by the map $\varrho \upharpoonright \theta_\alpha(\mu)$, besides other trivially fixed parameters (among which there is the well-ordering $\lessdot$). In particular, since $\vec{\theta}_{\theta_\gamma(\nu)} = \vec{\theta}_\gamma \upharpoonright \nu$, we conclude that the two sequences $\vec{p}_{\theta_\gamma(\nu)}$ and $\vec{p}_\gamma \upharpoonright \nu$ coincide, as they satisfy the same inductive definition with the same parameters.
\end{proof}
}

\begin{claim}\label{claim:general1}
Suppose that $\mathrm{cf}(\gamma) \ge \mathrm{cf}(\kappa)$.  Then $\varrho(\theta_\gamma(\mu), \theta_\gamma(\nu)) \le p_\gamma(\mu) \vee p_\gamma(\nu)$ for every  $\mu < \nu < \lambda_\gamma$.
\end{claim} 
\begin{proof}
We prove our claim by induction on $\nu$. It vacuously holds when $\nu = 0$. 

Suppose $\nu > 0$ and $\mathrm{cf}(\nu) < \mathrm{cf}(\kappa)$. Pick any $\iota < \mathrm{cf}(\nu)$  such that $\mu \le \nu_\iota$.  By transitivity of $\varrho \upharpoonright \gamma$, \[\varrho(\theta_\gamma(\mu), \theta_\gamma(\nu)) \le \varrho(\theta_\gamma(\mu), \theta_\gamma(\nu_\iota)) \vee \varrho(\theta_\gamma(\nu_\iota), \theta_\gamma(\nu)).\]  By induction hypothesis, $\varrho(\theta_\gamma(\mu), \theta_\gamma(\nu_\iota)) \le p_\gamma(\mu) \vee p_\gamma(\nu_\iota)$.  Moreover, by definition of $p_\gamma(\nu)$ (see \eqref{eq:generalcase2}), \[p_\gamma(\nu_\iota) \vee \varrho(\theta_\gamma(\nu_\iota), \theta_\gamma(\nu)) \le p_\gamma(\nu).\] Combining these observations, we get $\varrho(\theta_\gamma(\mu), \theta_\gamma(\nu)) \le p_\gamma(\mu) \vee p_\gamma(\nu)$.

Finally, suppose that $\mathrm{cf}(\nu) \ge \mathrm{cf}(\kappa)$. 
The following holds:
\begin{equation}\label{eq:claim:general1}
\varrho(\theta_\gamma(\mu), \theta_\gamma(\nu)) = \varrho(\theta_{\theta_\gamma(\nu)}(\mu), \theta_\gamma(\nu)) = p_{\theta_\gamma(\nu)}(\mu) = p_\gamma(\mu),
\end{equation}
where the first and last equalities follow from \fxnote*{}{Claim~\ref{claim:general0.5}}, and the middle one comes directly from the definition of $\varrho \upharpoonright (\theta_\gamma(\nu)+1)$.
It follows from \eqref{eq:claim:general1} that $\varrho(\theta_\gamma(\mu), \theta_\gamma(\nu)) = p_\gamma(\mu) \le p_\gamma(\mu) \vee p_\gamma(\nu)$.
\end{proof}

\begin{claim}\label{claim:general2}
Suppose that $\mathrm{cf}(\gamma) \ge \mathrm{cf}(\kappa)$. For every $\alpha < \gamma$ and every $\nu$ with $\mu_\gamma(\alpha) \le \nu < \lambda_\gamma$, we have $p_\gamma(\mu_\gamma(\alpha)) \le p_\gamma(\nu) \vee \varrho(\alpha, \theta_\gamma(\nu))$.
\end{claim}
\begin{proof}
We prove the claim by induction on $\nu$. Clearly, the claim holds when $\nu = \mu_\gamma(\alpha)$. 

Suppose that $\mu_\gamma(\alpha) < \nu$ and $\mathrm{cf}(\nu) < \mathrm{cf}(\kappa)$. Fix some $\iota < \mathrm{cf}(\nu)$ such that $\mu_\gamma(\alpha) \le \nu_\iota$. By induction hypothesis, $p_\gamma(\mu_\gamma(\alpha)) \le p_\gamma(\nu_\iota) \vee \varrho(\alpha, \theta_\gamma(\nu_\iota))$. By subadditivity of $\varrho \upharpoonright \gamma$, we have $\varrho(\alpha, \theta_\gamma(\nu_\iota)) \le \varrho(\alpha, \theta_\gamma(\nu)) \vee \varrho(\theta_\gamma(\nu_\iota), \theta_\gamma(\nu))$. But since, by definition, $p_\gamma(\nu_\iota) \vee \varrho(\theta_\gamma(\nu_\iota), \theta_\gamma(\nu)) \le p_\gamma(\nu)$, we conclude that $p_\gamma(\mu_\gamma(\alpha)) \le p_\gamma(\nu) \vee \varrho(\alpha, \theta_\gamma(\nu))$.

Finally, suppose that $\mu_\gamma(\alpha) < \nu$ and $\mathrm{cf}(\nu) \ge \mathrm{cf}(\kappa)$. \fxnote*{}{The following holds:
\begin{align*}
\varrho(\alpha, \theta_\gamma(\nu)) &= p_{\theta_\gamma(\nu)}(\mu_{\theta_\gamma(\nu)}(\alpha)) \vee \varrho(\alpha, \theta_{\theta_\gamma(\nu)}(\mu_{\theta_\gamma(\nu)}(\alpha)))\\
&= p_\gamma(\mu_\gamma(\alpha)) \vee \varrho(\alpha, \theta_\gamma(\mu_\gamma(\alpha))),
\end{align*}
where the first equality follows directly from the definition of $\varrho \upharpoonright \nobreak (\theta_\gamma(\nu)+\nobreak 1)$, and the second one is a direct consequence of Claim~\ref{claim:general0.5}---by looking at the definition of $\mu_\gamma(\alpha)$ at the end of Case 2, note that $\mu_{\theta_\gamma(\nu)}(\alpha) = \mu_\gamma(\alpha)$ directly follows from $\vec{\theta}_{\theta_\gamma(\nu)} = \vec{\theta}_\gamma \upharpoonright \nu$. }
Thus $p_\gamma(\mu_\gamma(\alpha)) \le \varrho(\alpha, \theta_\gamma(\nu))$ and, \textit{a fortiori}, $p_\gamma(\mu_\gamma(\alpha)) \le p_\gamma(\nu) \vee \varrho(\alpha, \theta_\gamma(\nu))$.
\end{proof}

\begin{claim}\label{claim:general3}
$\varrho \upharpoonright (\gamma+1)$ is transitive.
\end{claim}
\begin{proof}
Since we are assuming that $\varrho \upharpoonright \gamma$ is transitive, it suffices to show that $\varrho(\alpha, \gamma) \le \varrho(\alpha, \beta) \vee \varrho(\beta, \gamma)$ for every $\alpha, \beta$ with $\alpha < \beta < \gamma$. \fxnote*{}{The case $\alpha = 0$ is trivial, as $\varrho(0, \gamma) = \mathbf{0}_B$ by Claim~\ref{claim:general0}.} Hence, fix $\alpha,\beta$ with $0 < \alpha < \beta < \gamma$.

First suppose that $\mathrm{cf}(\gamma) < \mathrm{cf}(\kappa)$. Let $\mu, \nu$ be the least such that $\alpha \le \gamma_\mu$ and $\beta \le \gamma_\nu$, respectively.  Clearly, $\mu \le \nu$. If $\mu = \nu$, our claim follows straightforwardly from the transitivity of $\varrho \upharpoonright \gamma$. Indeed we would have $\varrho(\alpha, \gamma_{\mu}) \le \varrho(\alpha, \beta) \vee \varrho(\beta, \gamma_{\mu}) = \varrho(\alpha, \beta) \vee \varrho(\beta, \gamma_{\nu})$, and thus 
\[
\varrho(\alpha, \gamma) = p_\gamma \vee \varrho(\alpha, \gamma_{\mu}) \le p_\gamma \vee \varrho(\alpha, \beta) \vee \varrho(\beta, \gamma_{\nu}) = \varrho(\alpha, \beta) \vee \varrho(\beta, \gamma),
\]
where the equalities hold by definition of $\varrho \upharpoonright (\gamma +1)$. Hence suppose that $\mu < \nu$. In this case $\gamma_{\mu} < \beta$ must hold by the minimality of $\nu$. The following holds:
\begin{align*}
\varrho(\alpha, \gamma) &= p_\gamma \vee \varrho(\alpha, \gamma_\mu)\\
&\le p_\gamma \vee \varrho(\alpha, \beta) \vee \varrho(\gamma_\mu, \beta)\\
&\le p_\gamma \vee \varrho(\alpha, \beta) \vee \varrho(\gamma_\mu, \gamma_\nu) \vee \varrho(\beta, \gamma_\nu)\\
&= p_\gamma \vee \varrho(\alpha, \beta) \vee\varrho(\beta, \gamma_\nu)\\
&= \varrho(\alpha, \beta) \vee \varrho(\beta, \gamma),
\end{align*}
where the first and last equality hold by definition of $\varrho \upharpoonright (\gamma+1)$; the two inequalities follow from the subadditivity of $\varrho \upharpoonright \gamma$ and, lastly, the second equality holds because $\varrho(\gamma_\mu, \gamma_\nu) \le p_\gamma$ by definition of $p_\gamma$.

Now suppose that $\mathrm{cf}(\gamma) \ge \mathrm{cf}(\kappa)$ and $\mu_\gamma(\alpha) = \mu_\gamma(\beta)$. By the transitivity of $\varrho \upharpoonright \gamma$ we have
\[
\varrho(\alpha, \theta_\gamma(\mu_\gamma(\alpha))) \le \varrho(\alpha, \beta) \vee \varrho(\beta, \theta_\gamma(\mu_\gamma(\alpha)))
\]
and thus
\begin{align*}
\varrho(\alpha, \gamma) &= p_\gamma(\mu_\gamma(\alpha)) \vee \varrho(\alpha, \theta_\gamma(\mu_\gamma(\alpha)))\\
&\le p_\gamma(\mu_\gamma(\alpha)) \vee \varrho(\alpha, \beta) \vee \varrho(\beta, \theta_\gamma(\mu_\gamma(\alpha)))\\
&= \varrho(\alpha, \beta) \vee p_\gamma(\mu_\gamma(\beta)) \vee \varrho(\beta, \theta_\gamma(\mu_\gamma(\beta)))\\
&= \varrho(\alpha, \beta) \vee \varrho(\beta, \gamma),
\end{align*}
where the second equality comes from the hypothesis $\mu_\gamma(\alpha) = \mu_\gamma(\beta)$ and the other two equalities hold by definition of $\varrho \upharpoonright (\gamma+1)$.

Finally, suppose that $\mathrm{cf}(\gamma) \ge \mathrm{cf}(\kappa)$ and $\mu_\gamma(\alpha) < \mu_\gamma(\beta)$. Note that $\theta_\gamma(\mu_\gamma(\alpha))$ must be smaller than $\beta$ by the minimality of $\mu_\gamma(\beta)$. The following holds:
\begin{equation}\label{eq:general3-1}
\begin{split}
\varrho(\alpha, \gamma) &= p_\gamma(\mu_\gamma(\alpha)) \vee \varrho(\alpha, \theta_\gamma(\mu_\gamma(\alpha)))\\
&\le p_\gamma(\mu_\gamma(\alpha)) \vee \varrho(\alpha, \beta) \vee \varrho(\theta_\gamma(\mu_\gamma(\alpha)), \beta)\\
&\le  p_\gamma(\mu_\gamma(\alpha)) \vee \varrho(\alpha, \beta) \vee \varrho(\theta_\gamma(\mu_\gamma(\alpha)), \theta_\gamma(\mu_\gamma(\beta))) \vee \varrho(\beta, \theta_\gamma(\mu_\gamma(\beta)))\\
&\le p_\gamma(\mu_\gamma(\alpha)) \vee \varrho(\alpha, \beta) \vee p_\gamma(\mu_\gamma(\beta)) \vee \varrho(\beta, \theta_\gamma(\mu_\gamma(\beta)))\\
&= p_\gamma(\mu_\gamma(\alpha)) \vee \varrho(\alpha, \beta) \vee \varrho(\beta,\gamma),
\end{split}
\end{equation}
where the first two inequalities hold because of the subadditivity of $\varrho \upharpoonright \gamma$, while the last inequality holds by Claim~\ref{claim:general1}. To finish the argument, note the following:
\begin{equation}\label{eq:general3-2}
\begin{split}
p_\gamma(\mu_\gamma(\alpha)) &\le p_\gamma(\mu_\gamma(\beta)) \vee \varrho(\alpha, \theta_\gamma(\mu_\gamma(\beta)))\\
&\le p_\gamma(\mu_\gamma(\beta)) \vee \varrho(\alpha, \beta) \vee \varrho(\beta, \theta_\gamma(\mu_\gamma(\beta)))\\
&= \varrho(\alpha, \beta) \vee \varrho(\beta,\gamma),
\end{split}
\end{equation}
where the first inequality holds by Claim~\ref{claim:general2} and the second inequality follows from the transitivity of $\varrho \upharpoonright \gamma$. By combining \eqref{eq:general3-1} and \eqref{eq:general3-2} we get $\varrho(\alpha, \gamma) \le \varrho(\alpha, \beta) \vee\nobreak \varrho(\beta, \gamma)$.
\end{proof}

\begin{claim}\label{claim:general4}
$\varrho \upharpoonright (\gamma+1)$ is subadditive.
\end{claim}
\begin{proof}
Since we are assuming that $\varrho \upharpoonright \gamma$ is subadditive, it suffices to show that $\varrho(\alpha, \beta) \le \varrho(\alpha, \gamma) \vee \varrho(\beta, \gamma)$ for every $\alpha, \beta$ with $\alpha < \beta < \gamma$. \fxnote*{}{The case $\alpha = 0$ is trivial, as $\varrho(0, \beta) = \mathbf{0}_B$ by induction hypothesis.} Hence fix some $\alpha, \beta$ with $0< \alpha < \beta < \gamma$.

First suppose that $\mathrm{cf}(\gamma) < \mathrm{cf}(\kappa)$. Let $\mu, \nu$ be the least such that $\alpha \le \gamma_\mu$ and $\beta \le \gamma_\nu$, respectively. If $\mu = \nu$, our claim follows from the subadditivity of $\varrho \upharpoonright \gamma$. Indeed we would have $\varrho(\alpha,\beta) \le \varrho(\alpha, \gamma_\mu) \vee \varrho(\beta, \gamma_\mu) = \varrho(\alpha, \gamma_\mu) \vee \varrho(\beta, \gamma_\nu)$ and, \textit{a fortiori},
\[
\varrho(\alpha,\beta) \le \varrho(\alpha, \gamma_\mu) \vee \varrho(\beta, \gamma_\nu) \vee p_\gamma = \varrho(\alpha, \gamma) \vee \varrho(\beta, \gamma),
\]
where the equality directly follows from the definition of $\varrho \upharpoonright (\gamma+1)$. Suppose now that $\mu < \nu$. In this case, $\gamma_{\mu}$  must be smaller than $\beta$ by the minimality of $\nu$. The following holds:
\begin{align*}
\varrho(\alpha, \beta) &\le \varrho(\alpha, \gamma_\mu) \vee \varrho(\gamma_\mu, \beta)\\
&\le \varrho(\alpha, \gamma_\mu) \vee \varrho(\gamma_\mu, \gamma_\nu) \vee \varrho(\beta, \gamma_\nu)\\
&\le \varrho(\alpha, \gamma_\mu) \vee p_\gamma \vee \varrho(\beta, \gamma_\nu)\\
&= \varrho(\alpha, \gamma) \vee \varrho(\beta, \gamma),
\end{align*}
where the first inequality holds by the transitivity of $\varrho \upharpoonright \gamma$, the second inequality instead holds by the subadditivity of $\varrho \upharpoonright \gamma$, and the last one follows from $\varrho(\gamma_\mu, \gamma_\nu) \le p_\gamma$, which holds by definition of $p_\gamma$.

Now suppose that $\mathrm{cf}(\gamma) \ge \mathrm{cf}(\kappa)$ and $\mu_\gamma(\alpha) = \mu_\gamma(\beta)$. By assumption and the subadditivity of $\varrho \upharpoonright \gamma$, 
\begin{align*}
\varrho(\alpha, \beta) &\le \varrho(\alpha, \theta_\gamma(\mu_\gamma(\alpha))) \vee \varrho(\beta, \theta_\gamma(\mu_\gamma(\alpha)))\\
&=\varrho(\alpha, \theta_\gamma(\mu_\gamma(\alpha))) \vee \varrho(\beta, \theta_\gamma(\mu_\gamma(\beta)))
\end{align*}
and, \textit{a fortiori},
\begin{align*}
\varrho(\alpha, \beta) &\le \varrho(\alpha, \theta_\gamma(\mu_\gamma(\alpha))) \vee \varrho(\beta, \theta_\gamma(\mu_\gamma(\beta))) \vee p_\gamma(\mu_\gamma(\alpha))\\
&= \varrho(\alpha, \gamma) \vee \varrho(\beta, \gamma).
\end{align*}

Finally, suppose $\mathrm{cf}(\gamma) \ge \mathrm{cf}(\kappa)$ and $\mu_\gamma(\alpha) < \mu_\gamma(\beta)$. Note that $\theta_\gamma(\mu_\gamma(\alpha))$ must be smaller than $\beta$ by the minimality of $\mu_\gamma(\beta)$. The following holds:
\begin{align*}
\varrho(\alpha, \beta) &\le \varrho(\alpha, \theta_\gamma(\mu_\gamma(\alpha))) \vee \varrho(\theta_\gamma(\mu_\gamma(\alpha)), \beta)\\
&\le \varrho(\alpha, \theta_\gamma(\mu_\gamma(\alpha))) \vee \varrho(\theta_\gamma(\mu_\gamma(\alpha)), \theta_\gamma(\mu_\gamma(\beta))) \vee \varrho(\beta, \theta_\gamma(\mu_\gamma(\beta)))\\
&\le \varrho(\alpha, \theta_\gamma(\mu_\gamma(\alpha))) \vee p_\gamma(\mu_\gamma(\alpha)) \vee p_\gamma(\mu_\gamma(\beta)) \vee \varrho(\beta, \theta_\gamma(\mu_\gamma(\beta)))\\
&= \varrho(\alpha, \gamma) \vee \varrho(\beta, \gamma),
\end{align*}
where the first inequality follows from the transitivity of $\varrho \upharpoonright \gamma$, the second one follows from the subadditivity of $\varrho \upharpoonright \gamma$, while the last holds by Claim~\ref{claim:general1}.
\end{proof}
\begin{claim}\label{claim:general5}
$|D_\varrho(\gamma, p)| \le |{\downarrow}p| + \aleph_0$ for every $p \in B$.
\end{claim}
\begin{proof}
Let us assume first that $\mathrm{cf}(\gamma) < \mathrm{cf}(\kappa)$. If $p_\gamma \not\le p$ then, by definition of $\varrho \upharpoonright (\gamma+1)$, \fxnote*{}{$D_\varrho(\gamma, p) = \{0\}$}. Thus, we can suppose $p_\gamma \le p$. \fxnote*{}{ The following is a direct consequence of the definition of $\varrho \upharpoonright (\gamma+1)$ and of  the fact that $\varrho \upharpoonright \gamma$ satisfies \ref{condition:general4} by induction hypothesis,
\begin{equation}\label{eq:general5-1}
D_\varrho(\gamma, p) \subseteq \bigcup \big\{D_\varrho(\gamma_\mu, p) \cup \{\gamma_\mu\} \mid \mu < \mathrm{cf}(\gamma)\big\}.
\end{equation}
} By induction hypothesis, $\varrho \upharpoonright \gamma$ satisfies \ref{condition:general2}, and therefore $|D_\varrho(\gamma_\mu, p)| \le |{\downarrow}p|+\nobreak \aleph_0$ for every $\mu < \mathrm{cf}(\gamma)$. This  observation, together with \eqref{eq:general5-1}, implies that $|D_\varrho(\gamma, p)| \le \mathrm{cf}(\gamma) \cdot |{\downarrow}p| + \aleph_0$. But since, by the definition of $p_\gamma$, $|{\downarrow}p_\gamma| \ge \mathrm{cf}(\gamma)$, we conclude that $|D_\varrho(\gamma, p)| \le |{\downarrow}p| + \aleph_0$.

Now assume $\mathrm{cf}(\gamma) \ge \mathrm{cf}(\kappa)$. \fxnote*{}{The following is a direct consequence of the definition of $\varrho \upharpoonright (\gamma+1)$:
\begin{equation}\label{eq:general5-2}
D_\varrho(\gamma, p) \subseteq \bigcup \big\{D_\varrho(\theta_\gamma(\mu), p) \cup \{\theta_\gamma(\mu)\} \mid \mu < \lambda_\gamma \text{ and } p_\gamma(\mu) \le p\big\}.
\end{equation}}
By induction hypothesis, $\varrho \upharpoonright \gamma$ satisfies \ref{condition:general2}, and therefore $|D_\varrho(\theta_\gamma(\mu), p)| \le |{\downarrow}p|+\nobreak \aleph_0$ for every $\mu < \lambda_\gamma$. This last observation, together with \eqref{eq:general5-2}, implies that 
\[
|D_\varrho(\gamma, p)| \le |\{\mu < \lambda_\gamma \mid p_\gamma(\mu) \le p\}| \cdot |{\downarrow}p| + \aleph_0.
\]
By construction, $\vec{p}_\gamma$ is injective, and therefore $|\{\mu < \lambda_\gamma \mid p_\gamma(\mu) \le p\}| \le |{\downarrow}p|$. Thus, we conclude that $|D_\varrho(\gamma, p)| \le |{\downarrow}p| + \aleph_0$.
\end{proof}
\fxnote*{}{We note that \eqref{eq:general5-1} and \eqref{eq:general5-2} can actually be proven to be equalities by an argument analogous to the one employed in Claim~\ref{claim:ladder8}.}
\begin{claim}\label{claim:general6}
$|D_\varrho(\gamma, p)| < \kappa$ for every $p \in B$.
\end{claim}
\begin{proof}
If $\kappa$ is uncountable, the result follows directly from Claim~\ref{claim:general5} and our hypotheses on $B$. So assume $\kappa = \aleph_0$. 

If $\gamma = \beta+1$ for some $\beta$, then, by \eqref{eq:general5-1}, \fxnote*{}{$D_\varrho(\gamma, p) \subseteq D_\varrho(\beta, p) \cup \{\beta\}$}. Since by induction hypothesis $D_\varrho(\beta, p)$ is finite, we conclude that also $D_\varrho(\gamma, p)$ is finite.

If $\gamma$ is a limit, then it follows from \eqref{eq:general5-2} and from the injectivity of $\vec{p}_\gamma$ that $D_\varrho(\gamma, p)$, being a finite union of finite sets, is finite. 
\end{proof}

\end{proof}

\section{Proof of Theorem~\ref{thm:main3}}\label{sec:thm3}

This section is devoted to the proof of the following theorem, which implies Theorem~\ref{thm:main3}.

\begin{theorem}\label{thm:mainplus3}
Let $\kappa$ be an infinite cardinal and $n\in \omega$. If $\square_{\kappa^{+m}}$ holds for every $m < n$, then there exists a special $(n+1, \kappa)$-ladder of cardinality $\kappa^{+n}$.
\end{theorem}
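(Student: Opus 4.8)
The plan is to run the induction on $n$ from the proof of Theorem~\ref{thm:mainplus2}, but now carrying along an additional invariant that upgrades the output from a semiladder to a ladder. The base case $n=0$ is trivial, since $\kappa$ is a special $(1,\kappa)$-ladder. For the inductive step I would fix a special $(n+1,\kappa)$-ladder $(B,\le)$ of cardinality $\kappa^{+n}$---which is in particular a lattice and, being a special ladder, is well-founded and has a least element---together with a $\square_{\kappa^{+n}}$-sequence. Since $\square_{\kappa^{+n}}=\square_{\kappa^{+n}}(\sqleft{\omega})$, this sequence $\langle C_\alpha\rangle$ is defined at \emph{every} limit $\alpha<\kappa^{+(n+1)}$ and its coherence clause applies at \emph{every} limit point of each $C_\alpha$; this full coherence is exactly the extra strength over Theorem~\ref{thm:mainplus2} that I expect to need. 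I would then build $\varrho\colon[\kappa^{+(n+1)}]^2\to B$ satisfying conditions \ref{condition:general1}--\ref{condition:general4} together with the \textbf{new requirement} that $D_\varrho(\alpha,p)$ be closed in $\alpha$ for all $\alpha$ and $p$. Once $\varrho$ is produced, Corollary~\ref{cor:specialladder} gives that $(\kappa^{+(n+1)}\times B,\trianglelefteq_\varrho)$ is $\MJ{(n+3)}$-free with principal ideals of cardinality $<\kappa$, and Proposition~\ref{prop:specialladder}\ref{prop:specialladder-3}---whose remaining hypotheses ($B$ a lattice, $\varrho$ transitive and subadditive, $\varrho(0,\alpha)=\mathbf 0$, and closedness) are precisely our invariants---upgrades it to a lattice, hence to a special $(n+2,\kappa)$-ladder of cardinality $\kappa^{+(n+1)}$.

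The construction of $\varrho\upharpoonright(\gamma+1)$ from $\varrho\upharpoonright\gamma$ would follow the Case~2 template of Theorem~\ref{thm:mainplus2}, but now using $C_\gamma$ at \emph{all} limit $\gamma$ (dispensing with the Case~1/Case~2 split) and defining the auxiliary sequence $\vec p_\gamma=\langle p_\gamma(\nu)\mid\nu<\lambda_\gamma\rangle$ \emph{continuously} at limit stages. The technical engine here is a preliminary lemma: \emph{in a well-founded lattice with least element whose principal ideals have cardinality $<\kappa$, every subset bounded above has a join and every nonempty subset has a meet} (in both cases the relevant set of bounds is nonempty, closed under the opposite finite operation, and hence has a unique extremal element by well-foundedness). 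Granting this, at a limit $\nu<\lambda_\gamma$ I would set $p_\gamma(\nu)$ to be a liminf-type value, namely
\[
\bigvee_{\mu<\nu}\ \bigwedge_{\mu\le\mu'<\nu}\bigl(p_\gamma(\mu')\vee\varrho(\theta_\gamma(\mu'),\theta_\gamma(\nu))\bigr),
\]
whenever that join is bounded (and a value chosen as in the semiladder case otherwise). The virtue of the liminf is the single implication that drives closedness: if cofinally many $p_\gamma(\mu)$ ($\mu<\nu$) lie below some $p\in B$, then so does $p_\gamma(\nu)$, since each inner meet is then $\le p$ and $p$ bounds their join---crucially, no monotonicity of $\vec p_\gamma$ is required. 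The full-square coherence guarantees, exactly as in Claim~\ref{claim:general0.5} but now at \emph{every} limit point $\theta_\gamma(\nu)$ of $C_\gamma$, that $\vec p_{\theta_\gamma(\nu)}=\vec p_\gamma\upharpoonright\nu$, so this prescription is internally consistent; with that in hand the transitivity and subadditivity arguments of Claims~\ref{claim:general3} and~\ref{claim:general4} should go through essentially verbatim.

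The heart of the matter is closedness of $D_\varrho(\gamma,p)$. I would first reduce to the limit points of $C_\gamma$: inside a gap $(\theta_\gamma(\xi),\theta_\gamma(\xi+1)]$ one has $\varrho(\eta,\gamma)\le p$ iff $p_\gamma(\xi+1)\le p$ and $\eta\in D_\varrho(\theta_\gamma(\xi+1),p)\cup\{\theta_\gamma(\xi+1)\}$, so closedness there is inherited from the inductive closedness of $D_\varrho(\cdot,p)$ inside $B$. The only possible failure is at a limit point $\eta=\theta_\gamma(\nu)$, $\nu$ limit. If $D_\varrho(\gamma,p)$ accumulates to $\eta$, then, tracking $\mu_\gamma(\xi)\to\nu$ along an accumulating sequence $\xi\to\eta$, cofinally many $\mu<\nu$ satisfy $p_\gamma(\mu)\le p$, and the liminf definition yields $p_\gamma(\nu)\le p$, i.e.\ $\eta\in D_\varrho(\gamma,p)$. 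In the boundedness dichotomy, I expect that when the initial segment $\{p_\gamma(\mu)\mid\mu<\nu\}$ is unbounded in $B$ no such accumulation can occur---because $|D_\varrho(\gamma,p)|\le|{\downarrow}p|+\aleph_0<\kappa$ forces $\mathrm{cf}(\eta)=\mathrm{cf}(\nu)<\kappa$, constraining which limit points $D_\varrho(\gamma,p)$ can reach---so closedness holds there vacuously. (Here I would use that $B$ is $\mathrm{cf}(\kappa)$-directed, by Lemma~\ref{lemma:breadth} and Proposition~\ref{prop:directed}, to secure the bounds needed in the $\mathrm{cf}(\nu)<\mathrm{cf}(\kappa)$ stages.)

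Finally I would re-examine the cardinality bounds \ref{condition:general2} and \ref{condition:general3}, which in Theorem~\ref{thm:mainplus2} rested on the injectivity of $\vec p_\gamma$ through $|\{\mu<\lambda_\gamma\mid p_\gamma(\mu)\le p\}|\le|{\downarrow}p|$; once this inequality is recovered, Claims~\ref{claim:general5} and~\ref{claim:general6} adapt unchanged. This is the step I expect to be the \textbf{main obstacle}: the continuous (liminf) prescription needed for closedness is in direct tension with the injectivity needed for the ideal-size bound, since a liminf value at a limit stage may coincide with an earlier term. Reconciling the two---most plausibly by arranging $\vec p_\gamma$ to be \emph{strictly increasing} on each maximal bounded block (so that there the join is a genuine limit lying strictly above all previous terms, keeping $\vec p_\gamma$ injective), while invoking the vacuity of accumulation across the unbounded ``reset'' points---and checking that the closedness argument only ever appeals to a cofinal tail lying within a single such block, is where I anticipate the real work, and the precise bookkeeping of cofinalities of the limit indices $\nu$ is the delicate point to get right.
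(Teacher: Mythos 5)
Your global architecture matches the paper's almost exactly: induction on $n$, a full $\square_{\kappa^{+n}}$-sequence so that coherence of $\vec p_\gamma$ holds at \emph{every} limit point of $C_\gamma$ (the paper's Claim~\ref{claim:ladder2.5}), conditions \ref{condition:general1}--\ref{condition:general4} augmented by closedness of $D_\varrho(\alpha,p)$, and Proposition~\ref{prop:specialladder}\ref{prop:specialladder-3} to upgrade to a lattice. Your preliminary lemma is also correct (in a well-founded lattice the set of finite meets of a nonempty $S$ is downward directed, so a minimal element is a minimum and is the meet of $S$; bounded joins follow), and, via coherence, your inner meets collapse to tail-meets of $\vec p_\gamma\upharpoonright\nu$, so the implication you want for closedness of $\Delta_\gamma(p)=\{\nu\mid p_\gamma(\nu)\le p\}$ does go through; moreover your ``vacuity at unbounded stages'' has a one-line proof you missed: if cofinally many $p_\gamma(\mu)\le p$ then every tail-meet is $\le p$, so the family of tail-meets is bounded---hence at an unbounded stage no $\Delta_\gamma(p)$ can accumulate. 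Your cofinality argument for this is spurious, but the conclusion stands. (You also leave successor $\gamma$ unspecified; the paper simply sets $\varrho(\alpha,\gamma)=\varrho(\alpha,\gamma-1)$, and it disposes of $\kappa=\aleph_0$ outright via Theorem~\ref{thm:mainplus2}.)

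The genuine gap is exactly where you flag it, and your proposed repair misjudges the landscape quantitatively. In your lattice-valued liminf version, the ``reset'' (unbounded) case is \emph{not} an $n=0$ anomaly: since principal ideals of $B$ have cardinality $<\kappa$, every bounded subset of $B$ has size $<\kappa$, so any strictly increasing block of length $\kappa$ is unbounded; with $\lambda_\gamma$ as large as $\kappa^{+n}$ you reset cofinally often for every $n>0$, forced liminf values at later blocks can collide with terms of earlier blocks, and block-wise strict increase therefore does \emph{not} restore the global injectivity on which $|\Delta_\gamma(p)|\le|{\downarrow}p|$ (Claim~\ref{claim:ladder2}) rests. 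The paper dissolves the tension with a device your plan lacks: it uses the \emph{special}-ladder structure $B=\kappa^{+n}\times P$ of Definition~\ref{def:specialladder} and takes the liminf only in the ordinal coordinate, setting $p_\gamma(\nu)=\bigl(\liminf_{\mu<\nu}\mathrm{ht}(p_\gamma(\mu)),\mathbf 0_P\bigr)$ at limits and demanding $\mathrm{ht}(p_\gamma(\nu))>\mathrm{ht}(p_\gamma(\nu-1))$ at successors. For $n>0$ the regularity of $\kappa^{+n}$ then rules the reset case out entirely, $\mathrm{ht}\circ\vec p_\gamma$ is strictly increasing, and injectivity is free; closedness of $\Delta_\gamma(p)$ (Claim~\ref{claim:ladder1}) is obtained not from lattice completeness but from closedness of $\{\mathrm{ht}(q)\mid q\le p\}$---Remark~\ref{rmk:closedness}, the precise point where the inductive hypothesis that $B$ is a \emph{ladder} is consumed---together with the order-embedding property \ref{quasi-product-1}, which puts $(\liminf\mathrm{ht},\mathbf 0_P)$ below a witness $z\le p$. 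Only for $n=0$ (where $B=\kappa$ may be singular) does the paper face your repetition problem, and it fixes the cardinality bound \emph{without} injectivity via the closure trick $\Delta_\gamma(p)\subseteq\mathrm{cl}(\Delta_\gamma^-(p))$; transplanting that trick to every $n$ is what would be needed to complete your version, since in it the non-injective case is ubiquitous. So your route is plausibly salvageable, but as written it stops short at the step that constitutes the actual content of the paper's proof.
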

\begin{proof}

The proof's strategy is analogous to the one of Theorem~\ref{thm:mainplus2}. Moreover, since the case $\kappa = \aleph_0$ already follows from Theorem~\ref{thm:mainplus2}, we can suppose that $\kappa$ is uncountable.

We prove the result by induction on $n\in\omega$. The case $n=0$ is trivially true. So let us fix a special $(n+1, \kappa)$-ladder $(B, \le)$ of cardinality $\kappa^{+n}$ and a $\square_{\kappa^{+n}}$-sequence $\langle C_\alpha \mid \alpha < \kappa^{+(n+1)} \text{ and } \alpha \text{ limit}\rangle$, towards constructing a special $(n+2, \kappa)$-ladder of cardinality $\kappa^{+(n+1)}$. 

Let $P$ be the lattice such that $B$ is a quasi-product of $\kappa^{+n}$ and $P$. We want to define a map $\varrho: [\kappa^{+(n+1)}]^2\rightarrow B$ such that, for every $\alpha < \kappa^{+(n+1)}$ and every $p \in B$:

\begin{enumerate}[label=(\alph*)]
\itemsep0.3em
\item\label{condition:ladder1} $\varrho$ is \fxnote*{}{both} transitive and subadditive.
\item\label{condition:ladder2} $|D_\varrho(\alpha, p)| \le |{\downarrow}p|+ \aleph_0$.
\item\label{condition:ladder3} $D_\varrho(\alpha, p)$ is closed in $\alpha$.
\item\label{condition:ladder4} $\varrho(0, \alpha) = \mathbf{0}_B$.
\end{enumerate}

Note that conditions~\ref{condition:ladder3} and \ref{condition:ladder4} are new with respect to the conditions used in the proof of Theorem~\ref{thm:mainplus2}. With these new conditions, the same argument used at the beginning of the proof of Theorem~\ref{thm:mainplus2} yields that \fxnote*{}{$(\kappa^{+(n+1)} \times B, \trianglelefteq_\varrho)$} is a special $(n+2, \kappa)$-ladder.

We define $\varrho \upharpoonright \gamma$ by induction on $\gamma < \kappa^{+(n+1)}$, and in doing so, we make sure that conditions \ref{condition:ladder1}-\ref{condition:ladder4} are \fxnote*{}{satisfied} for $\varrho \upharpoonright \gamma$.  As in the proof of Theorem~\ref{thm:mainplus2}, when we say ``$\varrho \upharpoonright \gamma$ satisfies \ref{condition:ladder1}-\ref{condition:ladder4}'' we mean that the map $\varrho \upharpoonright \gamma$ satisfies statements \ref{condition:ladder1}-\ref{condition:ladder4} for all $\alpha < \gamma$.

Fix a well-ordering $\lessdot$ on a sufficiently large set, which \fxnote*{}{will ensure} uniformity in our choices during the inductive construction.

If $\lambda$ is a limit and we have defined $\varrho \upharpoonright \gamma$ so to satisfy \ref{condition:ladder1}-\ref{condition:ladder4} for every $\gamma < \lambda$, then clearly $\varrho \upharpoonright \lambda = \bigcup_{\gamma < \lambda} \varrho \upharpoonright \gamma$ still satisfies \ref{condition:ladder1}-\ref{condition:ladder4}. Thus, let us take care of the successor case. 

Suppose that we have defined $\varrho$ on $[\gamma]^2$, towards extending it to $[\gamma+1]^2$. \fxnote*{}{Recall our convention $\varrho(\alpha, \alpha) = \mathbf{0}_B$}. There are two cases: 
\begin{description}
\itemsep0.3em
\item[Case 1] $\gamma$ is a successor ordinal:  For each $\alpha < \gamma$, set $\varrho(\alpha, \gamma) = \varrho(\alpha, \gamma-1)$.

\item[Case 2] $\gamma$ is a limit ordinal: Let $\vec{\theta}_\gamma=\langle \theta_\gamma(\nu) \mid \nu < \lambda_\gamma\rangle$ be the increasing enumeration of $C_\gamma$. We inductively define a sequence $\vec{p}_\gamma= \langle p_\gamma (\nu) \mid \nu < \lambda_\gamma \rangle$ of elements of $B$ as follows:

\begin{enumerate}[label={\upshape (\roman*)}]
\itemsep0.3em
\item Let $p_\gamma(0) = \mathbf{0}_B = (0, \mathbf{0}_P)$.

\item If $\nu$ is a successor ordinal, let $p_\gamma(\nu)$ be the $\lessdot$-least $p \in B$ such that \fxnote*{}{$p_\gamma(\nu-1) \vee \varrho(\theta_\gamma(\nu-1), \theta_\gamma(\nu)) \le p$} and $\mathrm{ht}(p) > \mathrm{ht}(p_\gamma(\nu-1))$ and $p \neq p_\gamma(\mu)$ for all $\mu < \nu$. Note that such a $p$ always exists: first pick $p'$ such that \fxnote*{}{$p_\gamma(\nu-1) \vee \varrho(\theta_\gamma(\nu-1), \theta_\gamma(\nu)) \le p'$}; then, as $B$ is a quasi-product of $\kappa^{+n}$ and $P$, there exists $p'' > p'$ with $\mathrm{ht}(p'') > \mathrm{ht}(p')$, and, \textit{a fortiori}, $\mathrm{ht}(p'') > \mathrm{ht}(p_\gamma(\nu-1))$; finally, since $|{\uparrow}p''| = \kappa^{+n}$ and $|\nu| < \kappa^{+n}$, we conclude that there exists $p \ge p''$ such that $p \neq p_\gamma(\mu)$ for all $\mu < \nu$.

\item  If $\nu$ is a limit ordinal and\footnote{Given a sequence $(x_\mu)_{\mu < \nu}$, by $\liminf_{\mu < \nu} x_\mu$ we mean $\sup_{\mu < \nu} \inf_{\mu \le \xi < \nu} x_\xi$.} $\liminf_{\mu < \nu} \mathrm{ht}(p_\gamma(\mu)) < \kappa^{+n}$, then let
\[
p_\gamma(\nu) = \big(\liminf_{\mu < \nu} \mathrm{ht}(p_\gamma(\mu)), \mathbf{0}_P\big).
\]

\item\label{vectordef4} If $\nu$ is a limit ordinal and $\liminf_{\mu < \nu} \mathrm{ht}(p_\gamma(\mu)) = \kappa^{+n}$ (we will argue that this can happen only when $n=0$), then let $p_\gamma(\nu)$ be the $\lessdot$-least $p \in B$ such that $p \neq p_\gamma(\mu)$ for all $\mu < \nu$. \vspace{0.3em}
\end{enumerate}
Now that we have defined the sequence $\vec{p}_\gamma$, we can extend $\varrho$: for each $\alpha < \gamma$, let $\mu_\gamma(\alpha)$ be the least $\nu < \lambda_\gamma$ such that $\alpha \le \theta_\gamma(\nu)$, and set $\varrho(\alpha, \gamma) = p_\gamma(\mu_\gamma(\alpha)) \vee \varrho(\alpha, \theta_\gamma(\mu_\gamma(\alpha)))$.\vspace{0.3em}
\end{description}

We assume that $\varrho \upharpoonright \gamma$ satisfies \ref{condition:ladder1}-\ref{condition:ladder4} \fxnote*{}{and has been defined following the inductive construction described above}. The rest of the proof consists of showing that $\varrho \upharpoonright (\gamma+1)$ also satisfies \ref{condition:ladder1}-\ref{condition:ladder4}. If $\gamma$ is a limit, then for every $p \in B$ we let 
\[
\Delta_\gamma(p) \coloneqq \big\{\nu < \lambda_\gamma \mid p_\gamma(\nu) \le p\big\}.
\]

\begin{claim}
$\varrho(0, \gamma) = \mathbf{0}_B$.
\end{claim}
\begin{proof}
If $\gamma$ is a successor ordinal, then, by definition of $\varrho \upharpoonright (\gamma+1)$, $\varrho(0, \gamma) = \varrho(0, \gamma-1)$. Since we are assuming $\varrho(0, \gamma-1) = \mathbf{0}_B$, the claim follows. 

Now suppose that $\gamma$ is a limit. Clearly, $\mu_\gamma(0) = 0$; therefore, by definition of $\varrho \upharpoonright (\gamma+1)$, $\varrho(0, \gamma) = p_\gamma(0) \vee \varrho(0, \theta_\gamma(0))$. By construction, $p_\gamma(0) = \mathbf{0}_B$ and, by inductive assumption, $\varrho(0, \theta_\gamma(0)) = \mathbf{0}_B$. Hence, the claim follows.
\end{proof}

\begin{claim}\label{claim:ladder1}
Suppose that $\gamma$ is a limit. Then, for each $p \in B$, the set $\Delta_\gamma(p)$ is closed in $\lambda_\gamma$.
\end{claim} 
\begin{proof}

Pick any limit point $\nu$ of $\Delta_\gamma(p)$, towards showing that $\nu \in \Delta_\gamma(p)$. There must be cofinally many $\mu < \nu$ such that $p_\gamma(\mu) \le p$. In particular, 
\begin{equation}\label{eq:claim:ladder1-1}
\liminf_{\mu < \nu} \mathrm{ht}(p_\gamma(\mu)) \le \mathrm{ht}(p) < \kappa^{+n},
\end{equation}
and hence, by definition of $\vec{p}_\gamma$,
\begin{equation}\label{eq:claim:ladder1-2}
p_\gamma(\nu) = \big(\liminf_{\mu < \nu} \mathrm{ht}(p_\gamma(\mu)), \mathbf{0}_P\big).
\end{equation}

If $n = 0$, then $B$ can be identified with $\kappa$ (see the remark after Definition~\ref{def:specialladder}). In particular, the inequality \eqref{eq:claim:ladder1-1} becomes
\[
\liminf_{\mu < \nu} p_\gamma(\mu) \le p < \kappa,
\]
and \eqref{eq:claim:ladder1-2} becomes
\[
p_\gamma(\nu) = \liminf_{\mu < \nu} p_\gamma(\mu).
\]
Therefore, $p_\gamma(\nu) \le p$, or, equivalently, $\nu \in \Delta_\gamma(p)$.

Now suppose $n > 0$. Since $\kappa^{+n}$ is regular, \fxnote*{}{$\vec{p}_\gamma \upharpoonright \nu'$} does not satisfy the hypothesis of case~\ref{vectordef4} of the definition of $\vec{p}_\gamma$ for any \fxnote*{}{$\nu' < \lambda_\gamma$}. In other words, \fxnote*{}{$\liminf_{\mu < \nu'} \mathrm{ht}(p_\gamma(\mu)) < \kappa^{+n}$ for all $\nu' < \lambda_\gamma$}. It  follows from the definition of $\vec{p}_\gamma$ that the sequence $\mathrm{ht}\circ \vec{p}_\gamma = \langle \mathrm{ht}(p_\gamma(\mu)) \mid \mu < \lambda_\gamma\rangle$ is \fxnote*{}{strictly} increasing. In particular,
\begin{equation}\label{eq:claim:ladder1-3}
\begin{split}
\liminf_{\mu < \nu} \mathrm{ht}(p_\gamma(\mu)) &= \sup_{\mu < \nu} \mathrm{ht}(p_\gamma(\mu))\\
&= \sup\big\{\mathrm{ht}(p_\gamma(\mu)) \mid \mu < \nu \text{ and } p_\gamma(\mu) \le p\big\},
\end{split}
\end{equation}
where the first equality follows from the monotonicity of $\mathrm{ht} \circ \vec{p}_\gamma$ and the second one from the already noted fact that for cofinally many $\mu < \nu$, $p_\gamma(\mu) \le p$. Since $B$ is a lattice, then $\{\mathrm{ht}(q) \mid q \le p\}$ is a closed subset of $\kappa^{+n}$ \fxnote*{}{(see Remark~\ref{rmk:closedness})}. By this observation, by \eqref{eq:claim:ladder1-3} and \eqref{eq:claim:ladder1-2}, we conclude
\[
\mathrm{ht}(p_\gamma(\nu)) \in \{\mathrm{ht}(q) \mid q \le p\}.
\]
Equivalently,  there exists $z \in B$ such that $\mathrm{ht}(z) = \mathrm{ht}(p_\gamma(\nu))$ and $z \le p$. Hence, $p_\gamma(\nu) = (\mathrm{ht}(z), \mathbf{0}_P)$. As $(B, \le) = (\kappa^{+n}\times P, \le)$ is a quasi-product of $\kappa^{+n}$ and $P$, we conclude that $p_\gamma(\nu) \le z$, and therefore $p_\gamma(\nu) \le p$, or, equivalently, $ \nu \in \Delta_\gamma(p)$.
\end{proof}

\begin{claim}\label{claim:ladder2}
Suppose that $\gamma$ is a limit. Then, for each $p \in B$, $|\Delta_\gamma(p)| \le |{\downarrow}p|$.
\end{claim}
\begin{proof}
Fix some $p \in B$, towards showing that $|\Delta_\gamma(p)| \le |{\downarrow}p|$.  If $n > 0$, we have already noted in the proof of Claim~\ref{claim:ladder1} that $\mathrm{ht} \circ \vec{p}_\gamma$ is strictly increasing. In particular, $\vec{p}_\gamma$ is injective, and it directly follows that $|\Delta_\gamma(p)| \le |{\downarrow}p|$. 

We are left to deal with $n = 0$. Recall that in this case we identify $B$ with $\kappa$. Let
\[
\Delta_\gamma^-(p) \coloneqq \big\{\nu < \lambda_\gamma \mid p_\gamma(\nu) \le p \text{ and } (\nu \text{ successor or } \liminf_{\mu < \nu}p_\gamma(\mu) = \kappa)\big\}.
\]
We claim that $\Delta_\gamma(p) \subseteq \mathrm{cl}(\Delta_\gamma^-(p))$, where $\mathrm{cl}(\Delta_\gamma^-(p))$ is the closure of the set $\Delta_\gamma^-(p)$. This suffices to prove that $|\Delta_\gamma(p)|\le |{\downarrow}p|$, as $\vec{p}_\gamma \upharpoonright \Delta_\gamma^-(p)$ is injective by definition of $\vec{p}_\gamma$, and hence, if our claim is true, we have $|{\downarrow}p| \ge |\Delta_\gamma^-(p)| = |\mathrm{cl}(\Delta_\gamma^-(p))| \ge |\Delta_\gamma(p)|$.

We prove that $\Delta_\gamma(p) \cap \nu \subseteq \mathrm{cl}(\Delta_\gamma^-(p)) \cap \nu$ for all $\nu < \lambda_\gamma$ by induction on $\nu$.  As the limit case is trivial, we can fix $\nu \in \Delta_\gamma(p)$ and suppose that $\Delta_\gamma(p) \cap \nu \subseteq \mathrm{cl}(\Delta_\gamma^-(p)) \cap \nu$, towards showing that $\nu \in \mathrm{cl}(\Delta_\gamma^-(p))$. If $\nu$ is a successor ordinal or $\liminf_{\mu < \nu} p_\gamma(\mu) = \kappa$, then $\nu \in \Delta_\gamma^-(p)$ by definition; otherwise, $p_\gamma(\nu) = \liminf_{\mu < \nu}p_\gamma(\mu)$, and thus $\liminf_{\mu < \nu}p_\gamma(\mu) \le p$ since we are assuming $p_\gamma(\nu) \le p$. But note that
\begin{equation}
\liminf_{\mu < \nu} p_\gamma(\mu) = \adjustlimits \sup_{\mu < \nu} \min_{\mu \le \xi < \nu} p_\gamma(\xi),
\end{equation}
as $B = \kappa$ is well-ordered. Hence, there must be cofinally many $\mu < \nu$ such that $p_\gamma(\mu) \le p$. In other words, $\nu$ is a limit point of $\Delta_\gamma(p)$. Since we assumed $\Delta_\gamma(p) \cap \nu \subseteq \mathrm{cl}(\Delta_\gamma^-(p)) \cap \nu$, we conclude that $\nu$ belongs to $\mathrm{cl}(\Delta_\gamma^-(p))$, being a limit point of it. 
\end{proof}

Claims~\ref{claim:ladder2.5}--\ref{claim:ladder7} are just minor variations of Claims~\ref{claim:general0.5}--\ref{claim:general5}, respectively. 

\fxnote*{}{
\begin{claim}\label{claim:ladder2.5}
Suppose that $\gamma$ is a limit. Then, for every limit $\nu < \lambda_\gamma$, $\vec{\theta}_{\theta_\gamma(\nu)} = \vec{\theta}_\gamma \upharpoonright \nu$ and $\vec{p}_{\theta_\gamma(\nu)} = \vec{p}_\gamma \upharpoonright \nu$.
\end{claim}
\begin{proof}
The proof is the same, \textit{mutatis mutandis}, as that of Claim~\ref{claim:general0.5}.
\end{proof}
}

\begin{claim}\label{claim:ladder3}
Suppose that $\gamma$ is a limit.  Then $\varrho(\theta_\gamma(\mu), \theta_\gamma(\nu)) \le p_\gamma(\mu) \vee p_\gamma(\nu)$ for every  $\mu < \nu < \lambda_\gamma$.
\end{claim} 
\begin{proof}
We prove our claim by induction on $\nu$. It vacuously holds when $\nu = 0$. 

Suppose that $\nu$ is a successor ordinal. By transitivity of $\varrho \upharpoonright \gamma$, \[\varrho(\theta_\gamma(\mu), \theta_\gamma(\nu)) \le \varrho(\theta_\gamma(\mu), \theta_\gamma(\nu-1)) \vee \varrho(\theta_\gamma(\nu-1), \theta_\gamma(\nu)).\]  By induction hypothesis, $\varrho(\theta_\gamma(\mu), \theta_\gamma(\nu-1)) \le p_\gamma(\mu) \vee p_\gamma(\nu-1)$.  Moreover, $p_\gamma(\nu-\nobreak 1) \vee \varrho(\theta_\gamma(\nu-1), \theta_\gamma(\nu)) \le p_\gamma(\nu)$ by definition of $p_\gamma(\nu)$. Combining these observations, we get $\varrho(\theta_\gamma(\mu), \theta_\gamma(\nu)) \le p_\gamma(\mu) \vee p_\gamma(\nu)$.

Now suppose that $\nu$ is a limit. The following holds:
\begin{equation}\label{eq:claim:ladder3}
\varrho(\theta_\gamma(\mu), \theta_\gamma(\nu)) = \varrho(\theta_{\theta_\gamma(\nu)}(\mu), \theta_\gamma(\nu)) = p_{\theta_\gamma(\nu)}(\mu) = p_\gamma(\mu),
\end{equation}
where the first and last equalities follow from \fxnote*{}{Claim~\ref{claim:ladder2.5}}, and the middle one comes directly from the definition of $\varrho \upharpoonright (\theta_\gamma(\nu)+1)$.
It follows from \eqref{eq:claim:ladder3} that $\varrho(\theta_\gamma(\mu), \theta_\gamma(\nu)) = p_\gamma(\mu) \le p_\gamma(\mu) \vee p_\gamma(\nu)$.
\end{proof}

\begin{claim}\label{claim:ladder4}
Suppose that $\gamma$ is a limit. For every $\alpha < \gamma$ and every $\nu$ with $\mu_\gamma(\alpha) \le \nu < \lambda_\gamma$, we have $p_\gamma(\mu_\gamma(\alpha)) \le p_\gamma(\nu) \vee \varrho(\alpha, \theta_\gamma(\nu))$.
\end{claim}
\begin{proof}
We prove the claim by induction on $\nu$. Clearly, the claim holds when $\nu = \mu_\gamma(\alpha)$. 

Suppose that $\mu_\gamma(\alpha) < \nu$ and that $\nu$ is a successor ordinal. By induction hypothesis, $p_\gamma(\mu_\gamma(\alpha)) \le p_\gamma(\nu-1) \vee \varrho(\alpha, \theta_\gamma(\nu-1))$. By subadditivity of $\varrho \upharpoonright \gamma$, we have $\varrho(\alpha, \theta_\gamma(\nu-1)) \le \varrho(\alpha, \theta_\gamma(\nu)) \vee \varrho(\theta_\gamma(\nu-1), \theta_\gamma(\nu))$. But since, by definition, $p_\gamma(\nu-1) \vee \varrho(\theta_\gamma(\nu-1), \theta_\gamma(\nu)) \le p_\gamma(\nu)$, we conclude that $p_\gamma(\mu_\gamma(\alpha)) \le p_\gamma(\nu) \vee \varrho(\alpha, \theta_\gamma(\nu))$.

Finally, suppose that $\mu_\gamma(\alpha) < \nu$ and that $\nu$ is a limit.  \fxnote*{}{The following holds:
\begin{align*}
\varrho(\alpha, \theta_\gamma(\nu)) &= p_{\theta_\gamma(\nu)}(\mu_{\theta_\gamma(\nu)}(\alpha)) \vee \varrho(\alpha, \theta_{\theta_\gamma(\nu)}(\mu_{\theta_\gamma(\nu)}(\alpha)))\\
&= p_\gamma(\mu_\gamma(\alpha)) \vee \varrho(\alpha, \theta_\gamma(\mu_\gamma(\alpha))),
\end{align*}
where the first equality follows directly from the definition of $\varrho \upharpoonright \nobreak (\theta_\gamma(\nu)+\nobreak 1)$, and the second one is a direct consequence of Claim~\ref{claim:ladder2.5}---by looking at the definition of $\mu_\gamma(\alpha)$ at the end of Case 2, note that $\mu_{\theta_\gamma(\nu)}(\alpha) = \mu_\gamma(\alpha)$ directly follows from $\vec{\theta}_{\theta_\gamma(\nu)} = \vec{\theta}_\gamma \upharpoonright \nu$.}  
Thus $p_\gamma(\mu_\gamma(\alpha)) \le \varrho(\alpha, \theta_\gamma(\nu))$ and, \textit{a fortiori}, $p_\gamma(\mu_\gamma(\alpha)) \le p_\gamma(\nu) \vee \nobreak \varrho(\alpha, \theta_\gamma(\nu))$.
\end{proof}

\begin{claim}\label{claim:ladder5}
$\varrho \upharpoonright (\gamma+1)$ is transitive.
\end{claim}
\begin{proof}
Since we are assuming that $\varrho \upharpoonright \gamma$ is transitive, it suffices to show that $\varrho(\alpha, \gamma) \le \varrho(\alpha, \beta) \vee \varrho(\beta, \gamma)$ for every $\alpha, \beta$ with $\alpha < \beta < \gamma$. Hence, fix $\alpha,\beta$ with $\alpha < \beta < \gamma$.

First suppose that $\gamma$ is a successor ordinal. By the transitivity of $\varrho \upharpoonright \gamma$, $\varrho(\alpha, \gamma-\nobreak 1) \le \varrho(\alpha, \beta) \vee \varrho(\beta, \gamma-1)$, and thus 
\[
\varrho(\alpha, \gamma) = \varrho(\alpha, \gamma-1) \le \varrho(\alpha, \beta) \vee \varrho(\beta, \gamma-1) = \varrho(\alpha, \beta) \vee \varrho(\beta, \gamma),
\]
where the equalities hold by definition of $\varrho \upharpoonright (\gamma +1)$.

Now suppose that $\gamma$ is a limit and $\mu_\gamma(\alpha) = \mu_\gamma(\beta)$. By the transitivity of $\varrho \upharpoonright \gamma$ we have
\[
\varrho(\alpha, \theta_\gamma(\mu_\gamma(\alpha))) \le \varrho(\alpha, \beta) \vee \varrho(\beta, \theta_\gamma(\mu_\gamma(\alpha)))
\]
and thus
\begin{align*}
\varrho(\alpha, \gamma) &= p_\gamma(\mu_\gamma(\alpha)) \vee \varrho(\alpha, \theta_\gamma(\mu_\gamma(\alpha)))\\
&\le p_\gamma(\mu_\gamma(\alpha)) \vee \varrho(\alpha, \beta) \vee \varrho(\beta, \theta_\gamma(\mu_\gamma(\alpha)))\\
&= \varrho(\alpha, \beta) \vee p_\gamma(\mu_\gamma(\beta)) \vee \varrho(\beta, \theta_\gamma(\mu_\gamma(\beta)))\\
&= \varrho(\alpha, \beta) \vee \varrho(\beta, \gamma),
\end{align*}
where the second equality comes from the hypothesis $\mu_\gamma(\alpha) = \mu_\gamma(\beta)$ and the other two equalities hold by definition of $\varrho \upharpoonright (\gamma+1)$.

Finally, suppose that $\gamma$ is a limit and $\mu_\gamma(\alpha) < \mu_\gamma(\beta)$. Note that $\theta_\gamma(\mu_\gamma(\alpha))$ must be smaller than $\beta$ by the minimality of $\mu_\gamma(\beta)$. The following holds:
\begin{equation}\label{eq:ladder5-1}
\begin{split}
\varrho(\alpha, \gamma) &= p_\gamma(\mu_\gamma(\alpha)) \vee \varrho(\alpha, \theta_\gamma(\mu_\gamma(\alpha)))\\
&\le p_\gamma(\mu_\gamma(\alpha)) \vee \varrho(\alpha, \beta) \vee \varrho(\theta_\gamma(\mu_\gamma(\alpha)), \beta)\\
&\le  p_\gamma(\mu_\gamma(\alpha)) \vee \varrho(\alpha, \beta) \vee \varrho(\theta_\gamma(\mu_\gamma(\alpha)), \theta_\gamma(\mu_\gamma(\beta))) \vee \varrho(\beta, \theta_\gamma(\mu_\gamma(\beta)))\\
&\le p_\gamma(\mu_\gamma(\alpha)) \vee \varrho(\alpha, \beta) \vee p_\gamma(\mu_\gamma(\beta)) \vee \varrho(\beta, \theta_\gamma(\mu_\gamma(\beta)))\\
&= p_\gamma(\mu_\gamma(\alpha)) \vee \varrho(\alpha, \beta) \vee \varrho(\beta,\gamma),
\end{split}
\end{equation}
where the first two inequalities hold because of the subadditivity of $\varrho \upharpoonright \gamma$, while the last inequality holds by Claim~\ref{claim:ladder3}. To finish the argument, note the following:
\begin{equation}\label{eq:ladder5-2}
\begin{split}
p_\gamma(\mu_\gamma(\alpha)) &\le p_\gamma(\mu_\gamma(\beta)) \vee \varrho(\alpha, \theta_\gamma(\mu_\gamma(\beta)))\\
&\le p_\gamma(\mu_\gamma(\beta)) \vee \varrho(\alpha, \beta) \vee \varrho(\beta, \theta_\gamma(\mu_\gamma(\beta)))\\
&= \varrho(\alpha, \beta) \vee \varrho(\beta,\gamma),
\end{split}
\end{equation}
where the first inequality holds by Claim~\ref{claim:ladder4} and the second inequality follows from the transitivity of $\varrho \upharpoonright \gamma$. By combining \eqref{eq:ladder5-1} and \eqref{eq:ladder5-2} we get $\varrho(\alpha, \gamma) \le \varrho(\alpha, \beta) \vee \varrho(\beta, \gamma)$.
\end{proof}

\begin{claim}\label{claim:ladder6}
$\varrho \upharpoonright (\gamma+1)$ is subadditive.
\end{claim}
\begin{proof}
Since we are assuming that $\varrho \upharpoonright \gamma$ is subadditive, it suffices to show that $\varrho(\alpha, \beta) \le \varrho(\alpha, \gamma) \vee \varrho(\beta, \gamma)$ for every $\alpha, \beta$ with $\alpha < \beta < \gamma$. Hence fix some $\alpha, \beta$ with $\alpha < \beta < \gamma$.

First suppose that $\gamma$ is a successor ordinal. By the subadditivity of $\varrho \upharpoonright \gamma$, $\varrho(\alpha,\beta) \le \varrho(\alpha, \gamma-1) \vee \varrho(\beta, \gamma-1)$. It immediately follows by definition of $\varrho \upharpoonright (\gamma+1)$ that $\varrho(\alpha,\beta)  \le \varrho(\alpha, \gamma) \vee \varrho(\beta, \gamma)$.

Now suppose that $\gamma$ is a limit and $\mu_\gamma(\alpha) = \mu_\gamma(\beta)$. By assumption and the subadditivity of $\varrho \upharpoonright \gamma$, 
\begin{align*}
\varrho(\alpha, \beta) &\le \varrho(\alpha, \theta_\gamma(\mu_\gamma(\alpha))) \vee \varrho(\beta, \theta_\gamma(\mu_\gamma(\alpha)))\\
&=\varrho(\alpha, \theta_\gamma(\mu_\gamma(\alpha))) \vee \varrho(\beta, \theta_\gamma(\mu_\gamma(\beta)))
\end{align*}
and \textit{a fortiori} 
\begin{align*}
\varrho(\alpha, \beta) &\le \varrho(\alpha, \theta_\gamma(\mu_\gamma(\alpha))) \vee \varrho(\beta, \theta_\gamma(\mu_\gamma(\beta))) \vee p_\gamma(\mu_\gamma(\alpha))\\
&= \varrho(\alpha, \gamma) \vee \varrho(\beta, \gamma).
\end{align*}

Finally, suppose $\gamma$ is a limit and $\mu_\gamma(\alpha) < \mu_\gamma(\beta)$. Note that $\theta_\gamma(\mu_\gamma(\alpha))$ must be smaller than $\beta$ by the minimality of $\mu_\gamma(\beta)$. The following holds:
\begin{align*}
\varrho(\alpha, \beta) &\le \varrho(\alpha, \theta_\gamma(\mu_\gamma(\alpha))) \vee \varrho(\theta_\gamma(\mu_\gamma(\alpha)), \beta)\\
&\le \varrho(\alpha, \theta_\gamma(\mu_\gamma(\alpha))) \vee \varrho(\theta_\gamma(\mu_\gamma(\alpha)), \theta_\gamma(\mu_\gamma(\beta))) \vee \varrho(\beta, \theta_\gamma(\mu_\gamma(\beta)))\\
&\le \varrho(\alpha, \theta_\gamma(\mu_\gamma(\alpha))) \vee p_\gamma(\mu_\gamma(\alpha)) \vee p_\gamma(\mu_\gamma(\beta)) \vee \varrho(\beta, \theta_\gamma(\mu_\gamma(\beta)))\\
&= \varrho(\alpha, \gamma) \vee \varrho(\beta, \gamma),
\end{align*}
where the first inequality follows from the transitivity of $\varrho \upharpoonright \gamma$, the second one follows from the subadditivity of $\varrho \upharpoonright \gamma$, while the last holds by Claim~\ref{claim:ladder3}.
\end{proof}

\begin{claim}\label{claim:ladder7}
$|D_\varrho(\gamma, p)| \le |{\downarrow}p| + \aleph_0$ for every $p \in B$.
\end{claim}
\begin{proof}
Let us assume first that $\gamma$ is a successor ordinal. By definition of $\varrho \upharpoonright (\gamma+1)$, $D_\varrho(\gamma, p) = D_\varrho(\gamma-1, p) \cup \{\gamma-1\}$, and therefore the claim follows by induction hypothesis.

Now assume $\gamma$ is a limit. \fxnote*{}{The following is a direct consequence of the definition of $\varrho \upharpoonright (\gamma+1)$:
\begin{equation}\label{eq:claim:ladder7}
D_\varrho(\gamma, p) \subseteq \bigcup \big\{D_\varrho(\theta_\gamma(\mu), p) \cup \{\theta_\gamma(\mu)\} \mid \mu < \lambda_\gamma \text{ and } p_\gamma(\mu) \le p\big\}.
\end{equation}} 
By induction hypothesis, $\varrho \upharpoonright \gamma$ satisfies \ref{condition:ladder2}, and therefore $|D_\varrho(\theta_\gamma(\mu), p)| \le |{\downarrow}p|+\nobreak \aleph_0$ for every $\mu < \lambda_\gamma$. This last observation, together with \eqref{eq:claim:ladder7}, implies that 
\[
|D_\varrho(\gamma, p)| \le |\Delta_\gamma(p)| \cdot |{\downarrow}p| + \aleph_0.
\]
By Claim~\ref{claim:ladder2}, $|\Delta_\gamma(p)| \le |{\downarrow}p|$. Thus, we conclude that $|D_\varrho(\gamma, p)| \le |{\downarrow}p| + \aleph_0$.
\end{proof}

\begin{claim}\label{claim:ladder8}
$D_\varrho(\gamma, p)$ is closed in $\gamma$ for every $p \in B$.
\end{claim}
\begin{proof}
If $\gamma$ is a successor ordinal, then, by definition of $\varrho \upharpoonright (\gamma+1)$, $D_\varrho(\gamma, p) = D_\varrho(\gamma-1, p) \cup \{\gamma-1\}$. By induction hypothesis, $D_\varrho(\gamma-1, p)$ is closed in $\gamma-1$, and therefore $D_\varrho(\gamma, p)$ is closed in $\gamma$.

Now suppose that $\gamma$ is a limit. We first claim that (cf. \eqref{eq:claim:ladder7})
\fxnote*{}{
\begin{equation}\label{eq:claim:ladder7.5}
D_\varrho(\gamma, p) = \bigcup \big\{D_\varrho(\theta_\gamma(\mu), p) \cup \{\theta_\gamma(\mu)\} \mid \mu < \lambda_\gamma \text{ and } p_\gamma(\mu) \le p\big\}.
\end{equation}
One inclusion comes from \eqref{eq:claim:ladder7}. Now we focus on the other inclusion. Since, by definition, $\varrho(\theta_\gamma(\mu), \gamma) = p_\gamma(\mu)$ for every $\mu < \lambda_\gamma$, we automatically have that $\theta_\gamma(\mu) \in D_\varrho(\gamma, p)$ for every $\mu\in \Delta_\gamma(p)$. Now pick some $\mu \in \Delta_\gamma(p)$ and an $\alpha$ such that $\alpha \in D_\varrho(\theta_\gamma(\mu), p)$, towards showing that $\alpha \in D_\varrho(\gamma, p)$. By transitivity of $\varrho \upharpoonright (\gamma+1)$ (proved in Claim~\ref{claim:ladder5}), we have that
\[
\varrho(\alpha, \gamma) \le \varrho(\alpha, \theta_\gamma(\mu)) \vee \varrho(\theta_\gamma(\mu), \gamma) = \varrho(\alpha, \theta_\gamma(\mu)) \vee p_\gamma(\mu).
\]
Since we picked $\mu$ such that $p_\gamma(\mu) \le p$ and, by assumption, $\varrho(\alpha, \theta_\gamma(\mu)) \le p$, we conclude that $\varrho(\alpha, \gamma) \le p$, or, equivalently, $\alpha \in D_\varrho(\gamma, p)$. Thus, \eqref{eq:claim:ladder7.5} holds. 
}

We now claim that
\begin{equation}\label{eq:claim:ladder8}
\forall \mu \in \Delta_\gamma(p), \ D_\varrho(\gamma, p) \cap \theta_\gamma(\mu) = D_\varrho(\theta_\gamma(\mu), p).
\end{equation}
Towards showing that \eqref{eq:claim:ladder8} holds, fix some $\mu \in \Delta_\gamma(p)$. By \eqref{eq:claim:ladder7.5}, $D_\varrho(\theta_\gamma(\mu), p) \subseteq D_\varrho(\gamma, p)$. Now, given some $\alpha \in D_\varrho(\gamma, p) \cap \theta_\gamma(\mu)$, let us show that $\alpha$ belongs to $D_\varrho(\theta_\gamma(\mu), p)$. By \eqref{eq:claim:ladder7.5} again, there exists $\nu \in \Delta_\gamma(p)$ with $\alpha \in D_\varrho(\theta_\gamma(\nu), p) \cup \nobreak \{\theta_\gamma(\nu)\}$. If $\mu \le \nu$, then, by subadditivity of $\varrho\upharpoonright \gamma$,
\[
\varrho(\alpha, \theta_\gamma(\mu)) \le \varrho(\alpha, \theta_\gamma(\nu)) \vee \varrho(\theta_\gamma(\mu), \theta_\gamma(\nu));
\]
if $\nu < \mu$ instead, then, by transitivity of $\varrho\upharpoonright \gamma$,
\[
\varrho(\alpha, \theta_\gamma(\mu)) \le \varrho(\alpha, \theta_\gamma(\nu)) \vee \varrho(\theta_\gamma(\nu), \theta_\gamma(\mu)).
\]
In either case, we may conclude, by Claim~\ref{claim:ladder3}, that
\[
\varrho(\alpha, \theta_\gamma(\mu)) \le \varrho(\alpha, \theta_\gamma(\nu)) \vee p_\gamma(\mu) \vee p_\gamma(\nu).
\]
Therefore $\varrho(\alpha, \theta_\gamma(\mu)) \le p$, since both $\mu$ and $\nu$ belong to $\Delta_\gamma(p)$ and since we are assuming $\varrho(\alpha, \theta_\gamma(\nu)) \le p$. Thus, $\alpha \in D_\varrho(\theta_\gamma(\mu), p)$ as we wanted to show.

We are ready to prove that $D_\varrho(\gamma, p)$ is closed in $\gamma$. Pick $\alpha < \gamma$ such that $\alpha$ is a limit point of $D_\varrho(\gamma, p)$, towards showing that $\alpha \in D_\varrho(\gamma, p)$. There are two cases: either $\alpha$ is a limit point of $\{\theta_\gamma(\mu) \mid \mu \in \Delta_\gamma(p)\}$ or it is not. In the first case, by the \fxnote*{}{closedness} of $\Delta_\gamma(p)$ (Claim~\ref{claim:ladder1}) and the continuity of $\vec{\theta}_\gamma$, there exists $\mu \in \Delta_\gamma(p)$ such that $\alpha = \theta_\gamma(\mu)$; but also in the second case there must be some $\mu \in \Delta_\gamma(p)$ such that $\alpha \le \theta_\gamma(\mu)$, as otherwise, by \eqref{eq:claim:ladder7.5}, $D_\varrho(\gamma, p)$ would be bounded below $\alpha$, contradicting $\alpha$ being a limit point of $D_\varrho(\gamma, p)$. Therefore, in either case, there exists $\mu \in \Delta_\gamma(p)$ such that $\alpha \le \theta_\gamma(\mu)$. Fix one such $\mu$. If $\alpha = \theta_\gamma(\mu)$, then $\alpha \in D_\varrho(\gamma, p)$ by \eqref{eq:claim:ladder7.5}. If $\alpha < \theta_\gamma(\mu)$, then, by \eqref{eq:claim:ladder8}, $\alpha$ is a limit point of $D_\varrho(\theta_\gamma(\mu), p)$. Furthermore, since $D_\varrho(\theta_\gamma(\mu), p)$ is closed in $\theta_\gamma(\mu)$ by induction hypothesis, $\alpha$ belongs to $D_\varrho(\theta_\gamma(\mu), p)$ and, by \eqref{eq:claim:ladder7.5} again, we conclude that also in this case $\alpha \in D_\varrho(\gamma, p)$. Overall, $\alpha \in D_\varrho(\gamma, p)$, and thus $D_\varrho(\gamma, p)$ is a closed subset of $\gamma$.
\end{proof}
\end{proof}

\section{Open questions}\label{sec:conclusions}

\fxnote*{}{Many questions remain open. We list here a selection of them. We start with the main question stemming from Ditor's work, which remains open} \cite{ MR2768581, MR2926318}:

\begin{question}\label{question:1}
Is the existence of a $3$-ladder of cardinality $\aleph_2$ provable in $\mathsf{ZFC}$?
\end{question}

The next natural questions, in light of our results, are:

\begin{question}\label{question:2}
Is the existence of a (special) $(2, \aleph_1)$-ladder of cardinality $\aleph_2$ provable in $\mathsf{ZFC}$?
\end{question}

\begin{question}\label{question:4}
Does the existence of a $(2, \aleph_1)$-ladder of cardinality $\aleph_2$ imply the existence of a $3$-ladder of cardinality $\aleph_2$ and vice versa?
\end{question}

\fxnote*{}{It remains open whether Ditor's Problem~\ref{ditprob2} has a positive answer in $\mathsf{ZFC}$:

\begin{question}
Is the existence of a join-semilattice of breadth $2$ and cardinality $\aleph_{\omega+1}$ whose principal ideals have cardinality $< \aleph_\omega$ provable in $\mathsf{ZFC}$?
\end{question}}

Regarding Question~\ref{question:1}, it is somewhat unexpected for us that the existence of a 3-ladder of cardinality $\aleph_2$ can be derived from either $\square_{\aleph_1}$ (Theorem~\ref{thm:main3}) or from $\mathsf{MA}(\aleph_1)$ \cite{MR2609217}. It means that the existence of this structure follows from two axioms \fxnote*{}{that belong to two classes of axioms} usually considered, in some sense, ``orthogonal'', as already noted by Wehrung\footnote{Wehrung is referring to $(\omega_1,1)$-morasses and not to $\square_{\aleph_1}$, but the concept remains valid, as both morasses and squares can be viewed as incompactness principles.} \cite{MR2609217}: \fxnote*{}{indeed, $\square_{\aleph_1}$ is a paradigmatic incompactness principle at $\omega_2$, whereas strengthenings of $\mathsf{MA}(\aleph_1)$, like $\mathsf{PFA}$ and $\mathsf{MM}$, are known to enforce substantial compactness at $\omega_2$ and, in particular, imply the failure of $\square_{\aleph_1}$ \cite{MR776640,MR763902, MR2838054}.}

\printbibliography

\end{document}